\theoremstyle{definition}
\newtheorem{theorem}{Theorem}[section]
\newtheorem{lemma}[theorem]{Lemma}
\newtheorem{proposition}[theorem]{Proposition}
\newtheorem{corollary}[theorem]{Corollary}
\newtheorem{definition}[theorem]{Definition}
\newtheorem{example}[theorem]{Example}
\newtheorem{remark}[theorem]{Remark}
\newtheorem*{introTheorem}{Theorem}
\newcommand{\cR}{\mathcal{R}}
\newcommand{\cD}{\mathcal{D}}
\newcommand{\cL}{\mathcal{L}}
\newcommand{\cO}{\mathcal{O}}
\newcommand{\cA}{\mathcal{A}}
\newcommand{\cP}{\mathcal{P}}
\newcommand{\cV}{\mathcal{V}}
\newcommand{\cX}{\mathcal{X}}
\newcommand{\cC}{\mathcal{C}}
\newcommand{\bK}{\mathbb{K}}
\newcommand{\bC}{\mathbb{C}}
\newcommand{\bR}{\mathbb{R}}
\newcommand{\bQ}{\mathbb{Q}}
\newcommand{\bZ}{\mathbb{Z}}
\newcommand{\bN}{\mathbb{N}}
\newcommand{\bP}{\mathbb{P}}
\newcommand{\bA}{\mathbb{A}}
\newcommand{\bS}{\mathbb{S}}
\newcommand{\bF}{\mathbb{F}}
\newcommand{\Cstar}{\mathbb{C}^*}
\DeclareMathOperator{\divisor}{div}
\DeclareMathOperator{\cadiv}{CaDiv}
\DeclareMathOperator{\cl}{Cl}
\DeclareMathOperator{\wdiv}{Div}
\DeclareMathOperator{\pic}{Pic}
\DeclareMathOperator{\spec}{Spec}
\DeclareMathOperator{\Spec}{\textbf{Spec}}
\DeclareMathOperator{\proj}{Proj}
\DeclareMathOperator{\ord}{ord}
\DeclareMathOperator{\orb}{orb}
\DeclareMathOperator{\relint}{relint}
\DeclareMathOperator{\degree}{deg\,}
\DeclareMathOperator{\id}{id}
\DeclareMathOperator{\coeff}{coeff}
\DeclareMathOperator{\conv}{conv}
\DeclareMathOperator{\cone}{cone}
\DeclareMathOperator{\Hom}{Hom}
\DeclareMathOperator{\Pol}{Pol}
\DeclareMathOperator{\TV}{TV}
\DeclareMathOperator{\tail}{tail}
\DeclareMathOperator{\minvert}{min_{v \in \fan_P}}
\DeclareMathOperator{\Loc}{Loc}
\newcommand{\fan}{\mathcal{S}}
\DeclareMathOperator{\tdiv}{T-Div}
\DeclareMathOperator{\SF}{SF}
\DeclareMathOperator{\CaSF}{CaSF}
\DeclareMathOperator{\tcadiv}{T-CaDiv}
\newcommand{\hstar}{h^*}
\DeclareMathOperator{\vol}{vol}
\DeclareMathOperator{\OB}{\Delta_{Y_\bullet}}
\DeclareMathOperator{\psEff}{\ovl{Eff}}
\DeclareMathOperator{\BigC}{Big}
\newcommand{\xfix}{x_{\textnormal{fix}}}
\newcommand{\sigmafix}{\sigma_{\textnormal{fix}}}
\newcommand{\deltafix}{\delta_{\textnormal{fix}}}
\newcommand{\ldef}{\; := \;}
\newcommand{\lst}{\,|\;}
\newcommand{\lra}{\longrightarrow}
\newcommand{\ovl}{\overline}
\newcommand{\wt}{\widetilde}
\newcommand{\val}[1]{\nu_{Y_{\bullet}, #1}}
\newcommand{\posOrthant}[1]{#1_{\geq 0}}
\newcommand{\ul}{\underline}
\renewcommand{\iff}{\Leftrightarrow}
\newcommand{\bangle}[1]{\langle #1 \rangle}
\newcommand{\floor}[1]{\lfloor #1 \rfloor}
\newcommand{\cotangfannullvv}{%
\psset{xunit=0.35cm,yunit=.35cm}
\begin{pspicture}(-3.2,-3.2)(3.2,3.2)%

\pspolygon[linewidth=.001pt,linecolor=white,fillstyle=solid,fillcolor=gray1](0,3)(0,1)(2,3)%
\pspolygon[linewidth=.001pt,linecolor=white,fillstyle=solid,fillcolor=gray3](3,3)(2,3)(0,1)(0,0)(3,0)%
\pspolygon[linewidth=.001pt,linecolor=white,fillstyle=solid,fillcolor=gray5](3,0)(0,0)(0,-3)(3,-3)%
\pspolygon[linewidth=.001pt,linecolor=white,fillstyle=solid,fillcolor=gray4](0,-3)(0,0)(-3,-3)%
\pspolygon[linewidth=.001pt,linecolor=white,fillstyle=solid,fillcolor=gray2](-3,-3)(0,0)(0,1)(-3,1)%
\pspolygon[linewidth=.001pt,linecolor=white,fillstyle=solid,fillcolor=gray6](-3,1)(0,1)(0,3)(-3,3)%
\psset{linewidth=1pt}%

\psline{-}(0,3)(0,1)(2,3)%
\psline{-}(2,3)(0,1)(0,0)(3,0)%
\psline{-}(3,0)(0,0)(0,-3)%
\psline{-}(0,-3)(0,0)(-2,-2)%
\psline{-}(-3, -3)(0,0)(0,1)(-3,1)%
\psline{-}(-3, 1)(0,1)(0,3)%
\psgrid[gridwidth=0.3pt,griddots=5,subgriddiv=1,gridlabels=5pt](-3,-3)(3,3)
\end{pspicture}}
\newcommand{\cotangfaninftyvv}{%
\psset{xunit=0.35cm,yunit=.35cm}
\begin{pspicture}(-3.2,-3.2)(3.2,3.2)%
\pspolygon[linewidth=.001pt,linecolor=white,fillstyle=solid,fillcolor=gray1](0,3)(0,0)(3,3)%
\pspolygon[linewidth=.001pt,linecolor=white,fillstyle=solid,fillcolor=gray3](3,3)(0,0)(3,0)%
\pspolygon[linewidth=.001pt,linecolor=white,fillstyle=solid,fillcolor=gray5](3,0)(0,0)(-1,-1)(-1,-3)(3,-3)%
\pspolygon[linewidth=.001pt,linecolor=white,fillstyle=solid,fillcolor=gray4](-1,-3)(-1,-1)(-3,-3)%
\pspolygon[linewidth=.001pt,linecolor=white,fillstyle=solid,fillcolor=gray2](-3,-3)(-1,-1)(-3,-1)%
\pspolygon[linewidth=.001pt,linecolor=white,fillstyle=solid,fillcolor=gray6](-3,-1)(-1,-1)(0,0)(0,3)(-3,3)%

\psset{linewidth=1pt}%
\psline{-}(0,3)(0,0)(3,3)%
\psline{-}(3,3)(0,0)(3,0)%
\psline{-}(3,0)(0,0)(-1,-1)(-1,-3)%
\psline{-}(-1,-3)(-1,-1)(-2,-2)%
\psline{-}(-3, -3)(-1,-1)(-3,-1)%
\psline{-}(-3, -1)(-1,-1)(0,0)(0,3)%
\psgrid[gridwidth=0.3pt,griddots=5,subgriddiv=1,gridlabels=5pt](-3,-3)(3,3)
\end{pspicture}}
\newcommand{\cotangfanonevv}{%
\psset{xunit=0.35cm,yunit=.35cm}
\begin{pspicture}(-3.2,-3.2)(3.2,3.2)%
\pspolygon[linewidth=.001pt,linecolor=white,fillstyle=solid,fillcolor=gray1](0,3)(0,0)(1,0)(3,2)(3,3)%
\pspolygon[linewidth=.001pt,linecolor=white,fillstyle=solid,fillcolor=gray3](3,2)(1,0)(3,0)%
\pspolygon[linewidth=.001pt,linecolor=white,fillstyle=solid,fillcolor=gray5](3,0)(1,0)(1,-3)(3,-3)%
\pspolygon[linewidth=.001pt,linecolor=white,fillstyle=solid,fillcolor=gray4](1,-3)(1,0)(0,0)(-3,-3)%
\pspolygon[linewidth=.001pt,linecolor=white,fillstyle=solid,fillcolor=gray2](-3,-3)(0,0)(-3,0)%
\pspolygon[linewidth=.001pt,linecolor=white,fillstyle=solid,fillcolor=gray6](-3,0)(0,0)(0,3)(-3,3)%

\psset{linewidth=1pt}%
\psline{-}(0,3)(0,0)(1,0)(3,2)%
\psline{-}(3,2)(1,0)(3,0)%
\psline{-}(3,0)(1,0)(1,-3)%
\psline{-}(1,-3)(1,0)(0,0)(-3,-3)%
\psline{-}(-3,-3)(0,0)(-3,0)%
\psline{-}(-3,0)(0,0)(0,3)%
\psgrid[gridwidth=0.3pt,griddots=5,subgriddiv=1,gridlabels=5pt](-3,-3)(3,3)
\end{pspicture}}
\newcommand{\tailFan}{%
\psset{xunit=0.35cm,yunit=.35cm}
\begin{pspicture}(-3.2,-3.2)(3.2,3.2)%
\pspolygon[linewidth=.001pt,linecolor=white,fillstyle=solid,fillcolor=gray1](0,3)(0,0)(3,3)%
\pspolygon[linewidth=.001pt,linecolor=white,fillstyle=solid,fillcolor=gray3](3,3)(0,0)(3,0)%
\pspolygon[linewidth=.001pt,linecolor=white,fillstyle=solid,fillcolor=gray5](3,0)(0,0)(0,-3)(3,-3)%
\pspolygon[linewidth=.001pt,linecolor=white,fillstyle=solid,fillcolor=gray4](0,-3)(0,0)(-3,-3)%
\pspolygon[linewidth=.001pt,linecolor=white,fillstyle=solid,fillcolor=gray2](-3,-3)(0,0)(-3,0)%
\pspolygon[linewidth=.001pt,linecolor=white,fillstyle=solid,fillcolor=gray6](-3,0)(0,0)(0,3)(-3,3)%

\psset{linewidth=1pt}%
\psline{-}(0,3)(0,-3)%
\psline{-}(-3,0)(3,0)%
\psline{-}(-3,-3)(3,3)%
\psgrid[gridwidth=0.3pt,griddots=5,subgriddiv=1,gridlabels=5pt](-3,-3)(3,3)
\end{pspicture}}
\newcommand{\degreeFan}{%
\psset{xunit=0.35cm,yunit=.35cm}
\begin{pspicture}(-3.2,-3.2)(3.2,3.2)%
\pspolygon[linewidth=.001pt,linecolor=white,fillstyle=solid,fillcolor=gray1](0,3)(0,1)(1,1)(3,3)%
\pspolygon[linewidth=.001pt,linecolor=white,fillstyle=solid,fillcolor=gray3](3,3)(1,1)(1,0)(3,0)%
\pspolygon[linewidth=.001pt,linecolor=white,fillstyle=solid,fillcolor=gray5](3,0)(1,0)(0,-1)(0,-3)(3,-3)%
\pspolygon[linewidth=.001pt,linecolor=white,fillstyle=solid,fillcolor=gray4](0,-3)(0,-1)(-1,-1)(-3,-3)%
\pspolygon[linewidth=.001pt,linecolor=white,fillstyle=solid,fillcolor=gray2](-3,-3)(-1,-1)(-1,0)(-3,0)%
\pspolygon[linewidth=.001pt,linecolor=white,fillstyle=solid,fillcolor=gray6](-3,0)(-1,0)(0,1)(0,3)(-3,3)%
\pspolygon[linewidth=.001pt,linecolor=white,fillstyle=crosshatch,hatchsep=0.1,hatchangle=0](0,1)(1,1)(1,0)(0,-1)(-1,-1)(-1,0)%

\psset{linewidth=1pt}%
\psline{-}(0,1)(1,1)(1,0)(0,-1)(-1,-1)(-1,0)(0,1)%
\psline{-}(0,1)(0,3)
\psline{-}(1,1)(3,3)%
\psline{-}(1,0)(3,0)%
\psline{-}(0,-1)(0,-3)%
\psline{-}(-1,-1)(-3,-3)%
\psline{-}(-1,0)(-3,0)%
\psgrid[gridwidth=0.3pt,griddots=5,subgriddiv=1,gridlabels=5pt](-3,-3)(3,3)
\end{pspicture}}
\newcommand{\boxCotang}{%
\psset{xunit=.5cm,yunit=.4cm}
\begin{pspicture}(-3,-3)(3,3)%
\pspolygon[linewidth=1pt,fillstyle=solid,fillcolor=gray5](0,-2)(2,-2)(2,0)(0,2)(-2,2)(-2,0)
\psgrid[gridwidth=0.3pt,griddots=7,subgriddiv=1,gridlabels=5pt](0,0)(-3,-3)(3,3)
\end{pspicture}}
\newcommand{\TDhirzebruch}{%
\psset{unit=0.8cm}
\begin{pspicture}(0,-6)(12,0)
\psframe[linecolor=white](0.5,-4.5)(3.5,-1.5)

\psline{->}(2,-3)(4,-3)
\psline{->}(2,-3)(0,-1)
\psline{<->}(2,-5)(2,-1)
\psline[linewidth=0.5pt, linestyle=dotted]{-|}(0,-1.8)(0.8,-1.8)
\psline[linewidth=0.5pt,linestyle=dotted]{|-|}(0.8,-1.8)(2,-1.8)
\psline[linewidth=0.5pt, linestyle=dotted]{|-}(2,-1.8)(4,-1.8)
\psline[linewidth=0.5pt, linestyle=dotted]{-|}(0,-4.2)(2,-4.2)
\psline[linewidth=0.5pt, linestyle=dotted]{|-}(2,-4.2)(4,-4.2)

\qdisk(0.5,-1.5){1pt}
\rput[bl]{0}(2.7,-2.15){$\sigma_0$}
\rput[bl]{0}(1.4,-2.15){$\sigma_1$}
\rput[bl]{0}(0.8,-3.3){$\sigma_2$}
\rput[bl]{0}(2.7,-4){$\sigma_3$}
\rput[tl]{0}(3.8,-3.1){$\rho_0$}
\rput[bl]{0}(2.1,-1.5){$\rho_1$}
\rput[tr]{0}(0,-1){$\rho_2$}
\rput[br]{0}(1.9,-5){$\rho_3$}
\rput[tr]{0}(1.4,-1.2){\tiny{$(-1,n)$}}
\rput[bl]{0}(2,-6){$\bF_n$}

\psline{<-}(6,-3)(8,-3)
\psline{|->}(8,-3)(10,-3)
\rput[bl]{0}(10.5,-3){\textnormal{tailfan}}

\psline{<-|}(6,-1.8)(7.25,-1.8)
\psline{-|}(7.25,-1.8)(8,-1.8)
\psline{->}(8,-1.8)(10,-1.8)
\uput*[270](7.25,-1.8){{\tiny $-\frac{1}{n}$}}
\rput[bl]{0}(10.8,-1.8){$\fan_0$}
\rput[bl]{0}(9,-1.6){$\cD^{\sigma_0}$}
\rput[bl]{0}(6.4,-1.6){$\cD^{\sigma_2}$}
\rput[bl]{0}(7.3,-1.6){$\cD^{\sigma_1}$}

\psline{<-|}(6,-4.2)(8,-4.2)
\psline{->}(8,-4.2)(10,-4.2)
\rput[bl]{0}(10.8,-4.2){$\fan_{\infty}$}
\rput[bl]{0}(9,-4){$\cD^{\sigma_3}$}
\rput[bl]{0}(6.5,-4){$\cD^{\sigma_2}$}
\rput[bl]{0}(8,-6){$\fan$}

\psline{<->}(5,-5)(5,-1)
\qdisk(5,-1.8){1pt}
\qdisk(5,-4.2){1pt}
\qdisk(5,-3){1.5pt}
\rput[bl]{0}(4.5,-6){$Y=\bP^1$}

\end{pspicture}}
\newcommand{\hstarzero}{%
\psset{xunit=1cm,yunit=0.75cm}
\begin{pspicture}(0,-1.5)(3,1.5)%
\psgrid[gridwidth=0.3pt,griddots=5,subgriddiv=1,gridlabels=5pt](0,-1.5)(3,1.5)
\psline[linewidth=1.5pt]{-}(0,0)(1,0)
\psline[linewidth=1.5pt]{-}(1,0)(3,-1)
\end{pspicture}}
\newcommand{\hstarinfty}{%
\psset{xunit=1cm,yunit=0.75cm}
\begin{pspicture}(0,-1.5)(3,1.5)%
\psgrid[gridwidth=0.3pt,griddots=5,subgriddiv=1,gridlabels=5pt](0,-1.5)(3,1.5)
\psline[linewidth=1.5pt]{-}(0,1)(3,1)
\end{pspicture}}
\newcommand{\hirzebruchTwoPolytope}{%
\psset{xunit=.9cm,yunit=.75cm}
\begin{pspicture}(0,-1)(4,2)%
\pspolygon[linewidth=1.5pt,fillstyle=solid,fillcolor=gray5](0,0)(1,0)(3,1)(0,1)%
\psgrid[gridwidth=0.1pt,subgriddiv=1,griddots=5,gridlabels=5pt](0,-1)(4,2)%
\end{pspicture}}
\newcommand{\hirzebruchNPolytope}{%
\psset{xunit=1cm}
\begin{pspicture}(-1,-1)(4,2)%
\pspolygon[linewidth=1.5pt,fillstyle=solid,fillcolor=gray5](0,0)(1,0)(3,1)(0,1)%
\rput[tr]{0}(-.1,0){$(0,0)$}
\rput[br]{0}(-.1,1){$(0,1)$}
\rput[bl]{0}(3.1,1){$(n+1,1)$}
\rput[tl]{0}(1.1,0){$(1,0)$}
\end{pspicture}}
\newcommand{\hirzebruchNOB}{%
\psset{xunit=1cm}
\begin{pspicture}(-1,-1)(4,2)%
\pspolygon[linewidth=1.5pt,fillstyle=solid,fillcolor=gray5](0,0)(1,1)(3,0)%
\rput[tr]{0}(-.1,0){$(0,0)$}
\rput[br]{0}(0.9,1){$(1,1)$}
\rput[bl]{0}(3.1,0){$(n+2,0)$}
\end{pspicture}}
\newcommand{\hirzebruchHstarZero}{%
\psset{xunit=1cm,yunit=0.75cm}
\begin{pspicture}(0,-1)(4,2)%
\psgrid[gridwidth=0.3pt,griddots=5,subgriddiv=1,gridlabels=0pt](0,-1)(4,2)
\psline[linewidth=1.5pt]{-}(0,0)(1,1)
\psline[linewidth=1.5pt]{-}(1,1)(4,1)
\rput[br]{0}(-.2,0){{\tiny{$(0,0)$}}}
\rput[br]{0}(.8,1.1){{\tiny{$(1,1)$}}}
\rput[bl]{0}(4.1,1){{\tiny{$(n,1)$}}}

\end{pspicture}}
\newcommand{\hirzebruchHstarInfty}{%
\psset{xunit=1cm,yunit=0.75cm}
\begin{pspicture}(0,-1)(4,2)%
\psgrid[gridwidth=0.3pt,griddots=5,subgriddiv=1,gridlabels=0pt](0,-1)(4,2)
\psline[linewidth=1.5pt]{-}(0,0)(1,0)
\psline[linewidth=1.5pt]{-}(1,0)(4,-1)
\rput[br]{0}(-.2,0){{\tiny{$(0,0)$}}}
\rput[bl]{0}(1.1,0.1){{\tiny{$(1,0)$}}}
\rput[bl]{0}(4.1,-1){{\tiny{$(n,-1)$}}}
\end{pspicture}}
\newcommand{\hirzHstarZero}{%
\psset{xunit=1cm,yunit=0.75cm}
\begin{pspicture}(0,0)(4,3)%
\psgrid[gridwidth=0.3pt,griddots=5,subgriddiv=1,gridlabels=0pt](0,0)(4,3)
\psline[linewidth=1.5pt]{-}(0,1)(1,1)
\psline[linewidth=1.5pt]{-}(1,1)(4,0)
\rput[br]{0}(-.2,1){{\tiny{$(-1,0)$}}}
\rput[bl]{0}(1.1,1.1){{\tiny{$(0,0)$}}}
\rput[bl]{0}(4.1,0){{\tiny{$(n,-1)$}}}
\end{pspicture}}
\newcommand{\hirzHstarInfty}{%
\psset{xunit=1cm,yunit=0.75cm}
\begin{pspicture}(0,0)(4,3)%
\psgrid[gridwidth=0.3pt,griddots=5,subgriddiv=1,gridlabels=0pt](0,0)(4,3)
\psline[linewidth=1.5pt]{-}(0,2)(4,2)
\rput[br]{0}(-.2,2){{\tiny{$(-1,1)$}}}
\rput[bl]{0}(4.1,2){{\tiny{$(n,1)$}}}
\end{pspicture}}
\newcommand{\toricSurfaceSimpleFan}{%
\psset{xunit=.7cm,yunit=.5cm}%
\begin{pspicture}(-4,-3)(4,3)%

\psline{->}(-.5,2)(-3,2)%
\psline{|->}(-.5,2)(3,2)%
\uput*[270](-.5,1.8){\tiny{$-\frac{1}{2}$}}%
\rput[bl](3.5,1.8){$\fan_\infty$}%

\psline{<-}(-3,-2)(.5,-2)%
\psline{|->}(.5,-2)(3,-2)%
\uput*[270](.5,-2.2){\tiny{$\frac{1}{2}$}}%
\rput[bl](3.5,-2.2){$\fan_0$}%

\end{pspicture}}
\newcommand{\toricSurfaceSimplePlus}{%
\psset{xunit=.7cm,yunit=.5cm}%
\begin{pspicture}(-4,-3)(4,3)%

\psline{<-|}(-3,1)(0,1)%
\psline{->}(0,1)(3,1)%
\rput[bl](3.5,0.8){$\tail \fan$}%

\uput*[270](0,-.5){$\deg \fan = \emptyset$}%

\end{pspicture}}
\newcommand{\hirzebruchVarFan}{%
\psset{xunit=.7cm,yunit=.5cm}%
\begin{pspicture}(-4,-3)(4,3)%

\psline{->}(-.5,2)(-3,2)%
\psline{|-|}(-.5,2)(0,2)%
\psline{->}(0,2)(3,2)%
\uput*[270](-.7,1.8){\tiny{$-\frac{1}{n}$}}%
\uput*[270](0,1.8){\tiny{$0$}}
\rput[bl](3.5,1.8){$\fan_\infty$}%

\psline{<-}(-3,-2)(0,-2)%
\psline{|-|}(0,-2)(1,-2)%
\psline{->}(1,-2)(3,-2)%
\uput*[270](0,-2.2){\tiny{$0$}}%
\uput*[270](1,-2.2){\tiny{$1$}}%
\rput[bl](3.5,-2.2){$\fan_0$}%

\end{pspicture}}
\newcommand{\hirzebruchVarPlus}{%
\psset{xunit=.7cm,yunit=.5cm}%
\begin{pspicture}(-4,-3)(4,3)%

\psline{<-|}(-3,1)(0,1)%
\psline{->}(0,1)(3,1)%
\rput[bl](3.5,0.8){$\tail \fan$}%

\psline{<-|}(-3,-1)(-.5,-1)%
\psline[linestyle=dotted]{-}(-.5,-1)(1,-1)%
\psline{|->}(1,-1)(3,-1)%
\uput*[270](-.5,-1.2){\tiny{$-\frac{1}{n}$}}%
\uput*[270](1,-1.2){\tiny{$1$}}%
\rput[bl](3.5,-1.2){$\deg\fan$}%

\end{pspicture}}
\newcommand{\quadricFanOne}{%
\psset{xunit=0.4cm,yunit=.4cm}
\begin{pspicture}(-3.2,-3.2)(3.2,3.2)%
\pspolygon[linewidth=.001pt,linecolor=white,fillstyle=solid,fillcolor=gray1](2,2)(0,0)(2,-2)%
\pspolygon[linewidth=.001pt,linecolor=white,fillstyle=solid,fillcolor=gray3](2,2)(0,0)(-1,0)(-3,2)%
\pspolygon[linewidth=.001pt,linecolor=white,fillstyle=solid,fillcolor=gray5](-3,2)(-1,0)(-3,-2)%
\pspolygon[linewidth=.001pt,linecolor=white,fillstyle=solid,fillcolor=gray4](-3,-2)(-1,0)(0,0)(2,-2)%
\psset{linewidth=1pt}%
\psline{-}(2,2)(0,0)(-1,0)(-3,2)%
\psline{-}(2,-2)(0,0)(-1,0)(-3,-2)%
\psgrid[gridwidth=0.3pt,griddots=5,subgriddiv=1,gridlabels=5pt](-3,-2)(2,2)
\end{pspicture}}
\newcommand{\quadricFanZero}{%
\psset{xunit=0.4cm,yunit=.4cm}
\begin{pspicture}(-3.2,-3.2)(3.2,3.2)%
\pspolygon[linewidth=.001pt,linecolor=white,fillstyle=solid,fillcolor=gray1](2,2)(0,0)(0,-1)(2,-3)%
\pspolygon[linewidth=.001pt,linecolor=white,fillstyle=solid,fillcolor=gray3](2,2)(0,0)(-2,2)%
\pspolygon[linewidth=.001pt,linecolor=white,fillstyle=solid,fillcolor=gray5](-2,2)(0,0)(0,-1)(-2,-3)%
\pspolygon[linewidth=.001pt,linecolor=white,fillstyle=solid,fillcolor=gray4](-2,-3)(0,-1)(2,-3)%

\psset{linewidth=1pt}%
\psline{-}(2,2)(0,0)(0,-1)(2,-3)%
\psline{-}(-2,2)(0,0)(0,-1)(-2,-3)%
\psgrid[gridwidth=0.3pt,griddots=5,subgriddiv=1,gridlabels=5pt](-2,-3)(2,2)
\end{pspicture}}
\newcommand{\quadricFanInfty}{%
\psset{xunit=0.4cm,yunit=.4cm}
\begin{pspicture}(-3.2,-3.2)(3.2,3.2)%
\pspolygon[linewidth=.001pt,linecolor=white,fillstyle=solid,fillcolor=gray1](2,2)(.5,.5)(2,-1)%
\pspolygon[linewidth=.001pt,linecolor=white,fillstyle=solid,fillcolor=gray3](2,2)(.5,.5)(-1,2)%
\pspolygon[linewidth=.001pt,linecolor=white,fillstyle=solid,fillcolor=gray5](-1,2)(.5,.5)(-1,-1)%
\pspolygon[linewidth=.001pt,linecolor=white,fillstyle=solid,fillcolor=gray4](-1,-1)(.5,.5)(2,-1)%

\psset{linewidth=1pt}%
\psline{-}(2,2)(.5,.5)(2,-1)%
\psline{-}(-1,2)(.5,.5)(-1,-1)%
\psgrid[gridwidth=0.3pt,griddots=5,subgriddiv=1,gridlabels=5pt](-1,-1)(2,2)
\end{pspicture}}
\newcommand{\tailfanQuadric}{%
\psset{xunit=0.3cm,yunit=.3cm}
\begin{pspicture}(-3.2,-3.2)(3.2,3.2)%
\pspolygon[linewidth=.001pt,linecolor=white,fillstyle=solid,fillcolor=gray1](3,3)(0,0)(3,-3)%
\pspolygon[linewidth=.001pt,linecolor=white,fillstyle=solid,fillcolor=gray3](3,3)(0,0)(-3,3)%
\pspolygon[linewidth=.001pt,linecolor=white,fillstyle=solid,fillcolor=gray5](-3,3)(0,0)(-3,-3)%
\pspolygon[linewidth=.001pt,linecolor=white,fillstyle=solid,fillcolor=gray4](-3,-3)(0,0)(3,-3)%

\psset{linewidth=1pt}%
\psline{-}(3,3)(-3,-3)%
\psline{-}(3,-3)(-3,3)%
\psgrid[gridwidth=0.3pt,griddots=5,subgriddiv=1,gridlabels=5pt](-3,-3)(3,3)
\end{pspicture}}
\newcommand{\degreeQuadric}{%
\psset{xunit=0.3cm,yunit=.3cm}
\begin{pspicture}(-3.2,-3.2)(3.2,3.2)%
\pspolygon[linewidth=.001pt,linecolor=white,fillstyle=solid,fillcolor=gray1](3,3)(.5,.5)(.5,-.5)(3,-3)%
\pspolygon[linewidth=.001pt,linecolor=white,fillstyle=solid,fillcolor=gray3](3,3)(.5,.5)(-.5,.5)(-3,3)%
\pspolygon[linewidth=.001pt,linecolor=white,fillstyle=solid,fillcolor=gray5](-3,3)(-.5,.5)(-.5,-.5)(-3,-3)%
\pspolygon[linewidth=.001pt,linecolor=white,fillstyle=solid,fillcolor=gray4](-3,-3)(-.5,-.5)(.5,-.5)(3,-3)%
\pspolygon[linewidth=.001pt,linecolor=white,fillstyle=crosshatch,hatchsep=0.1,hatchangle=0](.5,.5)(-.5,.5)(-.5,-.5)(.5,-.5)%

\psset{linewidth=1pt}%
\psline{-}(3,3)(.5,.5)(-.5,.5)(-3,3)%
\psline{-}(-3,-3)(-.5,-.5)(.5,-.5)(3,-3)%
\psline{-}(-.5,-.5)(-.5,.5)%
\psline{-}(.5,.5)(.5,-.5)%
\psgrid[gridwidth=0.3pt,griddots=5,subgriddiv=1,gridlabels=5pt](-3,-3)(3,3)
\end{pspicture}}
\newcommand{\boxQuadric}{%
\psset{xunit=.5cm,yunit=.4cm}
\begin{pspicture}(-3,-3)(3,3)%
\pspolygon[linewidth=1pt,fillstyle=solid,fillcolor=gray5](0,3)(-3,0)(0,-3)(3,0)%
\psgrid[gridwidth=0.3pt,griddots=7,subgriddiv=1,gridlabels=5pt](0,0)(-3,-3)(3,3)%
\end{pspicture}}
\begin{document}

\title{Okounkov Bodies of Complexity-One $T$-Varieties}

\author[L.~Petersen]{Lars Petersen}
\address{Institut f\"ur Mathematik und Informatik,
         Freie Universit\"at Berlin
	 Arnimallee 3,
	 14195 Berlin, Germany}
\email{petersen@math.fu-berlin.de}

\keywords{Okounkov body, $T$-variety, toric variety, toric degeneration}

\begin{abstract}
We compute Okounkov bodies of projective complexity-one $T$-varie\-ties with respect to two types of invariant flags.
In particular, we show that the latter are rational polytopes. Moreover, using results of Dave Anderson and Nathan Ilten,
we briefly exhibit explicit links to degenerations and $T$-deformations. Finally, we prove that the global Okounkov body of a
rational projective complexity-one $T$-variety is rational polyhedral.
\end{abstract}

\maketitle

\section{Introduction}
\label{sec:introduction}

Toric varieties and projectivized rank two toric vector bundles over toric varieties are special instances of rational complexity-one
$T$-varieties. Fixing a torus invariant admissible flag, Robert Lazarsfeld and Mircea Musta\c{t}\u{a} not only provided a description
of Okounkov bodies of smooth projective toric varieties but also gave a presentation of the global Okounkov body,
cf.\ \cite[Section 6]{okounkovBody}. Jos\'{e} Gonz\'{a}lez showed that the global Okounkov body with respect to a suitable torus
invariant flag for projectivized rank two toric vector bundles over smooth projective toric varieties is a rational polyhedral cone.
In particular, he explicitly described the supporting hyperplanes of this cone in terms of Klyachko's filtration data of the bundle,
cf.\ \cite[Theorem 5.2]{obToricVB}.

The aim of this article is to extend these results to the class of rational projective complexity-one $T$-varieties. In order
to do so, we make use of their description in terms of so-called divisorial fans $\fan$ (see (\ref{subsec:complOne})).

Let $T = N \otimes \Cstar$ denote an algebraic torus that acts effectively on the rational projective complexity-one $T$-variety
$\TV(\fan)$. Depending on the local structure of $\TV(\fan)$ around a $T$-fixed point $\xfix$, we use the quotient representation of
$\TV(\fan)$ over the curve $C$ to construct two types (general and toric) of $T$-invariant admissible flags in $\TV(\fan)$. Furthermore,
given a divisorial support function $h$ and denoting by $D_h$ the corresponding $T$-invariant divisor (see (\ref{subsec:CD})), 
we can put Theorem \ref{thm:localOBgeneral}, Proposition \ref{prop:localOBtoricATwo} and Proposition \ref{prop:localOBtoricBTwo}
under one roof to obtain the following result:

\begin{introTheorem}
The Okounkov body $\OB(D_h)$ of a $T$-invariant big divisor $D_h$ on a rational projective complexity-one $T$-variety
$\TV(\fan)$ with respect to a general or toric flag $Y_\bullet$ is a rational polytope.
\end{introTheorem}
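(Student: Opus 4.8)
The plan is to combine the three local computations referenced above into the single global statement, the point being that each of them produces a rational polytope for one and the same structural reason: the weight grading of sections paired with the rationality of the base curve. First I would set $n := \rank N$, so that $\dim \TV(\fan) = n+1$ and the admissible flag $Y_\bullet$ yields a valuation $\nu_{Y_\bullet}$ with values in $\bZ^{n+1}$. Since $\TV(\fan)$ is rational of complexity one, its Chow quotient is a rational curve $C \cong \bP^1$, and the space of sections of every multiple of $D_h$ carries the weight grading
\[
H^0\big(\TV(\fan), \cO(mD_h)\big) \;=\; \bigoplus_{u \in M} H^0\big(C, \cO_C(D^*_{mh}(u))\big),
\]
where $M = \Hom(N,\bZ)$ and $D^*_{mh}(u)$ is the $\bQ$-divisor on $C$ obtained by evaluating the support function $mh$ against the weight $u$ on each slice $\fan_P$. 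The weights contributing a nonzero summand are exactly the lattice points of $m\Delta$, where $\Delta \subset M_\bR$ is the rational polytope cut out by $\degree D^*_h(u) \ge 0$; here $\degree D^*_h(u)$ is a finite sum, over the points $P$ with nontrivial slice $\fan_P$, of minima of finitely many affine-linear forms in $u$, hence a concave, piecewise-linear, rational function of $u$.

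Next I would show that the chosen flag decouples the torus weight from the curve direction. The reason the flag is taken $T$-invariant and adapted to the local structure at $\xfix$ — general in one case, toric in the others — is precisely that $\nu_{Y_\bullet}$ restricted to the weight-$u$ summand returns a fixed affine-linear vector $\ell(u) \in \bZ^{n}$ in the first $n$ coordinates (the toric moment-map mechanism, which reads off $u$), while the last coordinate records the order of vanishing, at a marked point $P_0 \in C$, of the corresponding section of $\cO_C(D^*_{mh}(u))$. Because $C \cong \bP^1$, a line bundle of degree $d \ge 0$ admits, for every $0 \le k \le d$, a section vanishing to order exactly $k$ at $P_0$; hence the image of the weight-$u$ sections under $\nu_{Y_\bullet}$ is exactly $\{\ell(u)\} \times \{0, 1, \dots, \floor{\degree D^*_{mh}(u)}\}$. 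Establishing this splitting for each admissible local type at $\xfix$ is precisely the content of Theorem \ref{thm:localOBgeneral} and Propositions \ref{prop:localOBtoricATwo} and \ref{prop:localOBtoricBTwo}.

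Finally I would assemble the pieces and pass to the limit. Collecting over all weights, $\nu_{Y_\bullet}\big(H^0(mD_h)\setminus\{0\}\big)$ consists of the points $(\ell(u), t)$ with $u \in m\Delta \cap M$ and $0 \le t \le \floor{\degree D^*_{mh}(u)}$. Rescaling by $1/m$, using the homogeneity relation $\degree D^*_{mh}(u) = m\,\degree D^*_h(u/m)$ together with the exact formula $h^0(\bP^1,\cO(d)) = d+1$, the floors wash out in the limit and the closed convex hull becomes
\[
\OB(D_h) \;=\; \big\{\, (\ell(u), t) \lst u \in \Delta,\ 0 \le t \le \degree D^*_h(u) \,\big\}.
\]
This is the region under the graph of the concave rational piecewise-linear function $\degree D^*_h(\cdot)$ over the rational polytope $\Delta$, transported by the injective affine-linear map $u \mapsto \ell(u)$; such a region is a rational polytope, which is the assertion.

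The main obstacle is the second step. For each admissible local type at $\xfix$ one must verify both that the flag genuinely separates the weight coordinates from the curve coordinate and that the curve coordinate realizes the full range of vanishing orders $0,\dots,\floor{\degree D^*_{mh}(u)}$. This is exactly where the rationality of $C$ and the case distinction (general versus toric, and the two local types underlying Propositions \ref{prop:localOBtoricATwo} and \ref{prop:localOBtoricBTwo}) enter, and it is the genuine technical content; once the three local descriptions are in hand, the global statement follows by merely observing that each is a rational polytope.
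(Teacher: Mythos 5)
Your overall route --- decompose sections by weight, realize vanishing orders of rational functions on $\bP^1$, let the round-downs wash out asymptotically, and delegate the flag-by-flag verification to Theorem \ref{thm:localOBgeneral} and Propositions \ref{prop:localOBtoricATwo}, \ref{prop:localOBtoricBTwo} --- is exactly the paper's, and for a \emph{general} flag your sketch essentially reproduces the proof of Theorem \ref{thm:localOBgeneral} (up to the cosmetic difference that the paper puts the curve coordinate first rather than last). The genuine problem is your second step: the claimed decoupling, and with it your closing formula, is false for the toric flags $\mathbf{T_1}$ and $\mathbf{T_2}$. Two things go wrong there. First, the flag point lies over a point $P \in \cP$ with nontrivial slice, so a weight-$u$ section $f\chi^u$ is a section of $\cO(\floor{h^*(u)})$ that may have a \emph{pole} at $P$: the constraint is $\ord_P f \geq -\floor{\hstar_P(u)}$, and the realized vanishing orders form (asymptotically) the interval $[-\hstar_P(u),\, \deg \hstar(u) - \hstar_P(u)]$ rather than $[0,\deg\hstar(u)]$; the shift $\hstar_P(u)$ is piecewise linear and in general not affine in $u$. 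Second, the valuation does not isolate $\ord_P f$ in a single coordinate: by Lemmas \ref{lem:valToricFlagATwo} and \ref{lem:valToricFlagBTwo} its components are the pairings $\langle [\ord_P f, u], n_i\rangle$ with the ray generators $n_i$ of $\deltafix \subset \bQ \times N_\bQ$, and since these rays come from vertices of $\cD^{\sigmafix}_P$ in height $\pm 1$, the curve direction enters every component; there is no coordinate that ``is'' the vanishing order.

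Consequently the correct description in the toric cases is the one of Propositions \ref{prop:localOBtoricATwo} and \ref{prop:localOBtoricBTwo}: $\OB(D_h)$ is the image, under the unimodular map determined by the $n_i$, of $W(h) = \{(x,u) \lst u \in \Box_{\ul{h}},\; 0 \leq x + \hstar_P(u) \leq \deg\hstar(u)\}$. This differs from your region under the graph of $\deg\hstar$ by the shear $(x,u)\mapsto (x+\hstar_P(u),u)$, which is piecewise linear but \emph{not} affine whenever $\hstar_P$ has breaks; the shear creates vertices of $W(h)$ over any interior break point of $\hstar_P$ at which $\deg\hstar$ stays affine, so the two bodies are in general not even affinely equivalent (schematically: with $\hstar_P(u)=\min(0,-u)$, $\hstar_{P'}(u)=\min(0,u)$, $\hstar_{P''}\equiv 1$ on $\Box_{\ul{h}}=[-1,1]$, your region is a triangle while $W(h)$ is a parallelogram). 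The conclusion of the theorem survives, both because $W(h)$ is itself visibly a rational polytope --- its defining constraints $x\geq -\hstar_P(u)$ and $x \leq \sum_{P'\neq P}\hstar_{P'}(u)$ come from rational piecewise linear concave functions --- and because you delegate the technical content to the propositions anyway; but the uniform product formula you propose is not what those propositions prove, and your intermediate claim that the image of the weight-$u$ sections equals $\{\ell(u)\}\times\{0,\dots,\floor{\deg D^*_{mh}(u)}\}$ would fail if the sketch were carried out for toric flags.
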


\noindent
More specifically, $\OB(D_h)$ can be described in terms of the piecewise affine linear function $\hstar$ over the polytope $\Box_h$
both of which are closely related to $h$. Moreover, using a toric downgrade, we obtain new results for Okounkov bodies of toric
varieties previously out of reach with purely toric methods (see Examples \ref{ex:hirzebruchDegenerationOB}, \ref{ex:nonSmoothTV}).

Having a precise description of the Okounkov bodies at hand, we proceed by investigating their relations to degenerations as constructed
by Dave Anderson (see \cite{obToricDegenerations}) and so-called one-parameter $T$-deformations which were introduced by Nathan Ilten
(see \cite{phdNathan}). Furthermore we give two examples for which we construct $T$-deformations via a decomposition of the respective Okounkov
body (see (\ref{subsec:ilten})).

Finally, we study the global Okounkov body of a rational projective complexity-one $T$-variety $\TV(\fan)$. Using the fact that this cone
can be constructed via the graph of a continuous piecewise affine linear function over some simpler cone, we obtain the following result:

\begin{introTheorem}
The global Okounkov body $\OB(\TV(\fan))$ of a rational projective com\-plexity-one $T$-variety $\TV(\fan)$ with respect to a general or
toric flag $Y_\bullet$ is a rational polyhedral cone.
\end{introTheorem}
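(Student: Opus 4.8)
The plan is to realize $\OB(\TV(\fan))$ as the region cut out between a rational polyhedral ``base'' cone and the graph of a continuous, rational, piecewise affine linear function over it, and then to invoke the elementary fact that such a region is again rational polyhedral. Recall that the global Okounkov body sits inside $\bR^n \times N^1(\TV(\fan))_\bR$ and that its fiber over a big class $\xi$ is the individual body $\OB(\xi)$. Since $\TV(\fan)$ is a $T$-variety, its divisor class group is generated by classes of $T$-invariant divisors, so every big class is represented by some $D_h$; consequently $\OB(\TV(\fan))$ is the closure of the union of the slices $\OB(D_h) \times \{[D_h]\}$ as $h$ ranges over the divisorial support functions for which $D_h$ is big. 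The assignment $h \mapsto [D_h] \in N^1(\TV(\fan))_\bR$ is linear, and the support functions with $D_h$ big form an open subcone of a finite-dimensional rational vector space; this is where I would fix the bookkeeping.

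First I would recall from Theorem \ref{thm:localOBgeneral} (respectively Propositions \ref{prop:localOBtoricATwo} and \ref{prop:localOBtoricBTwo}) that $\OB(D_h)$ is precisely the region in $M_\bR \times \bR \cong \bR^{n-1} \times \bR$ enclosed between the polytope $\Box_h \subset M_\bR$ and the graph of the continuous piecewise affine linear function $\hstar$. The two features I would extract and make precise are: (i) there is a finite decomposition of the big cone $\{h : D_h \text{ big}\}$ into rational polyhedral chambers on each of which $\Box_h$ has constant normal fan, its facets being cut out by inequalities that are jointly (rationally) affine linear in the pair $(u,h)$; and (ii) on each such chamber the breakpoints and slopes of $\hstar$ likewise depend affine linearly on $h$ with rational coefficients. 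Both features are visible in the local computations, where $\Box_h$ and $\hstar$ are assembled out of the combinatorial data of $\fan$ together with the values of $h$ on the rays and on the curve $C$.

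Granting (i) and (ii), I would assemble the base cone
\[
\cC \;=\; \bigl\{\, (u,[D_h]) \;:\; D_h \text{ big},\ u \in \Box_h \,\bigr\} \;\subset\; M_\bR \times N^1(\TV(\fan))_\bR,
\]
which is rational polyhedral because membership $u \in \Box_h$ is governed, chamber by chamber, by finitely many inequalities that are jointly conical in $(u,h)$, and the finitely many chambers patch along their common rational walls. The function $\hstar$ then extends to a function on $\cC$ that is jointly continuous and piecewise affine linear in $(u,h)$ with rational slopes, and $\OB(\TV(\fan))$ is exactly the region between the zero section of $\cC$ and the graph of this function. The region between the zero section and the graph of a continuous, rational, piecewise affine linear function over a rational polyhedral cone is again rational polyhedral, and homogeneity of the Okounkov construction, $\OB(mD_h) = m\cdot\OB(D_h)$, guarantees that the resulting set is a cone; this yields the claim.

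The main obstacle I anticipate is verifying (i) and (ii) globally rather than chamber by chamber: establishing that the chamber decomposition of the big cone is genuinely finite and rational, and, more delicately, that $\hstar$ is continuous as $h$ crosses a wall along which the combinatorial type of $\Box_h$ degenerates. Continuity across walls is precisely what guarantees that the graph glues into a single polyhedral cone rather than a disconnected union of polyhedral pieces; I would check it by matching the limiting values of $\hstar$ from the two adjacent chambers on their common wall, using that both are built from the same underlying support function $h$. The finiteness and rationality of the decomposition is the complexity-one analogue of the Mori chamber decomposition and should follow from the polyhedral structure of $\fan$ together with the explicit description of the individual bodies already established.
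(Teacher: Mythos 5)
Your proposal has the same skeleton as the paper's proof: both realize $\OB(\TV(\fan))$ as the region between a rational polyhedral base cone (swept out over $\psEff(\TV(\fan))$ by the weight polytopes $\Box_{\ul{h}}$, resp.\ the polytopes $W(h)$) and the graph of the concave function $(u,\xi) \mapsto \deg \xi^*(u)$. The divergence, and the gap, lies at the decisive step. You reduce everything to your claims (i) and (ii) --- a finite rational chamber decomposition of the big cone over which $\Box_h$ and $\hstar$ vary jointly piecewise affine linearly, plus continuity of $\hstar$ across walls --- and then you explicitly name these claims as ``the main obstacle'' without proving them; ``should follow from the polyhedral structure of $\fan$'' is not an argument, and in your formulation the wall-crossing continuity genuinely would require a separate verification. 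The paper's route is designed precisely to avoid constructing any chamber decomposition: it proves only that $\deg\xi^*$ is concave, homogeneous, and piecewise affine with finitely many breaks along every compact line segment (an easy check, since each $\hstar_P$ is a minimum over the finite vertex set $\fan_P(0)$), and then invokes Lemma \ref{lem:fiberedPolytopes} --- a nontrivial convex-geometry lemma, proved by a compactness argument, asserting that a concave function on a polytope whose restriction to every segment has polytopal subgraph itself has polytopal subgraph. That lemma is the paper's substitute for your (i)/(ii).

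The good news is that your gap is fillable, and more cheaply than you fear. Every $h \in \CaSF(\fan)_\bQ$ is piecewise affine on the \emph{same} fixed subdivisions $\fan_P$, so for each of the finitely many vertices $v \in \fan_P(0)$ the evaluation $h \mapsto h_P(v)$ is a rational linear functional. Hence $\hstar_P(u) = \min_{v \in \fan_P(0)} \big( \langle u,v\rangle - h_P(v) \big)$ is a minimum of finitely many functions that are \emph{jointly} linear in $(u,h)$, and $\deg h^*(u) = \sum_{P} \hstar_P(u)$ is again such a minimum, since a finite sum of minima of linear functions equals the minimum, over all choice tuples of vertices, of sums of linear functions. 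A minimum of finitely many rational linear functions is automatically continuous, concave, and piecewise linear with respect to a finite rational fan --- so your continuity-across-walls worry evaporates --- and the condition $0 \leq x \leq \deg h^*(u)$ becomes finitely many rational linear inequalities; likewise $u \in \Box_{\ul{h}}$ becomes $\langle u, n_\rho \rangle \geq \ul{h}(n_\rho)$ for the finitely many rays $\rho$ of $\tail\fan$. This exhibits the relevant region in $(x,u,h)$-coordinates as a rational polyhedral cone, whose image under the rational linear map $(x,u,h) \mapsto (x,\phi(u),[D_h])$ is then rational polyhedral. One bookkeeping point you elide: for a general flag the normalized representative of a class is \emph{not} literally unique (one may add $\SF(\divisor(f))$ for $f$ regular and non-vanishing at $Q$), but this is harmless, since $\ul{h}$ is unchanged and $\deg h^*$ is insensitive to adding $\SF$ of a principal divisor, principal divisors on a curve having degree zero. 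With these points supplied your argument closes, and it in fact bypasses the paper's Lemma \ref{lem:fiberedPolytopes} entirely.
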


The present paper is organized as follows. Section \ref{sec:polDiv} recalls the necessary ingredients from the theory of complexity-one
$T$-varieties: polyhedral divisors, marked fansy divisors, divisorial polytopes, and the description of torus invariant divisors in terms
of divisorial support functions.

Section \ref{sec:OB} forms the heart of this article. After recalling the general construction of Okounkov bodies and their description
in the toric setting, we construct two types of admissible invariant flags for complexity-one $T$-varieties and compute Okounkov bodies
with respect to these flags in terms of divisorial support functions. In particular, we derive that these convex bodies
are rational polytopes.

Section \ref{sec:ToricDeg} reviews the relation of Okounkov bodies to deformations and degenerations, as presented by Nathan Ilten
\cite{phdNathan} and Dave Anderson \cite{obToricDegenerations}. Several examples are used to exhibit this interplay and to illustrate
the combinatorial flavor which can be associated to the respective geometric operation.

Section \ref{sec:GlobalOB} is concerned with global Okounkov bodies of rational projective complexi\-ty-one $T$-varieties and provides
a proof that the latter are rational polyhedral.\\[-1ex]

\noindent
\emph{Acknowledgements.} I would like to thank Klaus Altmann, Christian Haase, Andreas Hochenegger, Nathan Ilten, Priska Jahnke,
Hendrik S\"u\ss{} and Jarek Wi\'{s}niewski for valuable comments and fruitful discussions.

\section{Preliminaries}
\label{sec:polDiv}

This section fixes some notation and introduces the language of polyhedral divisors and divisorial fans with a special focus upon
complexity-one $T$-varieties (see \cite{tvar1,tvar2} and in particular \cite{tvarReview} for more details). Moreover, we review the
crucial notions and main results of \cite{tidiv} and conclude with several examples which will also reappear in later sections.

For the rest of the paper we adopt the following conventions and notation. If not stated otherwise
\begin{itemize}
\item[--] $\bK$ denotes an algebraically closed field of characteristic zero.
\item[--] a \emph{variety} means an integral, separated scheme of finite type over $\bK$.
\item[--] $N$ denotes a lattice, i.e.\ a free abelian group of finite rank. Its dual $\Hom_\bZ(N,\bZ)$ is usually denoted by $M$.
Given a lattice $L$,  we set $L_\bQ := L \otimes_\bZ \bQ$ and $L_\bR := L \otimes_\bZ \bR$.
\item[--] a \emph{cone} is supposed to be pointed and polyhedral.
\item[--] we call a real-valued function $f: K \to \bR$ defined over some convex subset $K \subset \bR^k$ \emph{concave} if
$f(tx_1 + (1-t)x_2) \geq tf(x_1) + (1-t)f(x_2)$ for all $x_1, x_2 \in K$ and $0 \leq t \leq 1$.
\end{itemize}

\subsection{Polyhedral Divisors and Divisorial Fans}
\label{subsec:generalTheory}

\begin{definition}
A \emph{$T$-variety} is a normal variety $X$ together with an effective algebraic torus action $T \times X \to X$. Its
\emph{complexity} is defined as the codimension of a generic $T$-orbit. 
\end{definition}

The most prominent and best understood $T$-varieties are those of complexity zero. They can be described via the combinatorial language
of \emph{polyhedral fans} and are much better known under the name of \emph{toric varieties}.

Let $T$ be a $k$-dimensional affine torus ($\cong (\Cstar)^k$), and let $M$, $N$ denote the mutually dual, free abelian groups
($\cong \bZ^k$) of characters and one-parameter subgroups, respectively. In particular, $T$ can be recovered as
$T= \spec \bC[M] = N \otimes_\bZ \bC^*$. For a polyhedral cone $\sigma \subseteq N_\bQ$ we may consider the semigroup (with respect
to Minkowski addition)
\[\Pol^+_\bQ(N,\sigma) := \{\Delta\subseteq N_\bQ \lst \Delta = \mbox{polyhedron with} \, \tail \Delta = \sigma\}
\subseteq \Pol_\bQ(N,\sigma)\]
where $\tail \Delta := \{a \in N_\bQ \lst \Delta + a \subseteq \Delta\}$ denotes the \emph{tailcone} of $\Delta$ and
$\Pol \supseteq \Pol^+$ is the associated Grothendieck group.

On the other hand, let $Y$ be a normal and semiprojective variety, i.e.\ $Y\to Y_0$ is projective over an affine $Y_0$.
By $\cadiv(Y)$ we denote the group of Cartier divisors on $Y$. A $\bQ$-Cartier divisor on $Y$ is called \emph{semiample} if it has
a positive, base point free multiple. For an element 
\[\cD = \sum_i \cD_i \otimes D_i \in \Pol_\bQ (N,\sigma) \otimes_\bZ \cadiv(Y)\]
with $\cD_i \in \Pol^+(N_\bQ,\sigma)$ and effective divisors $D_i$, we may consider its evaluations
\[\cD(u) := \sum_i \min \langle \cD_i,u \rangle \, D_i \in \cadiv_\bQ(Y)\]
on elements $u \in \sigma^\vee$. Since we always want to assume that $Y$ is projective, we will explicitly allow $\emptyset \in
\Pol^+_\bQ(N,\tail \cD)$ as polyhedral coefficients of $\cD$. Then, $\cD = \sum_i \cD_i \otimes D_i$ should be interpreted as
$\sum_{\cD_i \neq \emptyset} \cD_i \otimes D_i|_{\Loc \cD}$ with $\Loc \cD := Y \setminus \bigcup_{\cD_i = \emptyset} D_i$.

We call $\cD$ a \emph{p-divisor} if the $\cD(u)$ are semiample and, moreover, big for $u \in \relint \sigma^\vee$. The common
tailcone $\sigma$ of the coefficients $\cD_i$ will be denoted by $\tail \cD$. The positivity assumptions imply that
$\cD(u)+\cD(u') \leq \cD(u+u')$, so that $\cA(\cD) := \bigoplus_{u \in \sigma^\vee \cap M} \cO_{\Loc \cD}(\cD(u))$ becomes a sheaf of
rings. We then define the following two $T$-varieties
\[\wt{\TV}(\cD):= \Spec_{\Loc \cD} \cA(\cD) \quad \textnormal{and} \quad \TV(\cD) := \spec \Gamma(\Loc \cD,\cA(\cD))\,.\]
Note that this construction comes with a natural \emph{contraction} map $r: \wt{\TV}(\cD) \to \TV(\cD)$ which identifies certain
$T$-orbits in $\wt{\TV}(\cD)$ and, furthermore, a \emph{quotient} map $\pi: \wt{\TV}(\cD) \to \Loc \cD$.

$\TV(\cD)$ does not change if $\cD$ is pulled back via a birational modification $Y'\to Y$ or if $\cD$ is modified by a
polyhedral \emph{principal} divisor on $Y$ where the latter denotes an element in the image of the natural map
$N \otimes_\bZ \bK(Y)^* \to \Pol_\bQ(N,\sigma) \otimes_\bZ \cadiv(Y)$. Two p-divisors that differ by chains of the upper operations
are called equivalent. 

\begin{theorem}
\label{thm:affCorr}
Cf.\ \cite[Theorems 3.1 and 3.4; Corollary 8.12]{tvar1}. The map $\cD \mapsto \TV(\cD)$ yields a bijection between equivalence classes
of p-divisors and normal, affine varieties with an effective torus action.
\end{theorem}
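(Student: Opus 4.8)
The plan is to establish the asserted bijection in three movements: well-definedness of the assignment $\cD \mapsto \TV(\cD)$, surjectivity, and injectivity on equivalence classes. Since the excerpt already records that pullback along a birational modification $Y' \to Y$ and modification by a polyhedral principal divisor leave $\TV(\cD)$ unchanged, the substance of injectivity is the converse implication, which I would extract from the reconstruction procedure used for surjectivity.

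First I would verify that $X \ldef \TV(\cD) = \spec A$ with $A = \Gamma(\Loc\cD,\cA(\cD)) = \bigoplus_{u \in \sigmav \cap M} \Gamma(\Loc\cD, \cO(\cD(u)))$ is a normal affine variety carrying an effective $T$-action. The $M$-grading furnishes the $T$-action, and effectivity follows because bigness of $\cD(u)$ for $u \in \relint\sigmav$ forces the weight monoid to span $M_\bQ$. Superadditivity $\cD(u) + \cD(u') \leq \cD(u+u')$ makes the multiplication of sections well-defined and realizes $A$ as an $M$-graded subalgebra of the Laurent algebra $\bK(Y)[M]$; in particular $A$ is a domain. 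Normality is inherited from reflexivity of the sheaves $\cO(\cD(u))$ on the normal variety $\Loc\cD$, while finite generation of $A$ over $\bK$ is a consequence of semiampleness of the $\cD(u)$ together with semiprojectivity of $Y$.

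The heart of the argument is surjectivity. Starting from a normal affine variety $X = \spec A$ with effective action of $T = \spec\bK[M]$, the coordinate ring decomposes as $A = \bigoplus_{u \in \omega \cap M} A_u$ over a full-dimensional weight cone $\omega$ (full-dimensionality being equivalent to effectivity); set $\sigma \ldef \omega^\vee$, a pointed cone. I would pass to the field $K_0 \subseteq \Quot(A)$ of $T$-invariant rational functions and choose a normal semiprojective variety $Y$ with $\bK(Y) = K_0$ over which every homogeneous component is realized as the space of global sections of a line bundle, say $A_u = \Gamma(Y, \cO(\cD(u)))$ for a divisor $\cD(u)$ on $Y$; superadditivity of the algebra translates into superadditivity of these divisors. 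The key combinatorial observation is that, for each prime divisor $P \subseteq Y$, the assignment $u \mapsto \coeff_P \cD(u)$ is a concave, positively homogeneous, piecewise linear function on $\omega$, hence the support function of a polyhedron $\cD_P$ with $\tail \cD_P = \sigma$. Assembling these yields a polyhedral divisor $\cD = \sum_P \cD_P \otimes P$ with $\TV(\cD) \cong X$, and the semiample-and-big, i.e.\ p-divisor, property of $\cD$ is extracted from the finite generation and dimension of $A$.

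The main obstacle is precisely this reconstruction of $Y$ together with the proof that the resulting $\cD$ is a genuine p-divisor. The model $Y$ is far from canonical — any two admissible choices differ by a birational modification, and the realizations $A_u = \Gamma(Y,\cO(\cD(u)))$ are only well-defined up to linear equivalence — which is exactly why the target of the bijection must be \emph{equivalence classes} of p-divisors rather than p-divisors themselves. Showing that two such choices are always linked by the two permitted operations is what simultaneously closes the surjectivity argument and supplies injectivity. The most technical point is verifying the semiampleness and bigness conditions, which encode that $X$ is affine of the correct complexity; here one must carefully relate global sections of the $\cO(\cD(u))$ on $Y$ back to the finitely generated algebra $A$.
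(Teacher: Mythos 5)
A point of order first: the paper does not prove this statement at all. Theorem \ref{thm:affCorr} is imported from Altmann--Hausen \cite{tvar1} (Theorems 3.1, 3.4 and Corollary 8.12), and the present paper only cites it, so there is no internal proof to compare yours against; the comparison has to be with the argument in that source. Measured against it, your outline reproduces the correct skeleton --- well-definedness, surjectivity via reconstruction of a model $Y$ and divisors $\cD(u)$ from the weight decomposition $A=\bigoplus_u A_u$, injectivity via the ambiguity in that reconstruction --- and your first paragraph (grading gives the action, superadditivity gives the ring structure, $A\subset \bK(Y)[M]$ gives integrality, effectivity from full-dimensionality of the weight cone) is essentially right, modulo the fact that normality and finite generation each need a real argument rather than the one-line appeals to ``reflexivity'' and ``semiampleness''.

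The genuine gap is that the steps you yourself flag as ``the main obstacle'' are precisely the mathematical content of the theorem, and your proposal labels them rather than performs them. For surjectivity you ``choose a normal semiprojective variety $Y$ with $\bK(Y)=K_0$ over which every homogeneous component is realized as the space of global sections''; the existence of such a simultaneous model is the hard part, and in \cite{tvar1} it is not a choice but a construction: $Y$ is (the normalization of a distinguished component of) the inverse limit of the GIT quotients of $X$ associated to the various weights $u$, and its semiprojectivity, i.e.\ projectivity over $\spec A_0$, falls out of that construction. The divisors themselves are produced by fixing nonzero semi-invariants $f_u\in A_u$ and realizing $A_u f_u^{-1}\subset K_0$ as $\Gamma(Y,\cO(\cD(u)))$; the dependence on the $f_u$ is exactly the polyhedral principal divisor ambiguity, but making the assignment $u\mapsto \cD(u)$ superadditive and \emph{piecewise linear with finitely many pieces} (so that the coefficient functions are genuinely support functions of polyhedra) requires an argument through finite generation of $A$ that you only gesture at, as does the verification of semiampleness and of bigness on $\relint\sigmav$. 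Finally, injectivity does not follow formally from the non-canonicity of $Y$: one must show that an arbitrary $T$-equivariant isomorphism $\TV(\cD)\cong\TV(\cD')$ is induced by the two permitted operations, and in \cite{tvar1} this is Corollary 8.12, a consequence of the functoriality theory of their Section 8 (equivariant morphisms correspond to suitable maps of p-divisors), which your sketch does not develop. As it stands, your text is a correct road map of the Altmann--Hausen proof with the hard segments left untraveled.
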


\subsection{Complexity-One $T$-Varieties}
\label{subsec:complOne}
Having established the theory of affine $T$-varieties for arbitrary complexity in the previous section, we will from now on restrict to
\emph{complexity one} $(k = d-1)$. This restriction comes with a lot of technical simplifications, because $Y =: C$ becomes a
smooth projective curve. For example, introducing the addition rule $\Delta + \emptyset := \emptyset$, we may define the degree of
$\cD = \sum_i \cD_i \otimes D_i$ as 
\[\deg \cD := \sum_i (\deg D_i) \cdot \cD_i \in \Pol^+_\bQ(N,\tail \cD).\]
In particular, $\deg \cD = \emptyset \iff  \Loc \cD \neq C \iff \Loc \cD$ is affine. This easily implies the following criterion:
$\cD$ is a p-divisor if and only if $\deg \cD \subsetneq \tail \cD$ and, additionally, $\cD((\gg 0)\cdot w)$ is principal for
$w \in (\tail \cD)^\vee$ with $w^\perp \cap (\deg \cD)\neq 0$. Note that the latter condition is automatically fulfilled if $C = \bP^1$.

The assignment $\cD \mapsto \TV(\cD)$ from Theorem \ref{thm:affCorr} is functorial. In particular, as was shown in \cite{polTvar},
if $\cD$ is a p-divisor containing some $\cD'= \sum_i \cD_i' \otimes D_i$ (meaning that $\cD'_i \subseteq \cD_i$ for all $i$), then
$\cD'$ is again a p-divisor which induces a $T$-equivariant open embedding $\TV(\cD') \hookrightarrow \TV(\cD)$ if and only if
$\cD' \leq \cD$, i.e.\ if all coefficients $\cD_i' \leq \cD_i$ are faces, and $\deg \cD' = \deg \cD \cap \tail \cD'$.

In particular, if p-divisors $\cD^\nu$ are arranged in a so-called \emph{divisorial fan} $\fan = \{\cD^\nu\}$, then we can glue the
associated affine $T$-varieties $\TV(\cD^\nu)$ to obtain a separated $\TV(\fan) = \bigcup_\nu \TV(\cD^\nu)$, cf.\ \cite{tvar2}. The
\emph{slices} $\fan_i = \{\cD^\nu_i\}$ form a polyhedral subdivision in $N_\bQ$, and $\tail \fan := \{\tail \cD_i\}$ is
called the \emph{tailfan} of $\fan$.

Moreover, the subsets $\deg \cD \subsetneq \tail \cD$ glue to a subset $\deg \fan \subsetneq |\tail \fan|\subseteq N_\bQ$. Roughly
speaking, we understand that $\fan = \sum_i \fan_i\otimes D_i$. Yet, to keep the full information of the divisorial fan $\fan$ one
needs a labeling of the $\fan_i$-cells indicating the p-divisor they come from. However, following \cite{polTvar}, the technical
description can be reduced considerably since one may eventually forget about the labeling. Instead, one only needs to mark those
cones $\tail \cD_i$ inside $\tail \fan$ which have non-empty $\deg \cD_i$. The marked cones together with the formal sum
$\fan = \sum_i \fan_i \otimes D_i$ then yield a \emph{marked fansy divisor} associated with the divisorial fan $\fan$,
cf.\ (\ref{subsec:marking}).

\subsection{Toric Downgrades}
\label{subsec:toricDown}
A handy technique to generate instructive examples is to consider toric varieties with an effective subtorus action
(cf.\ \cite[Section 5]{tvar2}). Let $\TV(\Sigma)$ be a (complete) $d$-dimensional toric variety with embedded torus $T_N$. Fixing
a subtorus $T_{N'} \hookrightarrow T_N$ then corresponds to an exact sequence of lattices 
\[\xymatrix{ 0 \ar[r] & N' \ar[r]^{F} & N \ar[r]^{P} &  N'' \ar[r] &  0 } \,.\] 
By choosing a cosection $s: N \to N'$, we induce a splitting $N \cong N' \oplus N''$ with projections $s:\, N \rightarrow N'$, and
$P:\, N \rightarrow N''$. Set $Y \ldef \TV(\Sigma')$, where $\Sigma'$ is an arbitrary smooth projective fan $\Sigma'$ refining
the images $P(\delta)$ of all cones $\delta \in \Sigma$. Then every cone $\sigma \in \Sigma(d)$ gives rise to a p-divisor
$\cD^{\sigma}$. Namely, for each ray $\rho' \in \Sigma'(1)$, let $n_{\rho'}$ denote its primitive generator and set
\[\cD_{\rho'}(\sigma) = s_\bQ \big( P_\bQ^{-1}(n_{\rho'})\cap \sigma \big) \quad \textnormal{ and } \quad  \cD^\sigma =
\sum_{\rho' \in \Sigma'(1)} \cD_{\rho'}(\sigma) \otimes D_{\rho'}.\] 
Finally, $\{\cD^\sigma\}_{\sigma \in \Sigma(d)}$ is a divisorial fan. Observe that for certain p-divisors $\cD^\sigma$ and rays
$\rho' \in \Sigma'(1)$ the intersection $P_\bQ^{-1}(n_{\rho'}) \cap \sigma$ may be empty. In this case we have that
$\cD_{\rho'}(\sigma) = \emptyset$.

\begin{example}
\label{ex:hirzebruch}
We consider the $n$'th Hirzebruch surface $\bF_n$ as a $\bK^*$-surface via the following maps of lattices 
\[F = \left(\begin{array}{c} 1 \\ 0 \end{array} \right)\,, \qquad  P = \left(\begin{array}{cc} 0 & 1 \end{array}\right)\,, \qquad
 s = \left(\begin{array}{cc} 1 & 0 \end{array}\right)\,.\]
The slices of the divisorial fan $\fan$ which arise from this downgrade are illustrated in Figure \ref{fig:TDhirzebruch}. More
specifically, we have that
\[\begin{array}{ll} \cD^{\sigma_0} = [0,\infty) \otimes [0] + \emptyset \otimes [\infty], & \cD^{\sigma_1} = [-1/n \;\; 0] \otimes [0]
+ \emptyset \otimes [\infty]\,, \\[1mm] \cD^{\sigma_2} = (-\infty \;\, -1/n] \otimes [0]\,,  & \cD^{\sigma_3} = \emptyset \otimes [0]
+ [0 \;\; \infty) \otimes [\infty]\,. \end{array}\]
\end{example}

\begin{figure}[h]
\centering
\TDhirzebruch
\caption{Divisorial fan associated to $\bF_n$, cf.\ Example \ref{ex:hirzebruch}.}
\label{fig:TDhirzebruch}
\end{figure}

\subsection{Marked Fansy Divisors}
\label{subsec:marking}
A \emph{marked fansy divisor} on a curve $C$ is a formal sum 
\[\Xi = \sum_{P \in C} \Xi_P \otimes [P]\]
together with a complete fan $\Sigma \subset N_\bQ$ and a subset $\cC \subset \Sigma$ such that 

\begin{enumerate}
\item for all $P \in Y$, the coefficient $\Xi_P$ is a complete polyhedral subdivision of $N_\bQ$ with $\tail \Xi_P = \Sigma$.
\item for a cone $\sigma \in \cC$ of full dimension the p-divisor $\cD^\sigma = \sum_P \cD^\sigma_P \otimes [P]$ is proper where
$\cD^\sigma_P$ denotes the unique polyhedron in $\Xi_P$ whose tailcone is equal to $\sigma$.
\item for a full dimensional cone $\sigma \in \cC$ and a face $\tau \prec \sigma$ we have that $\tau \in \cC$ if and only if $\deg \cD^\sigma \cap \tau \neq \emptyset$.
\item if $\tau$ is a face of $\sigma$ then $\tau \in \cC$ implies that $\sigma \in \cC$.
\end{enumerate}

The elements of $\cC \subset \Sigma$ are called \emph{marked} cones. Given a marked fansy divisor $\Xi$ on the curve $C$ one can
construct a complete divisorial fan $\fan$ with $\Xi(\fan) = \Xi$. Moreover, two divisorial fans $\fan_1,\fan_2$ with
$\Xi(\fan_1) = \Xi(\fan_2)$ yield the same $T$-variety $\TV(\fan_1) = \TV(\fan_2)$, cf.\ \cite[Proposition 1.6]{polTvar}.

Conversely, one can easily associate a marked fansy divisor to a given divisorial fan $\fan$ by setting
\[\Xi(\fan) \ldef \sum_P \fan_P \otimes [P]\,, \quad \textnormal { and } \quad \cC(\fan) \ldef \big\{ \tail \cD \;|\; \cD \in \fan,
\, \Loc \cD = C \big\}\,. \]
Note that the elements of $\cC(\fan)$ capture the information of which orbits are identified via the contraction
$r: \wt{\TV}(\fan) \to \TV(\fan)$.

\begin{example}
\label{ex:hirzebruchFD}
The marked fansy divisor for $\bF_n$ as depicted in Example \ref{ex:hirzebruch} consists of the following data:
\[\Xi_0 = \fan_0, \quad \Xi_\infty = \fan_\infty, \quad \Sigma = \tail \fan, \quad \cC = \{(-\infty,0]\}\,.\]
\end{example}

\subsection{Divisorial Polytopes}
\label{subsec:divisorialPolytopes}
Following \cite{polTvar}, we also briefly recall the description of polarized complexity-one $T$-varieties in terms of divisorial
polytopes. This correspondence is a generalization of the relation between polarized projective toric varieties and lattice polytopes.

A \emph{divisorial polytope} $(\Psi,\Box,C)$ consists of a lattice polytope $\Box \subset M_\bQ$, a smooth projective curve $C$, and
a map
\[\Psi = \sum_{P \in C} \Psi_P \otimes [P]: \;\; \Box \longrightarrow \cadiv_\bQ C \,,\]
with concave piecewise affine linear ``coordinate'' functions $\Psi_P: \Box \to \bQ$ such that 
\begin{enumerate}
\item for all but finitely many $P \in C$ we have that $\Psi_P \equiv 0$.
\item $\degree \Psi(u) > 0$ for $u$ in the interior of $\Box$;
\item for $u$ a vertex of $\Box$, $\degree \Psi(u) >0$ or $\Psi(u) \sim 0$;
\item for all $P \in C$ the graph of $\Psi_P$ is integral, i.e.\ its vertices lie in $M \times \bZ$.
\end{enumerate}

See \cite[Section 3]{polTvar} for a description how to construct a marked fansy divisor $\Xi(\Psi)$ from a triple $(\Psi,\Box,C)$. The
crucial fact about this construction is that it provides us with a one-to-one correspondence between divisorial polytopes and pairs
$(X,\cL)$ of complexity-one $T$ varieties $X$ with an equivariant ample line bundle $\cL$ via the map
\[(\Psi,\Box,C) \mapsto \big(\TV\big(\Xi(\Psi)\big),\cO(D_{\Psi^*})\big),\] 
cf.\ \cite[Theorem 3.2]{polTvar}.

\subsection{Invariant Weil Divisors}
\label{subsec:WD}
For more details on the current subject we refer the reader to \cite[Section 3]{tidiv}. Let $\cD$ be a p-divisor on the curve $C$.
The elements of the set of invariant prime divisors in $\wt{\TV}(\cD)$ split into two different types. On the one hand, we have the
so-called \emph{vertical} divisors
\[D_{(P,v)} := \ovl{\orb}(P,v).\]
They are associated with prime divisors $P \in C$ together with a vertex $v \in \cD_P$. On the other hand, there are the so-called
\emph{horizontal} divisors
\[D_\rho := \ovl{\orb}(\eta(C),\rho).\]
These correspond to rays $\rho$ of the cone $\tail \cD = \cD_{\eta(C)}$, where $\eta(C)$ denotes the generic point of $C$. The
invariant prime divisors in $\TV(\cD)$ then correspond exactly to those on $\wt{\TV}(\cD)$ which are not contracted via $r: \wt{\TV}(\cD)
\to \TV(\cD)$. Finally, we denote the free abelian group of $T$-equivariant Weil divisors in $\TV(\cD)$ by $\tdiv \TV(\cD)$. 

Note that the vertical divisors $D_{(P,v)}$ (with $P\in C(\bK)$ and $v \in \cD_P$) survive completely in $\TV(\cD)$ whereas
$D_\rho$ becomes contracted if and only if the ray $\rho$ is not disjoint from $\deg \cD$.

\begin{definition}
\label{def:extremalRay}
Let $\cD$ be a p-divisor on the curve $C$. The set of rays $\rho \in (\tail \cD)(1)$ with $\deg \cD \cap \rho = \emptyset$ is denoted
by $\cR(\cD)$. Given a divisorial fan $\fan$ on $C$, we set $\cR(\fan) \ldef \{\cR(\cD)\,|\, \cD \in \fan\}$.
\end{definition}

\subsection{Divisor Classes}
\label{subsec:divCL}
We keep the notation from (\ref{subsec:WD}) and denote by $\bK(C)$ the function field of $C$. The ring of semi-invariant, i.e.\
$M$-homogeneous rational functions on $\TV(\fan)$ is then given by $\bK(C)[M]$. For $v \in N_\bQ$
let $\mu(v)$ be the smallest integer $k \geq 1$ such that $k\cdot v$ is a lattice point and denote by $n_\rho$ the primitive generator
of the ray $\rho$. According to \cite[Proposition 3.14]{tidiv} the principal divisor associated to $f \chi^u \in \bK(C)[M]$ on
$\wt{\TV}(\cD)$ or $\TV(\cD)$ is then given by
\[\divisor \big(f \chi^u \big) = \sum_{\rho} \langle n_\rho,u \rangle D_\rho + \sum_{(P,v)} \mu(v) \big(\langle v,u\rangle +
\ord_P f \big) D_{(P,v)}\]
where, if focused on $\TV(\cD)$, one is again supposed to omit all prime divisors being contracted.

Let $\fan = \sum_{P \in \bP^1} \fan_P \otimes [P]$ be a complete divisorial fan on $C = \bP^1$. In particular,
$\deg \fan \subsetneq |\tail \fan| = N_\bQ$. Choose a non-empty finite set of points $\cP \subseteq \bP^1$ such that $\fan_p$ is
trivial (i.e.\ $\fan_P = \tail \fan$) for $P \in \bP^1 \setminus \cP$. For a vertex $v$ from some slice of $\fan$ we denote by
$P(v) \in \bP^1$ the point which is associated to the slice we have taken $v$ from. Let us furthermore define
$\cV \ldef \{v \in \fan_{P}(0) \lst P \in \cP\}$ and the following two natural maps
\[Q: \bZ^{(\cV \cup \cR)} \to \bZ^\cP/\bZ \quad \textnormal{with} \quad
e(v) \mapsto \mu(v)\,\ovl{e(p(v))} \quad \textnormal{and} \quad
e(\rho) \mapsto 0\,,\]
and
\vspace{-1ex}
\[\phi: \bZ^{(\cV \cup \cR)} \to N \quad \textnormal{with} \quad e(v) \mapsto \mu(v)v \quad \textnormal{and} \quad e(\rho)\mapsto
n_\rho \]
with $e(v)$ and $e(\rho)$ denoting the natural basis vectors. Deducing from (\ref{subsec:WD}) that
$\big(\bZ^{(\cV \cup \cR)}\big)^\vee \subset \tdiv (\TV(\fan))$, the discussion from above implies that one has the following
exact sequence describing the class group $\cl(\TV(\fan))$ of a complete rational complexity-one $T$ variety $\TV(\fan)$ (see also
\cite[Corollary 2.3]{tcox}):
\[0 \to (\bZ^\cP/\bZ)^\vee \oplus M \to \big(\bZ^{(\cV \cup \cR)}\big)^\vee \to \cl(\TV(\fan))\to 0 \]
where the first map is induced from $(Q,\phi)$.

With a view towards upcoming examples we finally recall that the canonical class of $\TV(\fan)$ can be represented as
\[K_{\TV(\fan)} = - \sum_{\rho \in \cR(\fan)} D_{\rho} + \sum_{(P,v)} \big(\mu(v) \coeff_P(K_C) + \mu(v) - 1 \big) D_{(P,v)} \,,\]
where $K_C$ is a representative of the canonical divisor on $C$ (cf.\ \cite[Theorem 3.21]{tidiv}).

\subsection{Invariant Cartier Divisors}
\label{subsec:CD}
Let $\fan$ be a divisorial fan on the curve $C$. A \emph{divisorial support function} on $\fan$ is a collection $(h_P)_{P \in C}$
of continuous piecewise affine linear functions $h_P: |\fan_P| \to \bQ$ such that 

\begin{enumerate}
\item $h_P$ has integral slope and integral translation on every polyhedron in the polyhedral complex $\fan_P \subset N_\bQ$.
\item all $h_P$ have the same linear part $=: \ul{h}$.
\item the set of points $P \in C$ for which $h_P$ differs from $\ul{h}$ is finite. 
\end{enumerate}

Observe that we may restrict an element $h_P \in \SF(\fan_P)$ to a subcomplex of $\fan_P$. More generally, we may restrict a
divisorial support function $h \in \SF(\fan)$ to a p-divisor $\cD \in \fan$. The latter restriction will be denoted by $h|_\cD$.

In addition, we can associate a divisorial support function $\SF(D)$ to any Cartier divisor $D \in \cadiv(C)$ by setting
$\SF(D)_P \equiv \coeff_P(D)$. Moreover, we can consider any element $u \in M$ as a divisorial support function by setting
$\SF(u)_P \equiv u$.

\begin{definition}
\label{def:CaSF}
A divisorial support function $h \in \SF(\fan)$ is called \emph{principal} if $h =  \SF(u) + \SF(D)$ for some $u \in M$ and some
principal divisor $D$ on $C$. It is called \emph{Cartier} if its restriction $h|_\cD$ is principal for every $\cD \in \fan$ with
$\Loc \cD = C$. The set of divisorial Cartier support functions is a free abelian group which we denote by $\CaSF(\fan)$.
\end{definition}

Let $\TV(\fan)$ be a complexity-one $T$-variety and denote by $\tcadiv(\TV(\fan))$ the free abelian group of $T$-invariant Cartier
divisors on $\TV(\fan)$. Proposition 3.10 from \cite{tidiv} states that $\tcadiv(\TV(\fan)) \cong \CaSF(\fan)$ as free abelian groups.
Thus, we will often identify an element $h \in \CaSF(\fan)$ with its induced $T$-invariant Cartier divisor $D_h$ via this
correspondence. Finally, we recall that the Weil divisor associated to a given Cartier divisor $h = (h_P)_P$ on $\TV(\fan)$ is equal to
\[-\sum_{\rho} \ul{h} (n_\rho) D_\rho - \sum_{(P,v)} \mu(v) h_P(v) D_{(P,v)}.\]
%

\subsection{Global Sections}
\label{subsec:globalSections}
Consider an invariant Cartier divisor $D_h$ together with its associated equivariant line bundle $\cO(D_h)$ on $\TV(\fan)$. Due to the
torus action we have an $M$-module structure on the $\bK$-vector space of global sections which decomposes into homogeneous summands
with respect to the elements of the character lattice
\[\Gamma\big(\TV(\fan),\cO(D_h)\big) = \bigoplus_{u\in M} \Gamma\big(\TV(\fan),\cO(D_h)\big)_u\,.\]
The so-called \emph{set of weights} of $D_h$ is defined as 
\[W(h) \ldef W(D_h) \ldef \{u \in M \mid \Gamma\big(\TV(\fan),\cO(D_h)\big)_u \neq 0\}\,.\]
We will now see how to bound $W(h)$ by a polyhedron which is defined via $h \in \CaSF(\fan)$ and how to describe the homogeneous
sections of a fixed weight $u \in M$ in terms of rational functions on the curve $C$.

\begin{definition}
\label{def:weightPolytopeCartier}
Given a Cartier support function $h = (h_P)_P$ on $\fan$ with linear part $\ul{h}$ we define its associated \emph{weight polyhedron}
as
\[\Box_h \def \Box_{\ul{h}} \ldef \{u \in M \mid  \langle u, v \rangle  \geq  \ul{h}(v) \quad\textnormal{for all}\quad v \in N\}\,.\]
In addition, we define the map $h^*:\Box_h \rightarrow \wdiv_\bQ C$ by 
\[h^*(u) \ldef \sum_P h_P^*(u)P \ldef \sum_P \minvert(u-h_P)P,\]
where $\minvert(u - h_P)$ denotes the minimal value of the continuous piecewise linear function $u - h_P$ along the vertices of
$\fan_P$. 
\end{definition}

The weight polyhedron captures the restriction of $h$ to the generic fiber $\TV(\tail \fan)$. It is compact if and only if $\fan$ is
complete, and its tailcone is given as the intersection of the dual cones of the elements in $\tail \fan$. Given a Cartier divisor
$D_h \in \tcadiv(\fan)$ with linear part $\ul{h}$ we have the following description of its global sections (cf.\
\cite[Proposition 3.23]{tidiv}):
\begin{enumerate}
\item $W(h)$ is a subset of $\Box_{h}$.
\item For a character $u \in \Box_{h}$ we have that
\[\Gamma\big(\TV(\fan),\cO(D_h)\big)_u = \Gamma\big(\Loc \fan,\cO_{\Loc \fan}(h^*(u))\big).\]
\end{enumerate}

\subsection{Examples}
\label{subsec:exGlobalSection}

\begin{figure}[b]
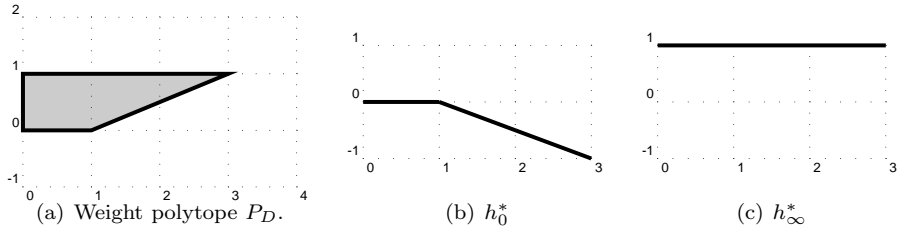

\centering
\subfigure[Weight polytope $P_D$.]{\hirzebruchTwoPolytope}\hspace*{5ex}
\subfigure[$\hstar_0$]{\hstarzero}\hspace*{5ex}
\subfigure[$\hstar_\infty$]{\hstarinfty}
\caption{Weight polytope and its divisorial analogue of the very ample line bundle $\cO(D)$ on $\bF_2$, cf.\ Example
\ref{ex:hirzebruch2LB}.}
\label{fig:hirzebruchData}
\end{figure}

\begin{example}
\label{ex:hirzebruch2LB}
We consider the downgrade of the second Hirzebruch surface $\bF_2$ as described in Example \ref{ex:hirzebruch} together with the
line bundle $\cL = \cO(D)$ given through the following generators of the global sections over the affine charts $U_{\sigma_i}$:
\[u_{\sigma_0} = [0 \; 0]\,,\quad u_{\sigma_1} = [1 \; 0]\,,\quad u_{\sigma_2} = [3 \; 1]\,,\quad u_{\sigma_3} = [0 \; 1]\,.\] 
Note that $\cL$ is very ample and defines an embedding into $\bP^5$. One can describe the embedding by a polytope $P_D \subset M_{\bQ}
= \bQ^2$ which is the convex hull of the $u_{\sigma_i}$. It contains six lattice points which form a basis of the $\bK$-vector space
$\Gamma(\bF_2,\cL)$, cf.\ Figure \ref{fig:hirzebruchData}(a). Using the toric downgrade construction from (\ref{subsec:toricDown}),
one finds that $\cL = \cO(D)$ with 
\[D = D_{([0],-1/2)} + D_{([\infty],0)}.\] 
By (\ref{subsec:globalSections}), we have $\Box_h = \{u \in \bZ \, | \; 3 \geq u \geq 0 \}$, and 
\[\begin{array}{ll} \Gamma(\bF_2,D_h)_0 = \Gamma \big(\bP^1,\cO([\infty])\big)\,, & \Gamma(\bF_2,D_h)_1 =
\Gamma \big(\bP^1,\cO([\infty])\big)\,, \\ \Gamma(\bF_2,D_h)_2 = \Gamma \big(\bP^1,\cO([\infty]- 1/2[0])\big)\,, &
\Gamma(\bF_2,D_h)_3 = \Gamma \big(\bP^1,\cO([\infty]-[0])\big)\,. \end {array} \]
On the whole, they sum up to a six dimensional vector space as expected from the toric picture. The corresponding graphs of $\hstar_0$
and $\hstar_\infty$ are shown in Figure \ref{fig:hirzebruchData}(b)+(c).
\end{example}

\begin{figure}[h]
\centering
\subfigure[$\fan_1$]{\quadricFanOne}\hspace*{2ex}
\subfigure[$\fan_0$]{\quadricFanZero}\hspace*{-2ex}
\subfigure[$\fan_\infty$]{\quadricFanInfty}
\caption{Non-trivial slices of $\fan(Q)$, cf.\ Example \ref{ex:quadric}.}
\label{fig:quadricFan}
\end{figure}

\begin{example}
\label{ex:quadric} 
The divisorial fan associated to the smooth quadric $Q = \TV(\fan)$ in $\bP^4$ as presented in \cite[Example 1.10]{candiv} is given in
Figure \ref{fig:quadricFan}. Note that $Q$ is Fano, i.e.\ $-K_Q$ is ample. The associated tailfan $\Sigma$ and degree $\deg \fan$ are
given in Figure \ref{fig:quadricFanPlus}. All maximal p-divisors have complete locus, i.e.\ $\cR = \emptyset$ and all rays in the
tailfan are marked.

\begin{figure}[h]
\centering
\subfigure[$\Sigma = \tail \fan$]{\tailfanQuadric}
\hspace*{8ex}
\subfigure[$\deg \fan$]{\degreeQuadric}
\caption{Tailfan and degree of $\fan(Q)$, cf.\ Example \ref{ex:quadric}.}
\label{fig:quadricFanPlus}
\end{figure}

Let us consider the the anti-canonical divisor which may be represented as $3D_{[\infty],(1/2,1/2)}$. The weight polytope $\Box_h$
associated to the corresponding support function $h$ is given in Figure \ref{fig:boxes}(a).
\end{example}

\begin{figure}[h]
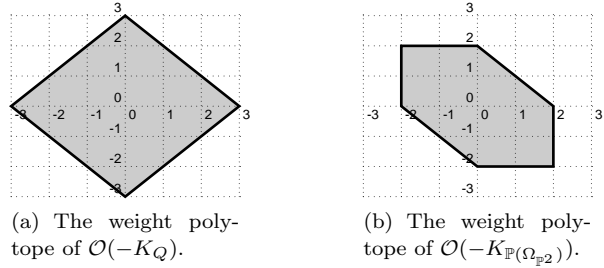

\centering
\subfigure[The weight polytope of $\cO(-K_Q)$.]{\boxQuadric}\hspace*{10ex}
\subfigure[The weight polytope of $\cO(-K_{\bP(\Omega_{\bP^2})})$.]{\boxCotang}
\caption{The weight polytopes from Examples \ref{ex:quadric} and \ref{ex:cotang}.}
\label{fig:boxes}
\end{figure}

\begin{example}
\label{ex:cotang}
We consider the smooth projective complexity-one Fano $T$-three\-fold $\bP(\Omega_{\bP^2})$ from \cite[section 8.5]{tvar2}.
The non-trivial slices of its divisorial fan $\fan$ over $\bP^1$ are illustrated in Figure \ref{fig:cotangFan}, whereas
$\Sigma = \tail \fan$ and $\deg \fan$ are given in Figure \ref{fig:cotangPlus}. As in the previous example, all maximal p-divisors
have complete locus, i.e.\ $\cR = \emptyset$ and all rays in the tailfan are marked.

\begin{figure}[h]
\centering
\subfigure[$\fan_0$]{\cotangfannullvv}\hspace*{5ex}
\subfigure[$\fan_1$]{\cotangfanonevv}\hspace*{5ex}
\subfigure[$\fan_\infty$]{\cotangfaninftyvv}
\caption{Non-trivial slices of $\fan(\bP(\Omega_{\bP^2}))$, cf.\ Example \ref{ex:cotang}.}
\label{fig:cotangFan}
\end{figure} 

Again, we consider the ample anti-canonical divisor $K_{\bP(\Omega_{\bP^2})}$ and use $K_{\bP^1} = -2[0]$
as a representative of the canonical divisor on $\bP^1$. We obtain that
\[ -K_{\bP(\Omega_{\bP^2})} = 2D_{[0],(0,0)} + 2D_{[0],(0,1)} \,.\] 
The weight polytope of the corresponding support function is pictured in Figure \ref{fig:boxes}(b).

\begin{figure}[h]
\vspace*{-2ex}
\centering
\subfigure[$\Sigma = \tail \fan$]{\tailFan}\hspace*{8ex}
\subfigure[$\deg \fan$]{\degreeFan}
\caption{Tailfan and degree of $\fan\big(\bP(\Omega_{\bP^2})\big)$, cf.\ Example \ref{ex:cotang}.}
\label{fig:cotangPlus}
\end{figure}

\end{example}

\section{Okounkov Bodies}
\label{sec:OB}

The aim of this section is to compute Okounkov bodies of complexity-one $T$-varieties. After recalling
the general construction and some results from toric geo\-metry, we construct two types of admissible flags
in the complexity-one setting and describe the associated Okounkov bodies. Furthermore, we compute them for various examples.

If not stated otherwise all divisors in this section are supposed to be Cartier.

\subsection{Construction}
\label{subsec:OC}
In a series of papers \cite{okounkov1,okounkov2} on log-concavity of multiplicities Andrei Okounkov gave a procedure to associate
a convex set to a linear system on a projective variety. Although Okounkov essentially worked in the setting of ample line bundles,
the construction works perfectly well for big divisor classes. Robert Lazarsfeld and Mircea Musta\c{t}\u{a} thoroughly studied this
setting and recovered many fundamental results from the asymptotic theory of linear series, cf.\ \cite{okounkovBody}.

Let us now briefly recall the construction of the so-called Okounkov body as presented in \cite[Section 1]{okounkovBody}. Denote by
$X$ a projective variety of dimension $d$ and fix a flag 
\[Y_\bullet \;: \; X = Y_0 \supset Y_1 \supset Y_2 \supset \dots \supset Y_{d-1} \supset Y_d = \{\textnormal{pt}\}\,,\] 
consisting of subvarieties $Y_i$ of codimension $i$ in $X$ each of which is non-singular at the point $Y_d$. A flag $Y_\bullet$ as
above will be called an \emph{admissible flag}.

For any divisor $D$ on $X$ one can define a valuation-like function 
\[\nu_{Y_\bullet,D} \;: (H^0(X,\cO_X(D)) \setminus \{0\}) \to \bZ^d, \; s \mapsto \nu_{Y_\bullet,D}(s) = (\nu_1(s),\dots,\nu_d(s))\]
by an inductive procedure. Restricting to a suitable open neighborhood of the smooth point $Y_d$, we may assume that $Y_{i+1}$ is a
Cartier divisor on $Y_i$ for $0 \leq i \leq d-1$. 

To begin with, we set $\nu_1(s) = \ord_{Y_1}(s)$ where $\ord_{Y_1}(s)$ denotes the vanishing order of $s$ along $Y_1$. In other words,
it is equal to $\ord_{Y_1}(\divisor(s) + D)$. By choosing a local equation for $Y_1$ in $X$, our section $s$ determines in a natural
way a section $\wt{s}_1 \in H^0(X,\cO_X(D- \nu_1(s) Y_1))$ which does not vanish identically along $Y_1$. Restricting $\wt{s}_1$ to
$Y_1$ gives us a non-zero section $s_1 \in H^0(Y_1,\cO_{Y_1}(D-\nu_1(s) Y_1)$ and we set $\nu_2(s) = \ord_{Y_2}(s_1)$. We can define
the remaining $\nu_i(s)$ analogously and thus obtain the valuation vector $(\nu_1(s),\dots,\nu_d(s))$ associated to $s \in
\Gamma(X,\cO_X(D))$.

Observe that the valuation-like function $\nu_{Y_\bullet,\cdot}$ has the following properties:

\begin{enumerate}
\item Ordering $\bZ^d$ lexicographically,
\[\val{D}(s_1 + s_2) \geq \min \{\val{D}(s_1),\val{D}(s_2)\}\]
for any $s_1,s_2 \in \Gamma(X,\cO_X(D)) \setminus \{0\}$.
\item  For $s \in \Gamma(X,\cO_X(D)) \setminus \{0\}$ and $t \in \Gamma(X,\cO_X(E)) \setminus \{0\}$
\[\val{D+E}(s \otimes t) = \val{D}(s) + \val{E}(t)\,.\]
\end{enumerate}

\noindent
Working with a fixed divisor $D$ we will often simply write $\nu_{Y_{\bullet}}(s)$ instead of $\val{D}(s)$. Moreover, we denote by
$\nu_{Y_{\bullet}}(D)$ the set of all $\nu_{Y_{\bullet}}(s)$ for $s \in \Gamma(X,\cO_X(D)) \setminus \{0\}$.

\begin{definition}
\label{def:gradedSemigroup}
Let $X$ be a projective variety, $D$ a divisor on $X$ and $Y_\bullet$ a fixed admissible flag. The \emph{graded semigroup} of $D$ with
respect to the flag $Y_\bullet$ is the subsemigroup
\[\Gamma_{Y_\bullet}(D) = \big\{(\nu_{Y_\bullet}(s),m) \,\big|\, s \in \Gamma(X,\cO_X(mD)) \setminus \{0\},\, m \geq 0 \big\} \subset
\bN^d \times \bN\,.\]
\end{definition}

\begin{definition}
\label{def:OB}
Let $X$ be a projective variety, $D$ a divisor on $X$, and $Y_\bullet$ a fixed admissible flag. The \emph{Okounkov body} of $D$ with
respect to the flag $Y_\bullet$ is defined as 
\[\OB(D) = \ovl{\conv( \bigcup_{m\geq 1} 1/m \cdot \nu_{Y_{\bullet}}(mD))} \subset \bR^d\,.\] 
\end{definition}

By construction, we have that $\OB(D) \subset \posOrthant{\bR^d}$. It is shown in \cite[Theorem 2.3]{okounkovBody} that
$\vol_{\bR^d}(\OB(D)) =  \vol_X(D)/d!$ for a big divisor $D$ on a projective variety $X$ of dimension $d$, where 
\[\vol_X(D) = \limsup_{m \to \infty} \frac{\dim \Gamma(X,\cO_X(mD))}{m^d/d!}.\]
If $D$ is nef this quantity is equal to the top self-intersection number $D^d$, see \cite[p.\,148]{lazarsfeld1}. In particular,
$\OB(D)$ has a non-empty interior. Furthermore, $\OB(D)$ only depends on the numerical equivalence class of $D$
(cf.\ \cite[Proposition 4.1]{okounkovBody}), and $\OB(kD) = k \cdot \OB(D)$. This equality moreover says that $\OB(\xi)$ is well
defined for big classes $\xi \in N^1(X)_\bQ$. Indeed, we simply set
\[\OB(\xi) := \frac{1}{k}\OB(k \cdot \xi)\]
for some $k \in \bZ_{\geq 1}$ such that $k \cdot \xi \in N^1(X)$.

Using the correspondence between Cartier divisors and line bundles on $X$, we will sometimes switch notation from $\OB(D)$ to
$\OB(\cL)$ if $\cL \cong \cO_X(D)$.

Note that Okounkov bodies may very well be non-polyhedral, and even when polyhedral they often are not rational,
cf.\ \cite[6.2-6.3]{okounkovBody}. Nonetheless, a very nice feature is that the set of Okounkov bodies $\OB(\xi)$
for all big rational classes $\xi \in N^1(X)_\bQ$ fit together to a global convex object.

\begin{definition}
\label{def:globalOB}
Cf. \cite[Theorem 4.5]{okounkovBody}. Let $X$ be a projective variety and $Y_\bullet$ a fixed admissible flag. The
\emph{global Okounkov body} $\OB(X)$ of $X$ with respect to the flag $Y_\bullet$ is defined as the closed convex cone $\OB(X)
\subset \bR^d \times N^1(X)_\bR$ such that the fiber of the projection $\bR^d \times N^1(X)_\bR \to N^1(X)_\bR$ over any big
class $\xi \in N^1(X)_\bQ$ is equal to $\OB(\xi)$.
\end{definition}

By construction the global Okounkov body projects to the pseudoeffective cone 
\[\psEff = \ovl{\BigC(X)} \subset N^1(X)_\bR\] 
which is the closure of the big cone $\BigC(X)$.

\subsection{Toric Varieties}
\label{subsec:toricOB}
Let $\TV(\Sigma)$ be a smooth projective toric variety of dimension $d$ which is given by a fan $\Sigma$ in $N_\bQ$, and let
$m$ be the number of rays $\rho \in \Sigma(1)$ corresponding to the torus invariant prime divisors in $X$. Recall the exact
sequence 
\[0 \longrightarrow M \stackrel{\iota}{\longrightarrow} \bZ^m \stackrel{\textnormal{pr}}{\longrightarrow} \pic(\TV(\Sigma))
\longrightarrow 0\,. \] 
Supposing the admissible flag $Y_\bullet$ to be invariant, one can order the invariant prime divisors of $\TV(\Sigma)$ in such
a way that $Y_i = D_1 \cap\,\dots\,\cap D_i$. The set of the corresponding rays $\{\rho_1, \dots \rho_d\}$ clearly spans a
smooth $d$-dimensional cone which we denote by $\sigma$. It corresponds to the fixed point $Y_d$. Taking the primitive generators
$n_i$ of these rays as a basis for the lattice $N$, we obtain a splitting of the above exact sequence into
\[\psi:\; \bZ^d \times \pic(\TV(\Sigma)) \longrightarrow \bZ^m\] 
with $\psi^{-1}(D) = (q(D),\textnormal{pr}(D))$ and $q:\; \bZ^m \to \bZ^d$ being the projection onto the first $d$ coordinates.
We denote by $\phi: M \to \bZ^d$ the map which is given by $\phi(u) = (\langle u, n_i \rangle)_{1\leq i \leq d}$. Moreover, we
denote by $P_D \subset M_\bR$ the polytope whose lattice points correspond to the homogeneous global sections of $\cO(D)$.

\begin{proposition}
\label{prop:obToricVar}
Cf.\ \cite[Proposition 6.1]{okounkovBody}. Let $\TV(\Sigma)$ be a smooth projective toric variety, and let $Y_\bullet$ be an
admissible flag of invariant subvarieties chosen as above. 
\begin{enumerate}
\item Given any big equivariant line bundle $\cL$ on $\TV(\Sigma)$, let $D$ be the unique $T$-invariant divisor such that
$\cL \simeq \cO(D)$ and its restriction to the affine chart $U_\sigma$ is trivial. Then we have that 
\[\OB(\cL) = \phi_{\bR}(P_D)\,.\]
\item The global Okounkov body $\OB(\TV(\Sigma))$ is the inverse image of the non-negative orthant $\bR^m_{\geq 0} \subset \bR^m$
under the isomorphism
\[\psi_\bR: \; \bR^d \times \pic(\TV(\Sigma))_\bR \stackrel{\cong}{\longrightarrow} \bR^m\,. \]
\end{enumerate}
\end{proposition}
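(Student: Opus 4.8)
The plan is to reduce both statements to an explicit computation of the Okounkov valuation on the torus-invariant monomial sections, followed by a density argument for part (1) and a convex-cone argument for part (2). For part (1), recall that $\Gamma(\TV(\Sigma),\cO(D))$ has a basis of characters $\chi^u$ indexed by the lattice points $u \in P_D \cap M$, and that $\divisor(\chi^u) = \sum_\rho \langle u, n_\rho \rangle D_\rho$. First I would compute $\nu_{Y_\bullet}(\chi^u)$ by running the inductive definition of $\nu_{Y_\bullet}$ along the flag. Since $D$ is chosen trivial on $U_\sigma$, the coefficients of $D_1, \dots, D_d$ in $D$ vanish, whence $\nu_1(\chi^u) = \ord_{Y_1}(\divisor(\chi^u) + D) = \langle u, n_1 \rangle$. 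Restricting to $Y_1 = D_1$, which is again a smooth projective toric variety (with fan $\Star(\rho_1)$) carrying the induced admissible invariant flag $Y_2 \supset \dots \supset Y_d$, the section $\chi^u$ restricts to the character of the image of $u$, and the same computation yields $\nu_2 = \langle u, n_2 \rangle$. Iterating gives $\nu_{Y_\bullet}(\chi^u) = (\langle u, n_i \rangle)_{1 \le i \le d} = \phi(u)$.

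Because the valuation sends sums to lexicographic minima and the $\chi^u$ form a basis with pairwise distinct valuations, the image $\nu_{Y_\bullet}(mD)$ is exactly $\phi(P_{mD} \cap M) = \phi((mP_D) \cap M)$. I would then take the closed convex hull of $\bigcup_{m \ge 1} \tfrac{1}{m}\phi((mP_D) \cap M)$ and use that $\bigcup_{m} \tfrac{1}{m}((mP_D) \cap M)$ is dense in the rational polytope $P_D$ to conclude $\OB(D) = \phi_\bR(P_D)$.

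For part (2), I would first compute, for a big class $\ell = [D]$ with $D$ the invariant representative trivial on $U_\sigma$, the fiber of $\psi_\bR^{-1}(\bR^m_{\ge 0})$ over $\ell$. A point $x \in \bR^d$ lies in this fiber precisely when the unique $w \in \bR^m$ with $q(w) = x$ and $\textnormal{pr}(w) = \ell$ has non-negative entries; writing $w = D + \iota(u')$ for some $u' \in M_\bR$ shows $x = \phi(u')$ and $w \ge 0 \iff u' \in P_D$. Hence this fiber equals $\phi_\bR(P_D)$, which by part (1) is $\OB(D)$. Both $\OB(\TV(\Sigma))$ and $\psi_\bR^{-1}(\bR^m_{\ge 0})$ are closed convex cones in $\bR^d \times N^1(X)_\bR$, the latter projecting onto the toric pseudoeffective cone $\psEff = \overline{\textnormal{pr}_\bR(\bR^m_{\ge 0})}$, and their fibers over every big rational class coincide.

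To finish I would invoke that a closed convex cone is the closure of the union of its fibers over the interior of its projection: any boundary point is recovered as a limit of interior points by sliding it toward an interior direction. Thus both cones equal $\overline{\bigcup_{\xi \text{ big rational}} \{\xi\} \times \OB(\xi)}$ and therefore agree. The main obstacle is the inductive valuation computation in part (1): one must verify that restricting $\chi^u$ to the successive toric subvarieties $Y_i$ produces exactly the character of the image of $u$ with no spurious constant shift, for which the normalization that $D$ be trivial on $U_\sigma$ is essential. Once this is settled, the density statement and the convex-cone comparison are formal.
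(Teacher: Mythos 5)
The paper itself gives no proof of this proposition---it is quoted verbatim from Lazarsfeld--Musta\c{t}\u{a} (their Proposition 6.1)---so there is no internal argument to compare against; your proof is correct and essentially reconstructs the standard argument from that reference: the normalization of $D$ on $U_\sigma$ makes the valuation of a monomial section $\chi^u$ equal to $\phi(u)$, the homogeneity property of $\nu_{Y_\bullet}$ reduces arbitrary sections to monomials (so $\nu_{Y_\bullet}(mD)=\phi(P_{mD}\cap M)$ exactly), and density of rational points in the rational polytope $P_D$ gives part (1), while part (2) follows from your fiberwise identification $\psi_\bR^{-1}(\bR^m_{\geq 0})_\xi=\phi_\bR(P_D)=\OB(\xi)$ together with the fact that a full-dimensional closed convex cone is determined by its fibers over a dense set of big rational classes.

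One step is stated more tersely than it deserves: ``sliding a boundary point toward an interior direction'' only produces points lying over big---but possibly irrational---classes, whereas the coincidence of fibers is known only over big \emph{rational} classes. To close this, note that both cones are full-dimensional (their fibers over big classes have nonempty interior and the big cone is open and nonempty), so each equals the closure of its interior; a point of the interior can then be perturbed, staying inside the cone, so that its projection becomes a rational big class, and only then does one invoke the equality of fibers. This is a routine convexity argument, but it is the precise mechanism that your one-sentence justification glosses over.
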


\subsection{Two Types of Invariant Flags for Complexity-One $T$-Varieties}
\label{subsec:flags}
Let $\TV(\fan)$ be a projective $T$-variety of complexity one which contains at least one smooth point $\xfix$ that is fixed
under the torus action. The aim of this section is to construct $T$-invariant admissible flags $Y_\bullet$ in $\TV(\fan)$
with $Y_d = \xfix$ which will then be used for the computation of Okounkov bodies.

As before, we denote by $\cP \subset C$ a non-empty finite set of points in $C$ such that the slice $S_Q$ over a point
$Q \in C \setminus \cP$ is trivial.

\begin{definition}
\label{def:generalPoint}
A point $Q \in C \setminus \cP$ is called \emph{general}.
\end{definition}

A slice $\fan_Q$ for a general point $Q \in C$ is equal to $\Sigma := \tail \fan$, meaning that the fiber of the
quotient map $\pi: \wt{\TV}(\fan) \to C$ is equal to the toric variety $\TV(\Sigma)$. In particular it is reduced and irreducible
(cf.\ \cite[Section 7]{tvar1}).

In the following, we will present the construction of several types of admissible $T$-invariant flags in $\TV(\fan)$ which will
depend upon the choice of a smooth fixed point $\xfix \in \TV(\fan)$.

\subsubsection*{General Flags}
\label{subsub:generalFlags}
\ \\[.5ex]
$\mathbf{G_1}$ \; We assume that $\xfix$ lies over a general point $Q \in Y$ and that the maximal cone $\sigmafix \in \Sigma$
corresponding to $\xfix$ is not marked, i.e.\ $\sigmafix \notin \cC(\fan)$ (see (\ref{subsec:marking})). We
set $Y_1 := r(\pi^{-1}(Q)) \cong \TV(\Sigma)$ and proceed as in the toric case for the remaining elements of the flag. Namely, we
label the rays in the smooth cone $\sigmafix$ from $1$ to $d-1$, i.e.\ $\sigmafix(1) = \{\rho_1,\dots,\rho_{d-1}\}$ and we define
$\Delta_F(k) := \bangle{\rho_1,\dots,\rho_k}$. Thus, we see that $\Delta_F(k)$ corresponds to a $T$-orbit of codimension $k$ in
$Y_1$ which allows us to define $Y_{i+1} := r(\ovl{\orb(\Delta_F(i))}) \subset Y_1$ for $1 \leq i \leq d-1$. It is not hard to see
that such a flag is admissible. \\[1ex]
$\mathbf{G_2}$ \; We assume that the maximal cone $\sigmafix \in \Sigma$ corresponding to $\xfix$ is marked, i.e.\
$\sigmafix \in \cC(\fan)$ and smooth. Hence, we can now proceed as in the construction of an admissible flag of type $\mathbf{G_1}$
by picking a general point $Q$ and identifying $\xfix$ with the orbit that corresponds to the cone $\sigmafix \in \tail
\fan = \fan_Q$.

\subsubsection*{Toric Flags}
\label{subsubsec:toricFlags}
\ \\[.5ex]
$\mathbf{T_1}$ \; We assume that the maximal cone $\sigmafix \in \Sigma$ corresponding to $\xfix$ is not marked and that $\xfix$ 
lies over a point $P \in \cP$. Since $\xfix$ is smooth we are in a formal-locally toric situation. Indeed, after a
suitable refinement of the invariant covering, we can assume that $\xfix$ is contained in an affine open subset $\TV(\cD^{\sigmafix})
\subset \TV(\fan)$ for a p-divisor $\cD^{\sigmafix}$ with locus contained in $(Y \setminus \cP) \cup \{P\}$ and tailfan $\sigmafix$.
According to \cite[Theorem 3.3]{candiv}, we have that $\big(\TV(\cD^{\sigmafix}),\xfix \big)$ is formally isomorphic to the smooth
affine toric variety $\big(\TV(\deltafix),\orb(\deltafix)\big)$ with
\[\deltafix = \ovl{ \bQ_{\geq 0} \cdot \big(\{1\} \times \cD^{\sigmafix}_{P})} \subset \bQ_{\geq 0} \times N_\bQ\,.\]
The rays of $\deltafix$ are given through the vertices of $\cD^{\sigmafix}_P$ in height $1$ and the rays of the tailcone $\sigmafix$
in height $0$. The admissible flag then arises as in the toric setting by an enumeration of the rays of $\deltafix$, cf.\
(\ref{subsec:toricOB}).\\[1ex]
$\mathbf{T_2}$ \; We assume that the maximal cone $\sigmafix \in \Sigma$ corresponding to $\xfix$ is marked. Since $\xfix$ is smooth
we are in a Zariski-locally toric situation. Indeed, according to \cite[Proposition 3.1]{candiv} we have an affine open $T$-invariant
subset $\TV(\cD^{\sigmafix}) \subset \TV(\fan)$ such that
\[\cD^{\sigmafix} \cong \cD^{\sigmafix}_{P_1} \otimes [P_1] + \cD_{P_2}^{\sigmafix} \otimes [P_2]\,.\]
Thus, after adding a principal p-divisor, we may assume that $\cD^{\sigmafix} \in \fan$ is equal to the r.h.s. Hence, we see that
$\TV(\cD^{\sigmafix})$ is isomorphic to the smooth affine toric variety $\TV(\deltafix)$ with
\[\deltafix = \ovl{ \bQ_{\geq 0} \cdot \big(\{1\} \times \cD^{\sigmafix}_{P_1} \; \cup \; \{-1\} \times \cD^{\sigmafix}_{P_2}\big)}
\subset \bQ \times N_\bQ\,.\]
The rays of $\deltafix$ are given through the vertices of $\cD^{\sigmafix}_{P_1}$ and $\cD^{\sigmafix}_{P_2}$. In particular, we
have a natural upgrade of the torus action. We now construct an admissible flag as in the toric setting by numbering the rays of
$\deltafix$.

\begin{remark}
a) If the maximal cone $\sigmafix$ corresponding to the smooth fixed point $\xfix$ is marked then we must have that $C=\bP^1$
(see \cite[Proposition 3.1]{candiv}).\\
b) Considering a toric variety $\TV(\Sigma)$ as a complexity-one $T$-variety via the downgrade method we presented in
(\ref{subsec:toricDown}), it is not hard to check that the admissible invariant flags constructed above comprise those which are
invariant under the original (big) torus action.\\
c) Note that the existence of one admissible general flag implies the existence of a one parameter family of these since
$Q$ may be chosen from $\bP^1 \setminus \cP$.
\end{remark}

The subsequent lemma by Jos\'{e} Gonz\'{a}lez (cf.\ \cite[Lemma 4.5]{obToricVB}) is an important ingredient for the computation of
Okounkov bodies in $T$-invariant settings.

\begin{lemma}
Let $X$ be an affine $T$-variety together with an admissible flag $Y_\bullet \,: X = Y_0 \supset \dots \supset Y_d$  of normal
$T$-invariant subvarieties such that $Y_{i+1} = \divisor h_{u_i} \subset Y_i$ for a rational semi-invariant function $h_{u_i}$,
$(1 \leq i \leq d-1)$, $u_i \in M$. For a rational function $g \in \bK(X)$ that decomposes as $g = \sum g_{u_j}$ into homogeneous
components with respect to elements $u_j \in M$, we have that $\nu_{Y_\bullet}(g) \in \{\nu_{Y_\bullet}(g_{u_j})\}$.
\end{lemma}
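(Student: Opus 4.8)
The plan is to regard $\nu_{Y_\bullet}$ as a genuine valuation and to exclude cancellation among the homogeneous pieces. By the construction recalled in (\ref{subsec:OC}) together with properties (1) and (2), the map $\nu_{Y_\bullet}$ extends to a valuation $\nu_{Y_\bullet}\colon \bK(X)^\ast \to (\bZ^d,\leq_{\mathrm{lex}})$; in particular it is additive on products and satisfies $\nu_{Y_\bullet}(g_1+g_2)\geq\min\{\nu_{Y_\bullet}(g_1),\nu_{Y_\bullet}(g_2)\}$, with equality whenever the two values differ. Applied to the finite sum $g=\sum_j g_{u_j}$ this gives $\nu_{Y_\bullet}(g)\geq\min_j\nu_{Y_\bullet}(g_{u_j})$. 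Since $\leq_{\mathrm{lex}}$ is a total order, it therefore suffices to show that the values $\nu_{Y_\bullet}(g_{u_j})$ are pairwise distinct: then the minimum is attained by a unique index $j_0$, and writing $g=g_{u_{j_0}}+\sum_{j\neq j_0}g_{u_j}$ with a second summand of strictly larger value yields $\nu_{Y_\bullet}(g)=\nu_{Y_\bullet}(g_{u_{j_0}})\in\{\nu_{Y_\bullet}(g_{u_j})\}$.

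As the components $g_{u_j}$ carry pairwise distinct weights, the required distinctness follows from the key claim that $\nu_{Y_\bullet}(g_u)\neq\nu_{Y_\bullet}(g_{u'})$ whenever $u\neq u'$. By multiplicativity I may rewrite the difference as $\nu_{Y_\bullet}(g_u)-\nu_{Y_\bullet}(g_{u'})=\nu_{Y_\bullet}(G)$, where $G:=g_u/g_{u'}\in\bK(X)^\ast$ is again semi-invariant, of weight $u-u'$. Hence the whole statement reduces to the single assertion: a semi-invariant rational function $G$ with $\nu_{Y_\bullet}(G)=0$ is invariant.

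To prove this I exploit that $Y_d=\xfix$ is a smooth $T$-fixed point. The condition $\nu_{Y_\bullet}(G)=0$ means precisely that $G$ is a unit for the valuation $\nu_{Y_\bullet}$, whose residue field is $\bK$; concretely, $\ord_{Y_1}(G)=0$ lets $G$ restrict to a nonzero rational function on $Y_1$, then $\ord_{Y_2}=0$ lets this restrict further to $Y_2$, and so on down the flag, so that $G$ is regular and nonvanishing at $\xfix$ with value $G(\xfix)\in\bK^\ast$. Semi-invariance of weight $w:=u-u'$ reads $t\cdot G=\chi^{w}(t)\,G$ for all $t\in T$; evaluating at the fixed point $\xfix$ and using that $T$ acts trivially on the residue $\bK$ gives $G(\xfix)=\chi^{w}(t)\,G(\xfix)$. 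Since $G(\xfix)\neq0$, this forces $\chi^{w}(t)=1$ for all $t\in T$, and effectiveness of the action yields $w=0$, as claimed.

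The one step requiring care — and the only place where the torus action enters decisively — is this last one: the passage from $\nu_{Y_\bullet}(G)=0$ to a well-defined nonzero residue $G(\xfix)$, and the observation that $T$-invariance of that residue converts vanishing of the valuation into vanishing of the weight. Verifying that $\nu_{Y_\bullet}$ is a bona fide valuation on $\bK(X)$ and the remaining bookkeeping are routine.
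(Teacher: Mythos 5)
Your proof is correct, but it takes a genuinely different route from the argument behind this lemma: the paper itself offers no proof, quoting Gonz\'alez \cite[Lemma 4.5]{obToricVB}, whose argument is an induction along the flag. That induction uses the hypothesis that each $Y_{i+1}$ is cut out in $Y_i$ by a \emph{semi-invariant} equation $h_{u_i}$: the components of $g$ of minimal order along $Y_1$, divided by $h_{u_1}^{\nu_1}$ and restricted to $Y_1$, become nonzero semi-invariant functions of pairwise distinct weights; since nonzero semi-invariants of distinct weights are linearly independent they cannot cancel, so $\nu_1(g)=\min_j \nu_1(g_{u_j})$ and the restricted sum is again of the required shape, closing the induction. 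You instead prove a strictly stronger separation statement: the flag valuation takes pairwise distinct values on nonzero semi-invariants of distinct weights. By multiplicativity this reduces to showing that a semi-invariant $G$ with $\nu_{Y_\bullet}(G)=0$ has weight zero, which your residue-at-the-fixed-point computation does establish: since the $Y_i$ are $T$-invariant, each restriction map is $T$-equivariant, the induced action on the residue field $\kappa(Y_d)=\bK$ is trivial because $Y_d$ is a fixed closed point, and a nonzero residue then forces $\chi^w\equiv 1$. Your route buys two things: it never uses semi-invariance of the local equations (invariance of the $Y_i$ suffices), and it shows the minimum $\min_j\nu_{Y_\bullet}(g_{u_j})$ is attained at a \emph{unique} component, i.e.\ the valuation separates the weight spaces, which is exactly the property exploited in semigroup arguments of Kaveh--Khovanskii type. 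Gonz\'alez's induction, by contrast, stays within elementary order-of-vanishing computations and identifies the minimizing component coordinate by coordinate.

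One small correction: at the end you deduce $w=0$ from $\chi^w\equiv 1$ by ``effectiveness of the action.'' Effectiveness is neither needed nor the right reason (indeed the action on the intermediate $Y_i$ need not be effective); the implication holds simply because $w\mapsto\chi^w$ identifies $M$ with the character group of $T$, so $\chi^w$ is the trivial character only for $w=0$. Also, when you promote $\nu_{Y_\bullet}$ to a valuation on $\bK(X)^*$, the local equations should all be chosen on one neighborhood of $Y_d$, as in the paper's construction, to make the value independent of choices; this is routine, but it is where admissibility enters.
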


It is indeed not hard to check that our construction of general and toric flags allows for a reduction to the setting of the above
lemma.

\begin{remark}
\label{rmk:flagsGonzalez}
In \cite[Section 4.1]{obToricVB}, Jos\'{e} Gonz\'{a}lez gave a construction for a $T$-invariant flag on projectivized rank two toric
vector bundles over smooth projective toric varieties. Using the description of these projectivized bundles in terms of p-divisors
(cf.\ \cite[Proposition 8.4]{tvar2}), one sees that those flags are of type $\mathbf{T_2}$.
\end{remark}

\subsection{Okounkov Bodies for General Flags}
\label{subsec:computationOBgeneral}

Let us recall some notions which were introduced in (\ref{subsec:globalSections}). Given a $T$-invariant Cartier divisor $D_h$
on $\TV(\fan)$ we denote by $D_{\ul{h}}$ the Cartier divisor which is defined on $\TV(\tail \fan)$ via the linear part of $h$.
Furthermore, we have the map
\[\hstar_P: \Box_h \to \bQ\,, \quad u \mapsto \minvert (u- h_P)\,,\]
for every point $P \in Y$. For the ease of later computations, we introduce the following notion. A $T$-invariant divisor $D_h$
on $\TV(\fan)$ is called \emph{normalized} with respect to the general flag $Y_\bullet$ if $h_Q|_{\cD_Q^{\sigmafix}} \equiv 0$.
In particular, this implies that $\hstar_Q \equiv 0$.

\begin{theorem}
\label{thm:localOBgeneral}
Let $\TV(\fan)$ be a projective $T$-variety of complexity one together with a general flag $Y_\bullet$. Consider a $T$-invariant
big divisor $D_h$ on $\TV(\fan)$ which is normalized with respect to $Y_\bullet$. Denote by $D_{\ul{h}}$ the associated invariant
divisor on the toric variety $Y_1 = \TV(\Sigma)$, where $\Sigma = \tail \fan$, and consider the induced flag $Y_{\geq 1}$ on $Y_1$. 
Then we have that
\[\OB(D_h) = \Big\{(x,w) \in \bR \times \bR^{d-1} \, \big| \; w \in \Delta_{Y_{\geq 1}}(D_{\ul{h}}), \; 0 \leq x \leq \degree
\hstar(\phi_{\bR}^{-1}(w))\big\},\]
where $\phi_{\bR}$ is equal to the map which was introduced in the toric setting of (\ref{subsec:toricOB}). Moreover,
$\Delta_{Y_{\geq 1}}(D_{\ul{h}}) = \phi_\bR(\Box_{\ul{h}})$ denotes the Okounkov body of $D_{\ul{h}}$ on $Y_1$ with respect to the
flag $Y_{\geq 1}$. In particular, $\OB(D_h)$ is a rational polytope.
\end{theorem}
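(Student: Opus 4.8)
The plan is to compute the Okounkov body directly from its definition by understanding the valuation vector $\nu_{Y_\bullet}(s)$ for homogeneous sections, exploiting the fact that the first flag element $Y_1 = r(\pi^{-1}(Q))$ is a general fiber while the remaining flag elements live inside the toric variety $Y_1 \cong \TV(\Sigma)$. The key structural observation is that the valuation splits: the first coordinate $\nu_1$ measures vanishing along the general fiber $Y_1$, while the remaining coordinates $(\nu_2,\dots,\nu_d)$ reproduce exactly the toric valuation on $Y_1$ with respect to the induced flag $Y_{\geq 1}$. First I would invoke the lemma of Gonz\'alez cited just above the theorem: since the flag consists of $T$-invariant subvarieties cut out by semi-invariant rational functions, the valuation of any section is attained on one of its homogeneous components, so it suffices to understand $\nu_{Y_\bullet}$ on weight spaces $\Gamma(\TV(\fan),\cO(mD_h))_u$ for $u \in M$.

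The heart of the computation is to identify the two pieces. For the toric directions, I would use the description of global sections from (\ref{subsec:globalSections}), namely $\Gamma(\TV(\fan),\cO(D_h))_u = \Gamma(\Loc \fan, \cO(\hstar(u)))$, together with Proposition \ref{prop:obToricVar}(1), which gives $\OB(D_{\ul{h}}) = \phi_\bR(\Box_{\ul{h}})$ for the toric variety $Y_1$. A homogeneous section of weight $u$ restricts on $Y_1$ to a character $\chi^u$, whose toric valuation is $\phi(u)$; hence the projection of $\nu_{Y_\bullet}(s)$ onto the last $d-1$ coordinates is $\phi(u)$, and the weight $u$ ranges over $\Box_h$, giving the $w \in \Delta_{Y_{\geq 1}}(D_{\ul{h}})$ constraint after passing to the closed convex hull. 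For the first coordinate $x = \nu_1$, the vanishing order of a weight-$u$ section along the general fiber $Y_1$ is controlled by $\hstar(u) = \sum_P \hstar_P(u)\,P$ evaluated on the curve $C$: a section of weight $u$ corresponds to a rational function on $C$ lying in $\Gamma(C, \cO(\hstar(u)))$, and the maximal order of vanishing at the point $Q$ of such a function is $\deg \hstar(u) = \degree \hstar(u)$ (since on $\bP^1$ the dimension of this space is $\deg + 1$ and one can prescribe vanishing up to that order). Because $D_h$ is normalized so that $\hstar_Q \equiv 0$, the point $Q$ itself contributes nothing to the divisor $\hstar(u)$, so the achievable vanishing orders at $Q$ run over $0 \leq x \leq \degree \hstar(u)$. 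Writing $u = \phi_\bR^{-1}(w)$ then yields the stated bound $0 \leq x \leq \degree \hstar(\phi_\bR^{-1}(w))$.

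Assembling these, the graded semigroup points $(\nu_{Y_\bullet}(s), m)$ for $s \in \Gamma(\cO(mD_h))_u$ are, after scaling by $1/m$ and taking the closed convex hull, exactly the region described in the statement, where $\degree \hstar$ is a concave piecewise affine function (being a minimum of affine functions indexed by vertices, summed over $P$) over the rational polytope $\Box_h$. Concavity of $\degree \hstar$ guarantees that the region under its graph is convex, so the closure in Definition \ref{def:OB} contributes no new points beyond those coming from the semigroup; rationality of $\Box_h$ and integrality of the graph of each $\hstar_P$ (from Definition \ref{def:weightPolytopeCartier} and the integrality condition on divisorial polytopes) ensure the resulting polytope is rational. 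The main obstacle I anticipate is the first step: rigorously justifying that the vanishing order $\nu_1$ of a weight-$u$ section along the general fiber $Y_1$ equals the order of vanishing at $Q$ of an associated rational function on $C$, and that this order can be made to range over the full interval up to $\degree \hstar(u)$. This requires carefully tracking how the section $s$ is realized as $f \chi^u$ with $f \in \bK(C)$, applying the principal-divisor formula from (\ref{subsec:divCL}) to read off $\ord_{Y_1}$ in terms of $\ord_Q f$, and confirming that after normalization the extremal vanishing orders are simultaneously attainable together with the prescribed toric valuation in the $Y_{\geq 1}$ directions — precisely where Gonz\'alez's lemma is needed to guarantee the combined valuation is genuinely realized by a single homogeneous section rather than being lost to cancellation.
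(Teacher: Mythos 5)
Your proposal is correct and follows essentially the same route as the paper's proof: reduce to homogeneous sections $s = f\chi^u$ via Gonz\'alez's lemma, identify $\nu_{Y_\bullet}(s) = (\ord_Q f, \phi(u))$ using normalization, bound $\ord_Q f$ by the degree of $(mh)^*(u)$ on $\bP^1$, realize the extremal vanishing orders, and assemble by convexity and closure. The one place the paper is more careful is precisely the obstacle you flagged: for fixed $m$ the maximal attainable vanishing order is $\sum_P \floor{(mh)^*_P(u)}$ rather than $m \degree \hstar(u/m)$, so the paper first passes to a multiple $N$ clearing all denominators (making the round-downs exact) and then achieves $\degree\hstar(u)$ only as a $\limsup$ over sections of weights $k_i N u$.
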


\begin{proof}
Note that 
\[m \, \hstar_P \big(\frac{1}{m}u \big) = (mh)_P^*(u)\, \textnormal{ for } m \in \bZ_{\geq 1}\,,\; u \in \Box_h\,.\]
Let us first prove the inclusion ``\,$\subset$\,''. It is enough to show that $\frac{1}{m}\nu(s)$ is an element of the r.h.s.\
for any homogeneous non-zero section $s \in \Gamma(\TV(\fan),\cO(D_{mh}))$ and any $m \geq 1$. We write $s = f\chi^u$ where 
$u = \phi^{-1}_{\bR}(w)$ denotes the weight of $s$ and $f \in \bK(\bP^1)$. Due to convexity and the results in the toric setting
for $Y_{i \geq 1}$, it is enough to show that 
\[\degree \hstar\big(\frac{1}{m}u \big) \geq \frac{1}{m}\nu_1(s) \geq 0\,.\]
Since $D_h$ is normalized we have that $\hstar_Q \equiv 0$ and $\nu_1(s) = \ord_Q(f) \geq 0$. Furthermore, $\nu_1(s)$ is bounded
above by $\sum_{P \in C}\floor{(mh)_P^*(u)}$. Thus, we arrive at
\[m \degree \hstar \big(\frac{1}{m}u \big) = \degree \,(mh)^*(u) \geq \sum_{P \in C} \floor{(mh)^*_P (u)} \geq \nu_1(s) \geq 0 \,.\]

For the other inclusion, consider a point $(x,w) \in \bQ \times \bQ^{d-1}$ of the r.h.s., i.e.\
\[\Delta_{Y_{\geq 1}}(D_{\ul{h}}) \ni w = \phi_{\bQ}(u)\] 
for some $u \in \Box_h$. Due to convexity and the fact that $x \leq \degree \hstar(u)$ with $u = \phi_\bQ^{-1}(w)$ it is enough to
show that $(\degree \hstar(u),w) \in$ l.h.s.

\noindent
Since we only have finitely many non-trivial slices, each of which is a finite subdivision of $N_\bQ$, there exists a natural number
$N$ such that $\Box_{Nh} \ni Nu \in M$, and $(Nh)_P^*(Nu)$ is an integer for every $P \in \cP$. So the round-down is no longer necessary
and we have
\[N \degree \hstar(u) = N \sum_{P \in C}h^*_P(u) = \sum_{P \in C} (Nh)^*_P \big( Nu \big) = 
\sum_{P \in C} \floor{(Nh)^*_P \big( Nu \big)} \geq 0. \]
But then we can find a sequence of positive integral multiples $k_iN$ of $N$ and homogeneous sections
$s_i \in \Gamma(\TV(\fan),\cO(D_{k_iNh}))$ of weight $k_iNu$ such that
\[\limsup_{i \to \infty}\frac{\nu_1(s_i)}{k_iN} = \deg \hstar(u)\]
which completes the proof.
\end{proof}

Thus, Theorem \ref{thm:localOBgeneral} also relates divisorial polytopes $(\Psi,\Box,C)$ to Okounkov bodies $\OB(D_{\Psi^*})$,
cf.\ (\ref{subsec:divisorialPolytopes}).

\begin{corollary}
\label{cor:localOBgeneralDivPol}
Fixing a general flag $Y_\bullet$ in the polarized complexity-one $T$-variety $\big(\TV(\Xi(\Psi)),\cO(D_{\Psi^*})\big)$ related to the
divisorial polytope $(\Psi,\Box,C)$, the associated Okounkov body $\OB(D_{\Psi^*})$ arises, up to translation, from the convex hull
of the graph of the function $\sum_{P \in C} \Psi_P$ over the polytope $\Box$.
\end{corollary}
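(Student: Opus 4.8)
The plan is to apply Theorem~\ref{thm:localOBgeneral} and then translate its conclusion into the language of divisorial polytopes. Recall that under the correspondence of (\ref{subsec:divisorialPolytopes}), the polarized pair $\big(\TV(\Xi(\Psi)),\cO(D_{\Psi^*})\big)$ is built from a divisorial polytope $(\Psi,\Box,C)$, where $\Box \subset M_\bQ$ is the weight polytope and each $\Psi_P \colon \Box \to \bQ$ is a concave piecewise affine linear coordinate function. First I would identify, via the correspondence $h \leftrightarrow D_h$ of (\ref{subsec:CD}), the divisorial support function $h$ attached to $D_{\Psi^*}$ and observe that $\Box_h = \Box$ and that the functions $\hstar_P$ agree with the $\Psi_P$ up to the conventions fixed in the definition of a divisorial polytope. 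Concretely, the Legendre-type duality between a concave function on $\Box \subset M_\bQ$ and a piecewise linear support function on $N_\bQ$ should give $\hstar_P(u) = \Psi_P(u)$ for $u \in \Box$, so that $\minvert(u - h_P) = \Psi_P(u)$.

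With this dictionary in place, the second step is purely a matter of reading off Theorem~\ref{thm:localOBgeneral}. That theorem states
\[\OB(D_h) = \big\{(x,w) \,\big|\, w \in \Delta_{Y_{\geq 1}}(D_{\ul{h}}),\; 0 \leq x \leq \degree\hstar(\phi_\bR^{-1}(w))\big\},\]
and since $\degree\hstar(u) = \sum_{P \in C} \hstar_P(u) = \sum_{P \in C}\Psi_P(u)$, the upper boundary of $\OB(D_h)$ in the $x$-direction is exactly the graph of $\sum_{P} \Psi_P$ over $\Box$, transported by the linear isomorphism $\phi_\bR$ to $\Delta_{Y_{\geq 1}}(D_{\ul h}) = \phi_\bR(\Box_{\ul h})$. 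Because $\phi_\bR$ is a linear bijection, the region under the graph of $\sum_P \Psi_P$ over $\Box$ and the region under the graph of $\degree\hstar \circ \phi_\bR^{-1}$ over $\phi_\bR(\Box)$ differ only by an invertible linear change of the $w$-coordinates, i.e.\ they are linearly (in particular affinely) equivalent convex bodies.

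The third step addresses the phrase ``up to translation.'' The function $\hstar_P = \minvert(u - h_P)$ depends on the chosen representative of $h$, and different representatives (differing by an element $\SF(u)$ for $u\in M$ or by the normalization $h_Q|_{\cD_Q^{\sigmafix}} \equiv 0$ imposed in the theorem) shift each $\Psi_P$ by an affine function and hence shift $\sum_P \Psi_P$ by a global affine function on $\Box$. The effect on the convex hull of the graph is a shear, which becomes a genuine translation once one records that the Okounkov body is well defined only up to the normalization built into Theorem~\ref{thm:localOBgeneral}; this is precisely the ``up to translation'' clause, and I would make it explicit by comparing the normalized $h$ with the support function directly associated to $(\Psi,\Box,C)$.

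The main obstacle I anticipate is \textbf{not} the geometry but the bookkeeping of conventions: pinning down exactly how the concavity convention for $\Psi_P$ (stated in the Preliminaries) matches the $\minvert$ convention defining $\hstar_P$, and verifying that $\Box_h = \Box$ with the correct orientation so that the duality $\hstar_P = \Psi_P$ holds on the nose rather than up to a sign or a reflection. Once that identification is secured, the corollary follows immediately from Theorem~\ref{thm:localOBgeneral} by substituting $\sum_P \Psi_P$ for $\degree\hstar$ and invoking the linearity of $\phi_\bR$.
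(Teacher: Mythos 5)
Your first two steps are exactly the paper's (implicit) argument: Corollary \ref{cor:localOBgeneralDivPol} is stated there as an immediate consequence of Theorem \ref{thm:localOBgeneral}, obtained precisely by the biduality $\hstar_P = \Psi_P$, $\Box_{\ul{h}} = \Box$ coming from the correspondence of (\ref{subsec:divisorialPolytopes}), and by transporting the picture through $\phi_\bR$, which is a unimodular isomorphism because $\sigmafix$ is a smooth cone of maximal dimension. One caveat on wording: ``the convex hull of the graph'' must be read as the convex hull of the graph \emph{together with} the base $\Box\times\{0\}$, i.e.\ as the region $\{(u,x)\,:\,u\in\Box,\ 0\le x\le \sum_{P}\Psi_P(u)\}$ that your second step actually produces; the hull of the graph alone can be strictly smaller, since $\sum_P\Psi_P$ need not vanish at every vertex of $\Box$ (in Example \ref{ex:hirzebruchOB} the body $\Delta_{Z^1_\bullet}(\cL)$ contains $(0,0)$, which does not lie in the convex hull of the graph by itself).

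The genuine flaw is in your third step, the only place where the ``up to translation'' clause is justified. Replacing $h$ by $h-\SF(u_0)$, $u_0\in M$, does \emph{not} change each $\hstar_P$ by an affine function, and the induced effect is \emph{not} a shear: one computes $\Box_{h-\SF(u_0)}=\Box_h-u_0$ and $(h-\SF(u_0))^*_P(u)=\hstar_P(u+u_0)$, so the weight polytope and all graphs are translated horizontally with \emph{unchanged} values; and twisting by $\SF(\divisor f)$ for $f\in\bK(C)^*$ shifts each $\hstar_P$ by the constant $-\ord_P f$, and these constants sum to zero, leaving $\degree\hstar$ untouched. Your appeal to ``the Okounkov body is well defined only up to the normalization'' is also backwards: Okounkov bodies of linearly equivalent divisors with respect to the same flag coincide on the nose, since for $D'=D+\divisor(g)$ the map $f\mapsto fg$ identifies $\Gamma(X,\cO_X(mD'))$ with $\Gamma(X,\cO_X(mD))$ and preserves the effective divisors $\divisor(f)+mD'=\divisor(fg)+mD$ from which all valuation vectors are computed. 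Hence $\OB(D_{\Psi^*})=\OB(D_{h'})$ exactly, where $h'=\Psi^*-\SF(u_0)$ is the normalized representative, and Theorem \ref{thm:localOBgeneral} exhibits this common body as the $(\id\times\phi_\bR)$-image of the horizontally translated region, i.e.\ as a translate by $(0,-\phi(u_0))$ of the $(\id\times\phi_\bR)$-image of the convex hull in question. The distinction matters: if renormalization really acted by a shear, the corollary as stated would be false, so this step cannot be waved away --- but the correct mechanism is simpler than the one you propose, and with it the proof closes.
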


\begin{remark}
a) Fixing a big $T$-invariant divisor $D_h$ together with an enumeration of the rays of $\sigmafix$, Theorem \ref{thm:localOBgeneral}
also shows that all resulting Okounkov bodies are identical since there is no dependence on $Q \in C \setminus \cP$. \\
b) Recall that the correspondence between divisorial polytopes and polarized projective complexity-one $T$-varieties generalizes
the correspondence between lattice polytopes and polarized projective toric varieties. In the same vein Corollary
\ref{cor:localOBgeneralDivPol} generalizes Proposition \ref{prop:obToricVar}. 
\end{remark}

\subsection{Okounkov Bodies for Toric Flags}
\label{subsec:computationOBtoric}

Let $\TV(\fan)$ be a projective $T$-variety of complexity one together with a fixed toric flag $Y_\bullet$. A $T$-invariant
divisor $D_h$ is called \emph{normalized} with respect to $Y_\bullet$ if $h|_{\cD^{\sigmafix}} \equiv 0$. 

Moreover, we introduce the map 
\[c_{Y_\bullet}: \tcadiv \big(\TV(\fan) \big)_\bQ \to \bQ^d, \quad c_{Y_\bullet}(D_h)_i = \coeff_{D_i} D_h \,, \]
where $D_i$ is the Weil divisor in $\TV(\fan)$ which is associated to the i'th ray of the cone $\delta_{\textnormal{fix}}$
arising from $Y_\bullet$.

\subsubsection{Using a Toric Flag of Type $\mathbf{T_1}$}
\label{subsubsec:OBtoricFlagATwo}
Recall that $\xfix$ lies over a point $P \in \cP$ and the associated cone $\sigmafix$ is not marked. Embedding $\cD^{\sigmafix}_P$
into $\{1\} \times N_{\bQ}$, we obtain a smooth cone $\deltafix \subset \bQ \times N_{\bQ}$. Since $D_h$ is normalized, a global
section $f\chi^u \in \Gamma(\TV(\fan),\cO(D_h))_u$ turns into the rational function of weight $[\ord_P f,u] \in \bZ \times M$ where
$\bZ \times M$ is the character lattice of the big torus acting upon $\TV(\deltafix)$. In addition, we define
\[\phi: \bZ \times M \to \bZ^d, \quad \phi\big([\ord_P f, u]\big) = \Big(\big\langle [\ord_P f,u],n_i
\big\rangle\Big)_{1 \leq i \leq d}\,,\]
where $n_i$ is the primitive generator of the $i$-th ray of $\deltafix$ fixed by the flag $Y_\bullet$. Note the similarity to the map
used in (\ref{subsec:toricOB}). The next statement now follows easily from the toric discussion and the definition of
$\nu_{Y_\bullet,D_h}$.

\begin{lemma}
\label{lem:valToricFlagATwo}
Let $\TV(\fan)$ and $Y_\bullet$ be as above. For a $T$-invariant divisor $D_h$ on $\TV(\fan)$ we have
\[\nu_{Y_\bullet,D_h}(f\chi^u) = \phi\big([\ord_P f,u]\big) + c_{Y_\bullet}(D_h)\,,\]
where the last summand vanishes if $D_h$ is normalized.
\end{lemma}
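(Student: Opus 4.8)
The plan is to transport the whole computation into the toric model furnished by the type $\mathbf{T_1}$ flag and then to read off the valuation from the standard toric formula. First I would observe that each coordinate of the valuation vector $\nu_{Y_\bullet,D_h}(s)$ is obtained by successively taking orders of vanishing of $s$ (and of its restrictions) along the members of the flag in a neighbourhood of the centre $Y_d = \xfix$; for our semi-invariant $s = f\chi^u$ these orders are insensitive to passing to a formal-local model at $\xfix$, exactly as in the toric reduction of \cite[Section 6]{okounkovBody}. The construction of $\mathbf{T_1}$ together with \cite[Theorem 3.3]{candiv} provides a formal isomorphism $(\TV(\cD^{\sigmafix}),\xfix) \cong (\TV(\deltafix),\orb(\deltafix))$ which carries the flag $Y_\bullet$ to the standard toric flag on the smooth affine toric variety $\TV(\deltafix)$ attached to the chosen enumeration $n_1,\ldots,n_d$ of the rays of $\deltafix$. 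Hence it suffices to compute the valuation of the transported section on $\TV(\deltafix)$.

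Next I would identify that transported section. The enlarged torus acting on $\TV(\deltafix)$ has character lattice $\bZ \times M$, the extra factor $\bZ$ arising from the height-one embedding of $\cD^{\sigmafix}_P$ in $\bQ_{\geq 0} \times N_\bQ$. Under this upgrade of the torus action the semi-invariant $f\chi^u$, with $f \in \bK(C)$ and $u \in M$, becomes a monomial of weight $[\ord_P f, u]$: a local uniformizer $t_P$ at $P$ is an eigenfunction for the new one-parameter subgroup, so writing $f = t_P^{\ord_P f} \cdot (\mathrm{unit})$ shows that $f$ contributes $\ord_P f$ in the new coordinate while $\chi^u$ keeps its weight $u$.

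Finally I would apply the toric valuation computation recalled in (\ref{subsec:toricOB}) (cf.\ Proposition \ref{prop:obToricVar}). For the smooth cone $\deltafix$ with ray generators $n_1,\ldots,n_d$ and the invariant divisor $D$ representing $D_h$ in the toric chart, a monomial $\chi^w$ viewed as a section of $\cO(D)$ satisfies
\[\nu_i(\chi^w) = \mul{w}{n_i} + \coeff_{D_i}(D)\,,\]
since $\divisor(\chi^w) + D$ has coefficient $\mul{w}{n_i} + \coeff_{D_i}(D)$ along $D_i$ and the successive restrictions of the flag peel off these coefficients one at a time. Putting $w = [\ord_P f, u]$ yields precisely $\phi([\ord_P f, u]) + c_{Y_\bullet}(D_h)$. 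If in addition $D_h$ is normalized, i.e.\ $h|_{\cD^{\sigmafix}} \equiv 0$, then $D$ restricts trivially to the chart $\TV(\deltafix)$, so $\coeff_{D_i}(D) = 0$ for all $i$ and the summand $c_{Y_\bullet}(D_h)$ drops out.

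The step I expect to be the main obstacle is the middle one, namely verifying that the transported monomial genuinely carries the weight $[\ord_P f, u]$; this amounts to matching the local uniformizer at $P$ with the one-parameter subgroup created by the height-one slice of $\deltafix$, and to checking that the formal isomorphism of \cite[Theorem 3.3]{candiv} is equivariant for this upgraded torus. Once this weight bookkeeping is settled, the remaining steps are the verbatim toric computation together with the trivial-restriction observation under normalization.
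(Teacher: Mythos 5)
Your proposal is correct and follows essentially the same route as the paper: the paper's (very brief) argument likewise passes to the formal-local toric model $\TV(\deltafix)$ furnished by the $\mathbf{T_1}$ construction, identifies $f\chi^u$ with a monomial of weight $[\ord_P f,u]$ for the upgraded torus, and then invokes the standard toric valuation computation, with $c_{Y_\bullet}(D_h)$ accounting for the coefficients of $D_h$ along the flag divisors. Your write-up merely makes explicit the weight bookkeeping and the coefficient-peeling step that the paper dismisses as following ``easily from the toric discussion and the definition of $\nu_{Y_\bullet,D_h}$.''
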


\begin{proposition}
\label{prop:localOBtoricATwo}
Let $\TV(\fan)$ be a projective $T$-variety of complexity one together with a toric flag $Y_{\bullet}$ of type $\mathbf{T_1}$
and a normalized big $T$-invariant divisor $D_h$. The Okounkov body $\OB(D_h)$ then results from a translation of the rational polytope
\[W(h) \ldef \ovl{\Big \{(x,u) \in \bQ \times \Box_{\ul{h}} \;\big|\; 0 \leq x + \hstar_P(u) \leq \deg \hstar(u) \Big\}} \subset \bR
\times \bR^{d-1}.\]
which is induced by the ordered set of primitive generators of the rays of $\deltafix$, i.e.\ an element $w \in W(h)$ gives us
\[\big(\bangle{w,n_1},\dots,\bangle{w,n_d}\big) \in \bR^d.\]
In particular, $\OB(D_h)$ is again a rational polytope.
\end{proposition}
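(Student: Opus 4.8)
The plan is to follow the blueprint of Theorem~\ref{thm:localOBgeneral}, now exploiting that a flag of type $\mathbf{T_1}$ puts us in a formal-locally toric situation for the upgraded torus acting on $\TV(\deltafix)$. Since $D_h$ is normalized, Lemma~\ref{lem:valToricFlagATwo} gives $\nu_{Y_\bullet,D_h}(f\chi^u)=\phi\big([\ord_P f,u]\big)$, so the valuation of every homogeneous section is obtained by applying the linear isomorphism $\phi_\bR\colon w\mapsto(\bangle{w,n_1},\dots,\bangle{w,n_d})$ to the pair $[\ord_P f,u]\in\bZ\times M$, exactly as $\phi_\bR$ enters Proposition~\ref{prop:obToricVar}. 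By the lemma of Gonz\'alez stated above it suffices to run through homogeneous sections, and since $\phi_\bR$ is a rational linear isomorphism it commutes with taking closed convex hulls. Hence the proposition reduces to showing that the rescaled pairs $\tfrac1m[\ord_P f,u]$ have closed convex hull equal to $W(h)$; then $\OB(D_h)=\phi_\bR(W(h))$ is the asserted body.

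The decisive step is the determination, for each weight, of the range of $\ord_P f$. By (\ref{subsec:globalSections}) the weight-$u$ part of $\cO(D_{mh})$ is $\Gamma\big(\Loc\fan,\cO_{\Loc\fan}((mh)^*(u))\big)$, so the admissible $f$ are exactly the rational functions on $C=\bP^1$ with $\divisor(f)+(mh)^*(u)\geq 0$. The pole condition at $P$ forces $\ord_P f\geq-(mh)^*_P(u)$, while the relation $\deg\divisor(f)=0$ on $\bP^1$, combined with the pole conditions at the remaining points, yields $\ord_P f\leq\sum_{Q\neq P}(mh)^*_Q(u)=\deg(mh)^*(u)-(mh)^*_P(u)$. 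Rescaling by $1/m$ and using the homogeneity relation $m\,\hstar_P(\tfrac1m u)=(mh)^*_P(u)$ from the proof of Theorem~\ref{thm:localOBgeneral}, both bounds pass in the limit to $0\leq x+\hstar_P(u)\leq\deg\hstar(u)$, which are precisely the inequalities cutting out $W(h)$.

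It remains to assemble these bounds. The inclusion $\OB(D_h)\subseteq\phi_\bR(W(h))$ is immediate, since every rescaled valuation $\tfrac1m\phi\big([\ord_P f,u]\big)=\phi_\bR\big(\tfrac1m\ord_P f,\tfrac1m u\big)$ has its first two coordinates satisfying the inequalities just derived. For the reverse inclusion one must realize the extreme orders: given a rational point $(x,u)\in W(h)$, choose $m$ so divisible that $mu\in M$ and all $(mh)^*_Q(mu)$ are integers. Then the divisor $(mh)^*(mu)$ on $\bP^1$ has non-negative degree, and because every divisor of degree zero on $\bP^1$ is principal one can prescribe $\divisor(f)$ to be any degree-zero divisor dominating $-(mh)^*(mu)$; choosing its coefficient at $P$ freely produces a section whose order at $P$ is an arbitrary integer in the allowed range. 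A $\limsup$ argument identical to the one closing Theorem~\ref{thm:localOBgeneral} then places $(x,u)$ in the closed convex hull, giving $\OB(D_h)=\phi_\bR(W(h))$. Finally $W(h)$ is a rational polytope, because $\hstar_P$ and $\deg\hstar$ are piecewise affine with rational data over the finite subdivision underlying $\fan$, and $\phi_\bR$ is a rational linear isomorphism, so $\OB(D_h)$ is a rational polytope as well.

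I expect the reverse inclusion to be the main obstacle: one has to genuinely produce rational functions on $\bP^1$ attaining the boundary orders at $P$ and to reconcile the integrality of $\ord_P f$ with rational (in the limit, arbitrary real) target values by clearing denominators. This is exactly where the hypothesis $C=\bP^1$ and the completeness of $\fan$ are used, and it is the technical heart of the argument.
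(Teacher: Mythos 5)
Your proof is correct and follows essentially the same route as the paper: the paper disposes of this proposition with the single line ``essentially analogous to the proof of Theorem \ref{thm:localOBgeneral}'', and your argument is precisely that analogy carried out --- the same two-inclusion scheme, with the bounds $0 \leq \ord_P f + (mh)^*_P(u) \leq \deg (mh)^*(u)$ coming from the pole condition at $P$ and the degree-zero relation on $\bP^1$, and the same denominator-clearing plus $\limsup$ step (made explicit via principality of degree-zero divisors on $\bP^1$) for the reverse inclusion, all transported to $\bR^d$ by $\phi_\bR$ via Lemma \ref{lem:valToricFlagATwo}.
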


\begin{proof}
The proof is essentially analogous to the proof of Theorem \ref{thm:localOBgeneral}.
\end{proof}

\subsubsection{Using a Toric Flag of Type $\mathbf{T_2}$}
\label{subsubsec:OBtoricFlagBTwo}
For the computation of the Okounkov body with respect to an admissible flag of type $\mathbf{T_2}$ we assume that
\[\cD^{\sigmafix} = \cD^{\sigmafix}_{P_1} \otimes [P_1] + \cD^{\sigmafix}_{P_2} \otimes [P_2]\,.\]
Embedding $\cD^{\sigmafix}_{P_1}$ in $\{1\} \times N_{\bQ}$ and $\cD^{\sigmafix}_{P_2}$ in $\{-1\} \times N_{\bQ}$, we obtain a
smooth cone $\deltafix \subset \bQ \times N_{\bQ}$. As $D_h$ is normalized, a global section $f\chi^u \in \Gamma(\TV(\fan),\cO(D_h))_u$
turns into the rational function of weight $[\ord_{P_1}f,u] =[-\ord_{P_2}f,u] \in \bZ \times M$. Again, the latter lattice is the
character lattice of the big torus that acts upon $\TV(\deltafix)$. With the very same notation as in the previous section, we define
\[\phi: \bZ \times M \to \bZ^d, \quad \phi\big([\ord_{P_1}f, u]\big) = \Big(\big\langle [\ord_{P_1}f,u],n_i
\big\rangle\Big)_{1 \leq i \leq d}\,.\]
This gives us

\begin{lemma}
\label{lem:valToricFlagBTwo}
Let $\TV(\fan)$ and $Y_\bullet$ be as above. For a $T$-invariant divisor $D_h$ on $\TV(\fan)$ we have that
\[\nu_{Y_\bullet,D_h}(f\chi^u) = \phi\big([\ord_{P_1}f,u]\big) + c_{Y_\bullet}(D_h)\,,\]
where the last summand vanishes if $D_h$ is normalized.
\end{lemma}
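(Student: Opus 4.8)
The plan is to reduce everything to the toric chart $\TV(\deltafix)$, exactly mirroring the proof of Lemma~\ref{lem:valToricFlagATwo}; the only new feature is that here $\deltafix$ is assembled from the two nontrivial slices $\cD^{\sigmafix}_{P_1}$ and $\cD^{\sigmafix}_{P_2}$, embedded at heights $+1$ and $-1$ respectively. First I would invoke the construction of the flag of type $\mathbf{T_2}$: the affine open $\TV(\cD^{\sigmafix}) \subset \TV(\fan)$ is, after the torus upgrade, $T$-equivariantly isomorphic to the smooth affine toric variety $\TV(\deltafix)$, and under this isomorphism the $Y_i$ correspond to the toric flag determined by the ordered primitive generators $n_1,\dots,n_d$ of the rays of $\deltafix$. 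Since $\nu_{Y_\bullet,D_h}$ is defined by a procedure local around the smooth fixed point $\xfix = Y_d$, it may be computed after restricting $f\chi^u$ to this chart, so the whole question becomes toric.

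Next I would identify the semi-invariant datum with a toric character. Because $\sigmafix$ is marked we have $C = \bP^1$, whose function field carries a coordinate $t$ with $\divisor t = [P_1] - [P_2]$; under the upgrade $t$ has weight $[1,0] \in \bZ \times M$. Hence $f\chi^u$ restricts on $\TV(\deltafix)$ to a unit times the character of weight $[\ord_{P_1}f,u] = [-\ord_{P_2}f,u]$, as recorded just before the lemma. The cone $\deltafix$ is smooth and full-dimensional, so its $d$ generators $n_1,\dots,n_d$ form a basis of the lattice $\bZ \times N$; running the inductive definition of $\nu$ on $\TV(\deltafix)$ (as in the toric setting of (\ref{subsec:toricOB})) then yields, coordinate by coordinate, $\nu_{Y_\bullet,D_h}(f\chi^u)_i = \bangle{[\ord_{P_1}f,u],n_i} + \coeff_{D_i}(D_h)$. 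Collecting the coordinates gives $\nu_{Y_\bullet,D_h}(f\chi^u) = \phi([\ord_{P_1}f,u]) + c_{Y_\bullet}(D_h)$.

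Finally, when $D_h$ is normalized, i.e.\ $h|_{\cD^{\sigmafix}} \equiv 0$, I would read off from the Weil-divisor description of $D_h$ in (\ref{subsec:CD}) that every flag coefficient $\coeff_{D_i}(D_h)$ vanishes: the vertical terms $\mu(v)\,h_{P_j}(v)$ are zero because $h$ vanishes on the vertices of $\cD^{\sigmafix}_{P_j}$, and the horizontal terms $\ul{h}(n_\rho)$ vanish on the rays of $\sigmafix$. Thus $c_{Y_\bullet}(D_h) = 0$. I expect the only genuinely delicate point to be the bookkeeping of the upgrade together with the height $\pm 1$ convention: one must verify that placing the vertices of $\cD^{\sigmafix}_{P_2}$ at height $-1$ is precisely what makes the single weight $[\ord_{P_1}f,u] = [-\ord_{P_2}f,u]$ account simultaneously for the vanishing behaviour at both $P_1$ and $P_2$, so that the one formula above is correct at every flag divisor. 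Everything else is a verbatim transcription of the $\mathbf{T_1}$ argument.
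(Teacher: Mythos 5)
Your proposal is correct and is essentially the paper's own (implicit) argument: the paper gives no separate proof of this lemma, presenting it as an immediate consequence of the identification $\TV(\cD^{\sigmafix}) \cong \TV(\deltafix)$, the weight bookkeeping $[\ord_{P_1}f,u]$ under the torus upgrade, the toric valuation computation along the rays $n_1,\dots,n_d$, and the vanishing of $c_{Y_\bullet}(D_h)$ for normalized $D_h$ via the Weil-divisor formula --- exactly the steps you spell out. The one precision worth adding (an imprecision the paper itself shares, and which you flag as the ``delicate point'' without resolving): for $f$ not a monomial in the coordinate $t$ the restriction of $f\chi^u$ to the chart is \emph{not} globally a unit times a character and $\ord_{P_1}f = -\ord_{P_2}f$ can fail; instead one should factor $f = t^{\ord_{P_1}f}g$ and note that, since $Y_1$ lies over $P_1$, the function $g$ is a unit at the generic point of $Y_1$ and restricts to the nonzero constant $g(P_1)$ on $Y_1$, so $\nu_{Y_\bullet}(g)=0$ and multiplicativity of the valuation reduces the computation to the genuine character $t^{\ord_{P_1}f}\chi^u$.
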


\begin{proposition}
\label{prop:localOBtoricBTwo}
Let $\TV(\fan)$ be a projective $T$-variety of complexity one together with a toric flag $Y_{\bullet}$ of type $\mathbf{T_2}$
and a normalized big $T$-invariant divisor $D_h$. The Okounkov body $\OB(D_h)$ then results from a translation of the rational polytope
\[W(h) \ldef \ovl{\Big \{(x,u) \in \bQ \times \Box_{\ul{h}} \;\big|\; 0 \leq x + \hstar_{P_1}(u) \leq \deg \hstar(u) \Big\}} \subset \bR \times \bR^{d-1}.\]
which is induced by the ordered set of primitive generators of the rays of $\deltafix$, i.e.\ an element $w \in W(h)$ gives us
\[\big(\bangle{w,n_1},\dots,\bangle{w,n_d}\big) \in \bR^d.\]
Hence, $\OB(D_h)$ is also a rational polytope.
\end{proposition}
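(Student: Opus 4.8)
The plan is to mirror the proof of Proposition~\ref{prop:localOBtoricATwo} (and hence of Theorem~\ref{thm:localOBgeneral}), the only structural change being that the local toric chart now carries two-point support. First I would pass to the chart $\TV(\cD^{\sigmafix}) \cong \TV(\deltafix)$: since $\sigmafix$ is marked we have $C = \bP^1$, and after subtracting a principal p-divisor the cone $\deltafix$ is the smooth cone generated by the vertices of $\cD^{\sigmafix}_{P_1}$ at height $+1$ and those of $\cD^{\sigmafix}_{P_2}$ at height $-1$, with upgraded character lattice $\bZ\times M$. Under this upgrade a semi-invariant section $f\chi^u$ reads off the weight $[\ord_{P_1}f, u]$, so that by Lemma~\ref{lem:valToricFlagBTwo} together with the normalization $h|_{\cD^{\sigmafix}}\equiv 0$ its valuation is the purely monomial vector $\nu_{Y_\bullet, D_h}(f\chi^u) = \big(\langle [\ord_{P_1}f,u], n_i\rangle\big)_{1\le i\le d}$.

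Next I would determine which weights actually occur. By (\ref{subsec:globalSections}) the character $u$ ranges over $\Box_h$ and, for fixed $u$, the homogeneous sections are $\Gamma(\bP^1, \cO(h^*(u)))$. Writing $x = \ord_{P_1}f$ and combining $\sum_{P} \ord_P f = 0$ with the pole bounds $\ord_P f \geq -h^*_P(u)$, the attainable orders are exactly the integers satisfying $0 \le x + \hstar_{P_1}(u) \le \deg\hstar(u)$; the lower bound is the pole condition at $P_1$, and the upper bound is realized, whenever $\deg\hstar(u)\ge 0$, by Riemann--Roch on $\bP^1$. This is precisely the description of $W(h)$, and applying the rational linear isomorphism $\phi_\bR\colon w\mapsto (\langle w,n_i\rangle)_i$ yields the asserted shape of $\OB(D_h)$, the summand $c_{Y_\bullet}(D_h)$ of Lemma~\ref{lem:valToricFlagBTwo} accounting for the translation in the non-normalized case.

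The Okounkov body is then the closed convex hull of $\tfrac1m\,\nu_{Y_\bullet}(mD_h)$ as $m\to\infty$, and here I would invoke the scaling identity $m\,\hstar_P(\tfrac1m u) = (mh)^*_P(u)$ exactly as in Theorem~\ref{thm:localOBgeneral}. The inclusion into $\phi_\bR(W(h))$ is formal from the bounds above. I expect the reverse inclusion to be the main obstacle: given a rational point of $W(h)$ one must realize it, in the limit, by honest valuations of sections, and this forces the choice of a large $N$ clearing all denominators of the piecewise affine data, so that the round-downs $\floor{(Nh)^*_P(Nu)}$ disappear and $\deg (Nh)^*(Nu)\ge 0$ guarantees a section of the prescribed order at $P_1$; bigness of $D_h$ makes $\deg\hstar(u)>0$ on the interior, so $W(h)$ is full-dimensional and these approximations fill it out. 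Finally, rationality of $\OB(D_h)$ follows at once, since $\hstar_{P_1}$ and $\deg\hstar$ are piecewise affine linear with rational slopes and translations, whence $W(h)$ is a rational polytope carried to a rational polytope by the rational isomorphism $\phi_\bR$.
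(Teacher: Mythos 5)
Your proposal is correct and takes essentially the same approach as the paper: the paper's own proof of Proposition \ref{prop:localOBtoricBTwo} just says ``replacing $P$ by $P_1$ the proof is identical to the proof of Proposition \ref{prop:localOBtoricATwo}'', which in turn defers to Theorem \ref{thm:localOBgeneral}, and your argument (valuation via Lemma \ref{lem:valToricFlagBTwo}, pole bounds giving $0 \leq x + \hstar_{P_1}(u) \leq \deg \hstar(u)$, then clearing denominators with a large $N$ so the round-downs $\floor{(Nh)^*_P(Nu)}$ vanish and the extremal orders are realized on $\bP^1$) is exactly that argument adapted to the two-point support of $\cD^{\sigmafix}$. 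You merely spell out details the paper leaves implicit.
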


\begin{proof}
Replacing $P$ by $P_1$ the proof is identical to the proof of Proposition \ref{prop:localOBtoricATwo}.
\end{proof}

\subsection{Examples}
\label{subsec:exLocalOB}

\subsubsection{Revisiting Toric Geometry}
\label{subsec:newOBtv}
Considering a toric variety $\TV(\Sigma)$ with big torus $T$, we may downgrade to a torus action of complexity one where we denote
the smaller torus by $T'$. It turns out that the set of $T'$-invariant admissible flags we have described in the previous section is
much bigger than the set of admissible flags which are invariant under the action of the big torus $T$. Indeed, we essentially have
a one-parameter family of choices for a general flag (depending on the choice of the point
$Q \in \bP^1 \setminus \cP$) whereas, in the toric setting, we are restricted to the flags which are associated to the finite
(and possibly empty) set of $T$-fixed points, cf.\ (\ref{subsec:toricOB}). This means, for example, that we will now be able to
compute Okounkov bodies even if there is no smooth $T$-invariant fixed point at all (see Example \ref{ex:nonSmoothTV}). Moreover,
computations of Okounkov bodies for a line bundle $\cO(D)$ with respect to some general $T'$-invariant flags can yield convex bodies
that differ considerably from $P_D$.

In the following, we will illustrate a few new features of Okounkov bodies associated to ample line bundles on some genuinely toric
$\bK^*$-surfaces.

\begin{figure}[b]
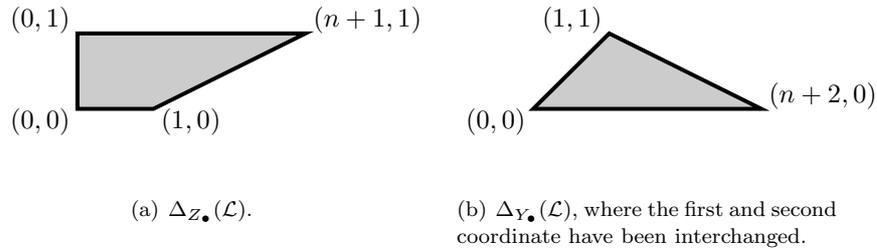

\centering
\subfigure[$\Delta_{Z_\bullet}(\cL).$]{\hirzebruchNPolytope}
\hspace*{5ex}
\subfigure[$\Delta_{Y_\bullet}(\cL)$, where the first and second coordinate have been interchanged.]{\hirzebruchNOB}
\caption{Okounkov bodies associated to different flags for an ample line bundle $\cL$ on $\bF_n$, cf.\ Examples \ref{ex:hirzebruchOB}
and \ref{ex:hirzebruchDegenerationOB}.}
\label{fig:hirzebruchNPolytopes}
\end{figure}

\begin{example}
\label{ex:hirzebruchOB}
We return to our Example \ref{ex:hirzebruch} and consider the $n$'th Hirzebruch surface $\bF_n$ as a $\bK^*$-surface together with the
ample line bundle $\cL = \cO(D_{\rho_2} + D_{\rho_3})$. Our aim is to perform computations with respect to all flags discussed so
far.\\[1ex]
\noindent
\emph{The Toric Setting}. The toric Okounkov body $\Delta_{Z_\bullet}(\cL)$ with respect to the flag
\[Z_\bullet: \quad \bF_n \supset D_{\rho_0} \supset (D_{\rho_0} \cap D_{\rho_1})\]
then is given by the polytope which is pictured in Figure \ref{fig:hirzebruchNPolytopes}(a).\\[2ex]
\noindent
\emph{A General Flag of Type $\mathbf{G_1}$.} To give such a flag, we choose a parabolic fixed point $\xfix$ represented by the
interval $[0 \; \infty)$ in the slice $\fan_Q$ for $Q \in \bP^1 \setminus\{0,\infty\}$. Then we define
\[Z^1_\bullet: \quad \bF_n \supset r(\pi^{-1}(Q)) \supset (Q,[0 \; \infty))\,.\]
Note that $D_h = D_{\rho_2} + D_{\rho_3}$ already is normalized with respect to $Z_{\bullet}^1$. Using Theorem \ref{thm:localOBgeneral},
an easy calculation then shows that
\[\Delta_{Z^1_{\bullet}}(\cL) = \conv\{(0,0),(1,0),(1,1),(0,n+1)\}.\]
\noindent
\emph{A General Flag of Type $\mathbf{G_2}$.} We consider a general point $Q \in \bP^1 \setminus \{0,\infty\}$ together with the flag
\[Z^2_\bullet: \quad \bF_n \supset r(\pi^{-1}(Q)) \supset (Q,(-\infty \; 0])\,.\]
Note that $(Q,(-\infty\; 0])$ corresponds to the elliptic fixed point. We take $D_h = (n+1)D_{\rho_0} + D_{\rho_1}$ which is linear
equivalent to $D_{\rho_2} + D_{\rho_3}$ and normalized with respect to $Z^2_{\bullet}$. An easy computation then shows that
\[\Delta_{Z^2_{\bullet}}(D_h) = \conv\{(0,0),(0,n+1),(1,n),(1,n+1)\}.\]

\begin{figure}[t]
\centering
\subfigure[$\hstar_0$]{\hirzHstarZero}\hspace*{10ex}
\subfigure[$\hstar_\infty$]{\hirzHstarInfty}
\caption{Graphs of $\hstar_0$ and $\hstar_\infty$, cf.\ Example \ref{ex:hirzebruchOB}.}
\label{fig:hirzHstar}
\end{figure}

\begin{figure}[b]
\centering
\subfigure[$\hstar_0$]{\hirzebruchHstarZero}\hspace*{10ex}
\subfigure[$\hstar_\infty$]{\hirzebruchHstarInfty}
\caption{Graphs of $\hstar_0$ and $\hstar_\infty$, cf.\ Example \ref{ex:hirzebruchDegenerationOB}.}
\label{fig:hirzebruchHstar}
\end{figure}

\noindent
\emph{A Toric Flag of Type $\mathbf{T_1}$.} Let us consider an admissible flag of type $\mathbf{T_1}$ which is associated to the
hyperbolic fixed point $\xfix$ represented by the interval $[-1/n \quad 0]$ in the slice $\fan_0$. Numbering the rays of $\deltafix$
by $(r_1,r_2):= \big(\bQ_{\geq 0}(1,0),\bQ_{\geq 0}(n,-1)\big)$, we set
\[Z^3_\bullet: \quad  \bF_n \supset D_{r_1} \supset (D_{r_1} \cap D_{r_2})\,.\]
Using linear equivalence, we pass from $D_{\rho_2} + D_{\rho_3}$ to $D_h := D_{\rho_0} + D_{\rho_3}$ to obtain a divisor which is
normalized with respect to $Z^3_\bullet$. The graphs of the functions $\hstar_0$ and $\hstar_\infty$ are given in Figure
\ref{fig:hirzHstar}. Furthermore, we compute that 
\[W(h) = \conv \{(0,-1),(0,0),(1,-1),(1,0),(1,n)\} \subset \bR^2\,.\] 
Translating $W(h)$ via $(n_{r_1},n_{r_2})$ gives us 
\[\Delta_{Z^3_{\bullet}}(D_h) = \conv \{(0,1),(0,0),(1,n+1),(1,0)\}.\]

\noindent
Moreover, it is not hard to check that flags of type $\mathbf{T_1}$ with respect to the parabolic fixed points over $0$ or $\infty$
yield identical Okounkov bodies.\\[1ex]
\noindent
\emph{A Toric Flag of Type $\mathbf{T_2}$.} Finally, we compute the Okounkov body for a flag of type $\mathbf{T_2}$ with respect to the
elliptic fixed point $\xfix$. First, we order the rays of the induced cone $\deltafix$ by $(r_1,r_2) := \big(\bQ_{\geq 0}(1,-1),
\bQ_{\geq 0}(-1,0)\big)$ and set
\[Z^4_{\bullet}: \quad \bF_n \supset D_{r_1} \supset (D_{r_1} \cap D_{r_2})\,.\]
Moreover, we take $D_h = (n+1)D_{\rho_0} + D_{\rho_1}$ which is linear equivalent to $D_{\rho_2} + D_{\rho_3}$ and normalized with
respect to $Z^4_{\bullet}$. With these data we compute 
\[W(h) = \conv\{(0,-n-1),(0,0),(-1,-n-1),(-1,-n)\}\]
and obtain
\[\Delta_{Z^4_{\bullet}}(D_h) = \conv\{(n+1,0),(0,0),(n,1),(n-1,1)\}.\]
\end{example}

\noindent
\emph{Concluding Remark}. This particular downgrade did not give us any ``new'' polytopes. Indeed, one easily checks that all of them
can be transformed into $\Delta_{Z_\bullet}(\cL)$ by an affine lattice isomorphism.

\begin{example}
\label{ex:hirzebruchDegenerationOB}
In contrast to Example \ref{ex:hirzebruch}, we now choose the subtorus action which arises from the following data:
\[F = \left(\begin{array}{c} 1 \\ 1 \end{array} \right)\,, \qquad  P = \left(\begin{array}{cc} 1 & -1 \end{array}\right)\,,
\qquad  s = \left(\begin{array}{cc} 1 & 0 \end{array}\right)\,.\]
The associated divisorial fan is given in Figure \ref{fig:hirzebruchVar}. Furthermore, let $Q \in \bP^1 \setminus \{0,\infty\}$ be a
general point and fix the following general flag of type $\mathbf{G_1}$:
\[Y_\bullet: \quad \bF_n \supset r(\pi^{-1}(Q)) \supset (Q,[0 \;\; \infty))\,.\]
See Figure \ref{fig:hirzebruchHstar} for the graphs of $\hstar_P$ and the right hand polytope in Figure \ref{fig:hirzebruchNPolytopes}
for a picture of the resulting Okounkov body $\OB(\cL)$. Observe that this polytope corresponds to the toric variety $\bP(1,1,n+2)$. 
\end{example}

\begin{figure}[t]
\centering \hspace*{-8ex}
\subfigure[Relevant slices of $\fan$.]{\hirzebruchVarFan}
\subfigure[Tailfan and degree of $\fan$.]{\hirzebruchVarPlus}
\caption{Divisorial fan associated to $\bF_n$, cf.\ Example \ref{ex:hirzebruchDegenerationOB}.}
\label{fig:hirzebruchVar}
\end{figure}

\begin{figure}[b]
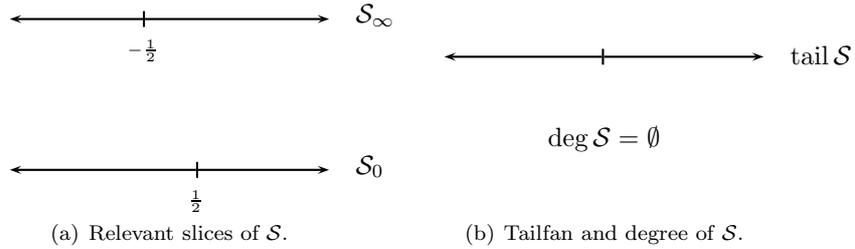

\centering \hspace*{-8ex}
\subfigure[Relevant slices of $\fan$.]{\toricSurfaceSimpleFan}
\subfigure[Tailfan and degree of $\fan$.]{\toricSurfaceSimplePlus}
\caption{Divisorial fan associated to $\TV(\Sigma)$, cf.\ Example \ref{ex:nonSmoothTV}.}
\label{fig:toricSurfaceSimple}
\end{figure}

\begin{example}
\label{ex:nonSmoothTV}
We consider the projective toric surface $\TV(\Sigma)$ whose primitive generators of the rays are given in the following list:
\[\begin{array}{cccc} \nu_1 = (1,1)\,, & \nu_2 = (-1,1)\,, & \nu_3 = (-1,-1)\,, & \nu_4 = (1,-1)\,.\end{array}\]
Since all of its fixed points are singular we cannot find an admissible flag as chosen in the toric setting of (\ref{subsec:toricOB}).
Nevertheless, we may perform a downgrade and choose the subtorus action that comes from 
\[F = \left(\begin{array}{c} 1 \\ -1 \end{array} \right)\,, \qquad  P = \left(\begin{array}{cc} 1 & 1 \end{array}\right)\,, \qquad
 s = \left(\begin{array}{cc} 1 & 0 \end{array}\right)\,.\]
The associated divisorial fan is given in Figure \ref{fig:toricSurfaceSimple}. Next, we consider the ample line bundle
$\cL = \cO(2D_{\rho_2} + 2D_{\rho_3})$ which is given by the following generators of the global sections over the affine charts
$U_{\sigma_i}$:
\[u_{\sigma_1} = [0 \;\; 0]\,,\quad u_{\sigma_2} = [1 \; -1]\,,\quad u_{\sigma_3} = [2 \;\; 0]\,,\quad u_{\sigma_4} = [1 \;\; 1]\,.\] 
Let $Q \in \bP^1 \setminus \{0,\infty\}$ and fix the following general flag of type $\mathbf{G_1}$:
\[Y_\bullet: \quad  \TV(\Sigma) \supset r(\pi^{-1}(Q)) \supset (Q,[0 \;\; \infty))\,.\]
Invoking Theorem \ref{thm:localOBgeneral}, we see that $\Delta_{Y_\bullet}(\cL)$ is equal to
\[\conv\{(0,0),(0,2),(1,0),(1,2)\}.\]
\end{example}

\subsubsection{The Anti-Canonical Bundle on $\bP(\Omega_{\bP^2})$}
\label{ex:cotangentOB}
\ \\
\emph{A Toric Flag.} For the construction of a toric flag of type $\mathbf{T_2}$, we consider the p-divisor $\cD_{\sigmafix}$
whose tailcone $\sigmafix$ is generated by the rays $(1,0)$ and $(1,1)$. Note that the polyhedral coefficient $\cD_{\infty}$ is
trivial. Hence, we have that $\TV(\cD_{\sigmafix}) = \TV(\deltafix) = \bA^3$ with $\deltafix$ being spanned by the rays
$\rho_1,\rho_2$ and $\rho_3$ whose primitive generators are $(1,0,0)$, $(1,0,1)$ and $(-1,1,0)$, respectively. We also take this
enumeration for the definition of our flag, i.e.\
\[Y_\bullet: \quad \TV(\deltafix) \; \supset \; D_{\rho_1} \supset \; (D_{\rho_1} \cap D_{\rho_2}) \; \supset \; (D_{\rho_1}
\cap D_{\rho_2} \cap D_{\rho_3}) = \xfix \,.\]
An easy calculation according to Proposition \ref{prop:localOBtoricBTwo} shows that $\OB(-K_X) \subset \bR^3$ is the convex
polytope whose vertices are represented as the columns of the following matrix
\[\left(\begin{array}{ccccccc} 0 & 2 & 0 & 2 & 2 & 0 & 0 \\ 0 & 0 & 2 & 2 & 0 & 2 & 0 \\ 0 & 0 & 0 & 0 & 2 & 2 & 4 \end{array}\right).\]
Interchanging the $y$ and $z$-axes and scaling with the factor $1/2$ gives us the same polytope which was computed in
\cite[Example 6.1]{obToricVB}.

\noindent
\emph{A General Flag.} Let $Q \in \bP^1 \setminus \{0,1,\infty\}$. We consider the maximal cone spanned by the rays
$\delta_1 = \bQ_{\geq 0 }\cdot (1,0)$ and $\delta_2 = \bQ_{\geq 0 } \cdot(1,1)$ together with the induced general flag of type
$\mathbf{G_2}$:
\[Y_\bullet: \quad  \bP(\Omega_{\bP^2}) \supset  r\big(\pi^{-1}(Q) = \TV(\Sigma)\big) \supset r(D_{\delta_1}) \supset r(D_{\delta_1}
\cap D_{\delta_2}) = \xfix \,.\]
Applying Theorem \ref{thm:localOBgeneral}, we obtain that $\OB(-K_X) \subset \bR^3$ is the convex polytope with vertices represented
as the columns of the following matrix
\[\left(\begin{array}{ccccccc} 0 & 0 & 0 & 2 & 0 & 0 & 0 \\ 0 & 2 & 0 & 2 & 4 & 2 & 4 \\ 0 & 0 & 2 & 2 & 2 & 4 & 4 \end{array}\right).\]

\subsubsection{The Anti-Canonical Bundle on the Smooth Quadric}
\label{ex:quadricOB}
Recall from Example \ref{ex:quadric} that every maximal cone in the tailfan $\fan(Q)$ is marked and singular. Hence, we may only
construct a toric flag of type $\mathbf{T_2}$. 

So let us consider the p-divisor $\cD_{\sigmafix}$ whose tailcone $\sigmafix$ is generated by the rays $(1,1)$ and $(1,-1)$. Note
that the polyhedral coefficient $\cD_1$ is trivial. Hence, we have that $X(\cD_{\sigmafix}) = \TV(\deltafix) = \bA^3$ with
$\deltafix$ being spanned by the rays $\rho_1,\rho_2$ and $\rho_3$ whose primitive generators are $(1,0,0)$, $(1,0,-1)$ and $(-2,1,1)$,
respectively. We also take this enumeration for the definition of our flag, i.e.\
\[Y_\bullet: \quad \TV(\deltafix) \; \supset \; D_{\rho_1} \supset \; (D_{\rho_1} \cap D_{\rho_2}) \; \supset \; (D_{\rho_1}
\cap D_{\rho_2} \cap D_{\rho_3}) = \xfix \,.\]
An easy calculation now shows that
\[\OB(-K_X) = \conv\{(0,0,0),(3,0,0),(0,3,0),(0,0,6)\} \subset \bR^3\,,\]
whose normal fan corresponds to $\bP(1,1,2,2)$.

\section{Degenerations and Deformations}
\label{sec:ToricDeg}

\subsection{Anderson's Approach}
\label{subsec:anderson}
We will use our results from the previous section to investigate toric degenerations with a focus upon Dave Anderson's article
\cite{obToricDegenerations}. Let us first recall some notions and notation from loc.\ cit.

Let $K$ denote a field (which we always think of as a function field $\bK(X)$ over $\bK$) and equip $\bZ^d$ with the lexicographic
order. We fix a $\bZ^d$-valuation $\nu$ on $K$. For a finite-dimensional $\bK$-subspace $V \subset K$ we denote by $V^m \subset K$
the subspace which is spanned by elements of the form $f_1 \cdots f_m$ with $f_i \in V$. Furthermore, we set
\[\Gamma(V) \ldef \Gamma_\nu(V) \ldef \big \{(m,\nu(f)) \in \bN \times \bZ^d \lst f \in V^m \setminus 0 \big\} \subset \bN \times
\bZ^d\,,\]
which is a graded semigroup. By $\cone \Gamma(V) \subset \bR \times \bR^d$ we mean the closure of the convex hull of $\Gamma(V)$.

Following \cite{obKK}, one may define the \emph{Newton-Okounkov body} of $V$ as
\[\Delta(V) \ldef \Delta_\nu(V) \ldef \cone \Gamma(V) \cap \big(\{1\} \times \bR^d\big).\]

\begin{lemma}
\label{lem:finGenSG}
Let $(X,\cL) := (\TV(\Xi(\Psi)),\cO(D_{\Psi^*}))$ be a polarized rational $\bQ$-factorial projective $T$-variety of complexity one which
is associated to the divisorial polytope $(\Psi,\Box,\bP^1)$. Furthermore, fix a general or toric flag $Y_\bullet$ in $X$. As above,
we set $V := \Gamma(X,\cL)$ and $\nu := \nu_{Y_\bullet}$. Then the associated semigroup $\Gamma := \Gamma_{\nu}(V)$ is finitely
generated.
\end{lemma}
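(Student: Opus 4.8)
The plan is to reduce the statement to a purely combinatorial one about a semigroup of lattice points and then to combine Gordan's lemma with a finiteness argument that controls the round-down defect. Throughout I use that $C=\bP^1$ and that, by \cite[Proposition 3.23]{tidiv} (see (\ref{subsec:globalSections})), the weight-$u$ part of $\Gamma(\TV(\fan),\cO(D_{mh}))$ is $\Gamma(\bP^1,\cO(\floor{(mh)^*(u)}))$, a space of sections of a line bundle on $\bP^1$. Since $\cL$ is ample the section ring is finitely generated, and I will compute with the full weight-graded spaces $\bigoplus_u\Gamma(\bP^1,\cO(\floor{(mh)^*(u)}))$, the passage from the products $V^m$ being harmless for the combinatorial conclusion.

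First I would reduce to semi-invariant sections. By the lemma of Gonz\'alez \cite[Lemma 4.5]{obToricVB}, the value $\nu_{Y_\bullet}(g)$ of any section equals $\nu_{Y_\bullet}(g_u)$ of one of its $M$-homogeneous components, so $\Gamma$ is already the set of values of semi-invariant elements $f\chi^u$. For these the valuation is computed explicitly in Theorem \ref{thm:localOBgeneral} (general flag) and in Lemma \ref{lem:valToricFlagATwo}, Lemma \ref{lem:valToricFlagBTwo} (toric flags): because the relevant cone $\sigmafix$ (resp. $\deltafix$) is smooth, the map $u\mapsto(\langle u,n_i\rangle)_i$ is a lattice isomorphism onto the ``toric'' coordinates and the one remaining coordinate is $\ord_Q f$ (resp. $\ord_P f$). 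Reading off these formulas identifies $\Gamma$, up to an affine lattice isomorphism, with the graded semigroup
\[
S=\Big\{(m,u,k)\in\bN\times(M\cap m\Box_h)\times\bN \;\Big|\; 0\le k\le \sum_{P}\floor{(mh)^*_P(u)}\Big\},
\]
the upper bound being exactly the maximal order of vanishing at the flag point available by Riemann--Roch on $\bP^1$, as in the proof of Theorem \ref{thm:localOBgeneral}. That $S$ is closed under addition is the superadditivity estimate already used there: one has $\Box_{(m_1+m_2)h}=\Box_{m_1h}+\Box_{m_2h}$, and concavity of each $\hstar_P=\minvert(\,\cdot\,-h_P)$ together with $\floor{a}+\floor{b}\le\floor{a+b}$ gives $\sum_P\floor{(m_1h)^*_P(u_1)}+\sum_P\floor{(m_2h)^*_P(u_2)}\le\sum_P\floor{((m_1+m_2)h)^*_P(u_1+u_2)}$.

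It then remains to prove that $S$ is finitely generated. Via the lattice isomorphism above, $\cone S$ is the cone over $\{1\}\times\OB(D_h)$, hence rational polyhedral, because $\OB(D_h)$ is a rational polytope by Theorem \ref{thm:localOBgeneral} (resp.\ Propositions \ref{prop:localOBtoricATwo}, \ref{prop:localOBtoricBTwo}); by Gordan's lemma its lattice points form a finitely generated semigroup $\bar S$, the saturation of $S$. The only discrepancy between $S$ and $\bar S$ is the round-down defect $\floor{\sum_P(mh)^*_P(u)}-\sum_P\floor{(mh)^*_P(u)}\in\{0,\dots,|\cP|-1\}$, which depends only on the fractional parts of the $(mh)^*_P(u)$, hence only on the class of $(m,u)$ in a fixed finite quotient of the lattice. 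The hard part will be turning this boundedness into finite generation: I would refine $\cone S$ into finitely many subcones on which every $\hstar_P$ is affine, so that on each the defect is periodic, and then show that $\bar S$ is a finitely generated module over $S$, i.e.\ $\bar S=F+S$ for a finite set $F\subset\bar S$ — the superadditivity of $g(m,u)=\sum_P\floor{(mh)^*_P(u)}$ lets one absorb the bounded defect by subtracting a fixed interior element of $S$. Granting this, the Artin--Tate lemma applied to $\bK[S]\subseteq\bK[\bar S]$ shows that $\bK[S]$ is a finitely generated $\bK$-algebra, whence $S$, and with it $\Gamma$, is a finitely generated semigroup. Constructing the finite module generators $F$ is the one genuinely non-formal point of the argument.
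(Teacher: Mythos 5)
Your proposal contains two genuine gaps, and they sit exactly where the actual content of the lemma lies.

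First, the semigroup $\Gamma$ of the lemma is built from the spaces $V^m$ spanned by $m$-fold \emph{products} of sections of $\cL$, not from the full spaces $\Gamma(X,\cL^{\otimes m})$, and your declaration that this passage is ``harmless'' is unjustified: a subsemigroup of a finitely generated semigroup need not be finitely generated, so proving finite generation for the value semigroup of the full section ring says nothing about $\Gamma$ unless you also show that $\nu_{Y_\bullet}(V^m\setminus 0)$ realizes the same values; and $V^m=\Gamma(X,\cL^{\otimes m})$ would be a projective-normality type statement that you neither have nor prove. This is precisely the point where the paper's proof uses rationality: by \cite[Lemma 2.2]{obToricDegenerations} it suffices to show that $\cone(\Gamma)$ is generated by $\Gamma\cap(\{1\}\times\bZ^d)$, and the only geometric input is that, since $C=\bP^1$, the set $\nu_{Y_\bullet}(V^m\setminus 0)$ consists \emph{exactly} of the $m$-fold sums of elements of $\nu_{Y_\bullet}(V\setminus 0)$, so that $\Delta(V)=\conv\big(\{1\}\times\nu_{Y_\bullet}(V\setminus 0)\big)$ is a lattice polytope and the cone is generated in level one. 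Anderson's lemma then performs all the combinatorial work (essentially your Gordan/module/Artin--Tate chain) in one citation; your route re-derives it by hand while skipping the one geometric step that actually needs $C=\bP^1$.

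Second, the step you yourself flag as unproven --- that the saturation $\ovl{S}$ is a finitely generated $S$-module --- is not a formality, and the mechanism you propose (``absorb the bounded round-down defect by subtracting a fixed interior element of $S$'') fails as stated: bounded defect from the saturation does \emph{not} imply module finiteness. Take $S=\{(m,k)\in\bN^2 : 0\le k\le m-1\}\cup\{(0,0)\}$. Here the bounding function $g(m)=m-1$ is superadditive, $\ovl{S}=\{(m,k):0\le k\le m\}$, the defect is $1$, and the interior element $z=(3,1)$ does satisfy $z+\ovl{S}\subseteq S$; nevertheless, every diagonal point $(m,m)$ must be its own module generator (if $(m,m)=f+s$ with $s=(a,b)\in S\setminus\{0\}$, then $b\le a-1$ forces $f=(m-a,m-b)\notin\ovl{S}$), so $\ovl{S}$ is not a finitely generated $S$-module, and indeed $S$ is not a finitely generated semigroup. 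Note that the inclusion your absorption argument produces, $z+\ovl{S}\subseteq S$, points the wrong way; module generation needs $\ovl{S}\subseteq F+S$ for a finite $F$, which bounded defect cannot deliver. Any correct completion must therefore use, in an essential way, that each $(mh)^*_P$ is the floor-free restriction of a rational piecewise linear homogeneous function (so that the defect vanishes on a finite-index sublattice of each subcone of linearity) --- the periodicity you allude to but do not carry out. That argument is the missing content of your proof, not a routine verification.
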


\begin{proof}
According to \cite[Lemma 2.2]{obToricDegenerations} it is enough to show that $\cone(\Gamma)$ is generated by $\Gamma \cap (\{1\} \times
\bZ^n)$. But since we are in the rational case $(C = \bP^1)$ it is not hard to see that $\nu_{Y_\bullet}(V^m \setminus 0) \subset
\bZ^d_{\geq0}$ consists exactly of the $m$-fold sums of elements in $\nu_{Y_\bullet}(V \setminus 0)$. Hence,
$\Delta(V) = \conv \big(\{1\} \times \nu_{Y_\bullet}(V \setminus 0)\big)$ which is a lattice polytope.
\end{proof}

\begin{proposition}
\label{prop:Tdegeneration}
Let $(X,\cL)$ be as in Lemma \ref{lem:finGenSG} and fix an integer $m>0$ such that $mL$ becomes very ample. The image of $X$ under the
linear system $|m\cL|$ then admits a flat degeneration to the not necessarily normal toric variety 
\[X(\Gamma) := \proj k[\Gamma]\] 
whose normalization is the toric variety associated to $\Delta(V)$.
\end{proposition}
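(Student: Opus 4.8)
The plan is to deduce the statement directly from the main degeneration result of Anderson \cite{obToricDegenerations}, whose two hypotheses we verify using the preceding material. The first hypothesis is that the valuation $\nu = \nu_{Y_\bullet}$ has one-dimensional leaves. I would recall that this is an intrinsic feature of the Okounkov valuation attached to \emph{any} admissible flag: by the inductive construction of the valuation vector in (\ref{subsec:OC}), each successive restriction strips off one coordinate, so the graded pieces $\{f \lst \nu(f) \geq a\}/\{f \lst \nu(f) > a\}$ of the induced filtration on the section ring $\bigoplus_m \Gamma(X,\cO_X(m\cL))$ are at most one-dimensional. This is precisely the one-dimensional-leaves condition, and it holds verbatim for both the general and the toric flags constructed in (\ref{subsec:flags}).

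The second hypothesis is finite generation of the value semigroup, and this is supplied verbatim by Lemma \ref{lem:finGenSG}: since $C = \bP^1$, the semigroup $\Gamma = \Gamma_\nu(V)$ is finitely generated, with $\cone(\Gamma)$ generated by its level-one slice $\Gamma \cap (\{1\} \times \bZ^d)$. With one-dimensional leaves and finite generation in hand, the main theorem of \cite{obToricDegenerations} applies and yields a flat family over $\bA^1$ (a Rees-type degeneration built from the $\nu$-filtration) whose generic fiber is the image of $X$ under $|m\cL|$ for $m$ large enough that $m\cL$ is very ample, and whose special fiber is $X(\Gamma) = \proj k[\Gamma]$. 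This establishes the asserted flat degeneration.

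It remains to identify the normalization of $X(\Gamma)$. Since $k[\Gamma]$ is the semigroup algebra of the graded semigroup $\Gamma \subset \bN \times \bZ^d$, its integral closure is the semigroup algebra $k[\wt{\Gamma}]$ of the saturation $\wt{\Gamma}$ of $\Gamma$ inside its ambient group, so the normalization of $X(\Gamma)$ is $\proj k[\wt{\Gamma}]$. Because $\cone(\Gamma)$ is generated by its level-one slice (as recorded above), the saturation $\wt{\Gamma}$ consists exactly of the lattice points of $\cone(\Gamma)$, and its level-one slice is the set of lattice points of $\Delta(V) = \cone(\Gamma) \cap (\{1\} \times \bR^d)$. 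Hence $\proj k[\wt{\Gamma}]$ is the projective toric variety associated to the lattice polytope $\Delta(V)$, which is what we wanted.

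The main obstacle is conceptual rather than computational: it lies in checking carefully that our flags fit Anderson's framework, i.e.\ that $\nu_{Y_\bullet}$ genuinely has one-dimensional leaves and that the very-ampleness of $m\cL$ makes the embedded coordinate ring match the semigroup $\Gamma$ built from $V = \Gamma(X,\cL)$. Once these identifications are in place, the degeneration is a black-box application of \cite{obToricDegenerations} and the normalization statement reduces to the routine fact that normalization of a semigroup algebra corresponds to saturation of the semigroup.
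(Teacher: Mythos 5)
Your proposal is correct and takes essentially the same route as the paper, whose entire proof is the citation ``See \cite[Theorem 5.4]{obToricDegenerations}'': a black-box application of Anderson's degeneration theorem, with finite generation supplied by Lemma \ref{lem:finGenSG}. You simply make explicit what the paper leaves implicit in that citation, namely the one-dimensional-leaves property of flag valuations and the standard fact that normalizing the semigroup algebra corresponds to saturating $\Gamma$, whose level-one slice then gives the lattice points of $\Delta(V)$.
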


\begin{proof}
See \cite[Theorem 5.4]{obToricDegenerations}. 
\end{proof}

We conclude this section with a result on degenerations of rational $\bQ$-factorial projective $\bK^*$-surfaces. Recall that a
point $v$ in a convex set $K$ in a real vector space $V$ is called \emph{extremal} if it does not lie on a compact line
segment contained in $K$.

\begin{proposition}
\label{prop:degenerationWPS}
Allowing for normalization after each degeneration step, every rational $\bQ$-factorial projective $\bK^*$-surface $X$ degenerates to
a weighted projective space.
\end{proposition}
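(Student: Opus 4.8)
The plan is to reduce the statement to a purely combinatorial fact about lattice polygons and then run an induction that removes one edge per degeneration step until a triangle remains. Since $X$ is rational, projective and $\bQ$-factorial we have $C=\bP^1$ and $X$ carries an ample class; fixing an ample $\cL$ it corresponds, via (\ref{subsec:divisorialPolytopes}), to a divisorial polytope on $\bP^1$, so that once an admissible flag is chosen the hypotheses of Lemma \ref{lem:finGenSG} and Proposition \ref{prop:Tdegeneration} are met. For a $\bK^*$-surface ($d=2$) a general flag is always available: over a general point $Q\in\bP^1$ the fibre $r(\pi^{-1}(Q))\cong\TV(\tail\fan)$ is a smooth rational curve, so its two torus fixed points are smooth points of $X$ for general $Q$, and choosing $\xfix$ to be one of them (using $\mathbf{G_1}$ or $\mathbf{G_2}$ according as the corresponding ray $\sigmafix$ is unmarked or marked) yields an admissible flag as in (\ref{subsec:flags}). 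Applying Theorem \ref{thm:localOBgeneral} to a normalized representative of $\cL$ exhibits $\OB(\cL)$ as the rational polygon $\Delta=\{(x,w)\mid w\in[a,b],\ 0\le x\le\deg\hstar(\phi_\bR^{-1}(w))\}$, and Proposition \ref{prop:Tdegeneration} then degenerates $X$, allowing normalization, to the normal projective toric surface $X_\Delta$ attached to $\Delta$. It therefore suffices to prove the statement for toric surfaces.

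Every toric surface is $\bQ$-factorial (a fan in $N_\bQ\cong\bQ^2$ is automatically simplicial), rational and projective, so the construction above applies again after viewing it as a $\bK^*$-surface by a downgrade as in (\ref{subsec:toricDown}). I would induct on the number $k$ of edges of the defining lattice polygon $\Delta$, equivalently on the Picard number $k-2$. If $k=3$ then $X_\Delta$ has a fan with three rays, i.e.\ it is a toric surface of Picard number one; up to primitivity of its ray generators this is a weighted projective space, which is the base case. For $k\ge4$ the inductive step will produce, through one further Okounkov degeneration, a toric surface whose polygon has at most $k-1$ edges.

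The heart is this reduction step. Given $\Delta$ with $k\ge4$ vertices, pick a primitive $F\in N$ and downgrade $X_\Delta$ to a $\bK^*$-surface with $\bK^*$-direction $F$. By Theorem \ref{thm:localOBgeneral} the general-flag Okounkov body $\Delta'$ of $\cL$ is again the region under the concave, piecewise affine function $w\mapsto\deg\hstar(\phi_\bR^{-1}(w))$, which in the toric situation is the length of the slice $\Delta\cap\{\langle F,\cdot\rangle=w\}$; thus $\Delta'$ is the polygon under the graph of the width of $\Delta$ in the direction $F$, exactly the mechanism already visible in Example \ref{ex:hirzebruchDegenerationOB}. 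The breakpoints of this width function lie over the $F$-levels of the vertices of $\Delta$. Choosing $F$ perpendicular to a diagonal $v_iv_j$ with $2\le j-i\le k-2$ forces $v_i$ and $v_j$ onto one common slice, so that these two vertices contribute a single breakpoint instead of two; hence $\Delta'$ has at most $k-1$ edges. Proposition \ref{prop:Tdegeneration} degenerates $X_\Delta$ to $X_{\Delta'}$, and the induction hypothesis closes the step.

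The main obstacle is the combinatorial claim in the reduction: one must choose the diagonal (equivalently the direction $F$) so that the two aligned vertices are genuinely interior for $F$, so that $F$ is not perpendicular to either extreme edge (which would introduce an unwanted vertical edge of $\Delta'$), and so that no new vertices appear; for $k\ge4$ such a diagonal exists, but making this precise, together with the identification of $\deg\hstar$ with the slice–width of $\Delta$, is where the real argument lies. A secondary point is the base case, where one checks that the terminal fan of Picard number one has primitive ray generators spanning $N$, so that it is an honest weighted projective space rather than a finite quotient. Everything else — existence of flags, preservation of $\bQ$-factoriality, and finite generation of the relevant semigroups — is supplied by the results already established for $C=\bP^1$.
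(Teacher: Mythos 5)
Your proposal follows essentially the same strategy as the paper's own proof: first degenerate to a toric surface via one Anderson-type step, then induct on the Picard rank (equivalently the vertex count of an ample polytope) by downgrading along a direction perpendicular to a diagonal of $P_D$, so that the two endpoints of the diagonal land on a common slice and the general-flag Okounkov body --- the region under the slice-width function --- has one extremal point fewer, whence Proposition \ref{prop:Tdegeneration} applies. The only differences are cosmetic: the paper insists on a diagonal joining vertices at distance two in the boundary graph of $P_D$, which makes the unique vertex in between an elliptic fixed point through which the general flag is taken, whereas you allow any non-adjacent diagonal; and the two caveats you flag (admissibility of the chosen invariant flag through a possibly singular fixed point, and the honest-versus-fake weighted projective space issue at Picard rank one) are genuine, but they are left equally implicit in the paper's own argument.
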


\begin{proof}
We may assume that $X$ is toric with Picard rank $\geq 2$. Choosing an invariant ample divisor $D$ and considering $\partial P_D$ as
a circular graph whose vertices are the extremal points of $P_D$, we can find two extremal points $u_1,u_2 \in P_D$ of distance $2$,
i.e.\ there is exactly one extremal point in between. In the next step, we choose a primitive generator $v$ of $(u_1 - u_2)^\perp
\subset N$ to define a downgrade via the exact sequence 
\[\xymatrix{0 \ar[r] & (u_1 - u_2)^\perp \ar[r]^{\quad v} & N \ar[r] & N/(u_1 - u_2)^\perp \ar[r] & 0\,.}\]
It follows that there is at least one elliptic fixed point $\xfix$ (corresponding to the unique extremal point in between $u_1$ and
$u_2$). We continue by constructing a general flag $Y_\bullet$ of type $\mathbf{B_1}$ with respect to $\xfix$. The resulting Okounkov
body $\OB(D)$ then has one extremal point less than $P_D$ since $u_1$ and $u_2$ lie in the same fiber with respect to $v^\vee: M
\lra \Hom_\bZ \big( (u_1 - u_2)^\perp,\bZ\big)$. Hence, the Picard rank drops and we may proceed by induction.
\end{proof}

\begin{remark}
Proposition \ref{prop:degenerationWPS} can also be shown by using so-called degeneration diagrams, as presented in
\cite[Section 6.2]{phdNathan}. 
\end{remark}

\subsection{Ilten's Approach}
\label{subsec:ilten}

The previous section focused on degenerations. Now, we reverse our point of view and investigate the link between ``decompositions''
of Okounkov bodies and $T$-deformations. For the development and detailed treatment of the latter we refer the reader to
\cite{phdNathan}. In the following, we briefly recall the fundamental notion of Section 7.3 in loc.\ cit. 

We begin by recalling the notion of a decomposition of a divisorial polytope, cf. \cite[Definition 7.3.1]{phdNathan}. Let
$\Psi: \Box \to \wdiv_\bQ \bP^1$ be a divisorial polytope. An $\alpha$-\emph{admissible one-parameter decomposition} of $\Psi$
consists of two piecewise affine functions $\Psi_0^0, \Psi_0^1: \Box \to \bQ$ such that:
\begin{enumerate}
\item The graph of the map $\Psi^i_0$ has lattice vertices for $i=0,1$.
\item $\Psi_0(u) = \Psi_0^0(u) + \alpha\Psi^1_0(u)$ for all $u \in \Box$.
\item For any full-dimensional polyhedron in $\Box$ on which $\Psi_0$ is affine, $\Psi_0^i$ has non-integral slope on this polyhedron
for at most one $i \in \{0,1\}$.
\item If $\alpha \neq 1$, then $\Psi_0^1$ always has integral slope.
\end{enumerate}

\noindent
It is explained in loc.\ cit.\ how to construct a one-parameter $T$-deformation
\[\pi: \cX \to B\]
of $\TV(\Xi(\Psi))$ over the affine base $0 \in B \subset \bA^1 = \spec \bK[t]$ from such a decomposition of $\Psi$.

\begin{example}
We return to Example \ref{ex:hirzebruchDegenerationOB} to construct a $T$-deformation with special fiber $\bP(1,1,n+2)$ and general
fiber $\bF_n$ in terms of an $\alpha$-admissible one-parameter decomposition of $\OB(D_h)$, cf.\ \cite[Section 7.3]{phdNathan}. To do
so, we identify the Okounkov body with the divisorial polytope 
\[\Psi: [0,n] \to \wdiv_\bQ \bP^1\,, \textnormal{ with}\; \left\{\begin{array}{rcll} \Psi_P(u) & = & \hstar_0(u) + \hstar_\infty(u),
& P \textnormal{ a fixed point},\\[1mm] \Psi_Q & \equiv & 0\,, & Q \in \bP^1 \setminus P\,. \end{array}\right. \] 
It is not hard to see that $\Psi$ yields $\bP(1,1,n+2)$ together with the ample line bundle $\cO(D_{\Psi^\ast})$.
\end{example}

\begin{example}
Let us consider the Okounkov body $\OB(-K_Q)$ of the (ample) anti-canonical bundle on the smooth quadric in $\bP^4$ with respect to a
toric flag of type $\mathbf{T_2}$. It is now not hard to construct two decompositions of the latter such that there is a $T$-deformation
with special fiber $X_\ast = \bP(1,1,2,2)$ and general fiber $Q$ which arises from the concatenation of these two decompositions.
\end{example}

\section{Global Okounkov Bodies}
\label{sec:GlobalOB}

\noindent
Before stating the main theorem of this section, we give a short description of the pseudo-effective cone $\psEff(\TV(\fan))$ of a
rational projective complexity-one $T$-variety $\TV(\fan)$.

As in toric geometry, there is an exact sequence describing the divisor class group $\cl(\TV(\fan))$ of a complete rational
complexity-one $T$-variety $\TV(\fan)$. We denote by $\cP \subset \bP^1$ the set of points with non-trivial slices $\fan_P$. Then we
have that
\[\xymatrix{ 0 \ar[r] & (\bZ^\cP/\bZ)^\vee \oplus M  \ar[r]^{\iota} & \tdiv\big(\TV(\fan)\big) \ar[r]^{\textnormal{pr}} & \cl(\TV(\fan))
\ar[r] & 0 \,,} \]
where $\tdiv\big(\TV(\fan)\big) \cong (\bZ^{\cV \cup \cR})^\vee$, cf.\ (\ref{subsec:divCL}). Thus, we see that
\[\psEff(\TV(\fan)) = \textnormal{pr}(\bR^{\cV \cup \cR}_{\geq 0}) \subset \cl(\TV(\fan))_\bR\,.\]
By definition it is rational polyhedral. Indeed, after choosing a basis in every of these lattices, the maps $\iota$ and
$\textnormal{pr}$ become integer matrices.

\begin{theorem}
\label{thm:globalOB}
The global Okounkov body $\Delta_{Y_\bullet}(\TV(\fan))$ of a rational projective complexity-one $T$-variety $\TV(\fan)$ with respect
to a general or toric flag $Y_\bullet$ is a rational polyhedral cone.
\end{theorem}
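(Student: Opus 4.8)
The plan is to realize $\OB(\TV(\fan))$ as the region lying under the graph of an explicit concave, positively homogeneous, piecewise affine linear function defined over a rational polyhedral ``base'' cone, and then to read off rational polyhedrality from the resulting finite list of linear inequalities. The inputs are the local descriptions from Theorem~\ref{thm:localOBgeneral} and Propositions~\ref{prop:localOBtoricATwo}, \ref{prop:localOBtoricBTwo}, the toric statement Proposition~\ref{prop:obToricVar}(2) applied to $Y_1 = \TV(\Sigma)$, and the rational polyhedrality of $\psEff(\TV(\fan))$ established just before the theorem.

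First I would set up coordinates, writing a point of $\bR^d \times N^1(\TV(\fan))_\bR$ as $(x,w,\xi)$ with $x \in \bR$, $w \in \bR^{d-1}$, $\xi \in N^1(\TV(\fan))_\bR$. For every big class $\xi$ I would choose, \emph{linearly} in $\xi$, a normalized Cartier support function $h = h(\xi) \in \CaSF(\fan)_\bR$ representing it; the normalization ($h_Q|_{\cD^{\sigmafix}_Q}\equiv 0$ for a general flag, resp.\ $h|_{\cD^{\sigmafix}}\equiv 0$ for a toric flag) provides a rational linear section of the surjection $\CaSF(\fan)_\bR \to N^1(\TV(\fan))_\bR$ over the big cone. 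With this choice, Theorem~\ref{thm:localOBgeneral} identifies the fibre of $\OB(\TV(\fan))$ over a big $\xi$ with
\[
\OB(D_{h(\xi)}) = \big\{(x,w)\, \big|\; w \in \Delta_{Y_{\geq 1}}(D_{\ul h}),\ 0 \le x \le \deg \hstar(\phi_\bR^{-1}(w))\big\},
\]
and similarly in the toric cases up to the fixed rational change of Okounkov coordinates $w \mapsto (\langle\,\cdot\,,n_i\rangle)_i$ coming from $\deltafix$.

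Next I would describe the base. The condition $w \in \Delta_{Y_{\geq 1}}(D_{\ul h}) = \phi_\bR(\Box_{\ul h})$ depends only on the linear part $\ul h$, and by the definition of $\Box_{\ul h}$ it amounts to the finitely many inequalities $\langle \phi_\bR^{-1}(w), n_\rho\rangle \ge \ul h(n_\rho)$ over the ray generators $n_\rho$ of $\Sigma = \tail\fan$; this is exactly Proposition~\ref{prop:obToricVar}(2) for $Y_1 = \TV(\Sigma)$, pulled back along the rational linear ``linear part'' map $\xi \mapsto \ul h$. Since $w\mapsto \phi_\bR^{-1}(w)$ and $\xi \mapsto \ul h(n_\rho)$ are rational and linear, these are rational linear inequalities in $(w,\xi)$, and intersecting with the rational polyhedral cone $\psEff(\TV(\fan))$ yields a rational polyhedral base cone $\mathcal B \subset \bR^{d-1}\times N^1(\TV(\fan))_\bR$.

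The crux is the behaviour of the height. Because $\hstar_P(u) = \minvert(u-h_P) = \min_v\big(\langle u,v\rangle - h_P(v)\big)$ is a minimum, over the finitely many \emph{fixed} vertices $v$ of the slice $\fan_P$, of functions that are jointly linear in $(u,h)$, each $\hstar_P$ is concave and piecewise affine linear in $(u,h)$; hence so is $\deg\hstar = \sum_P \hstar_P$, and it is positively homogeneous of degree $1$. Composing with $w\mapsto u=\phi_\bR^{-1}(w)$ and $\xi\mapsto h(\xi)$ makes $g(w,\xi) := \deg\hstar(\phi_\bR^{-1}(w))$ a concave, homogeneous, rational piecewise linear function on $\mathcal B$, so $g = \min_j \ell_j$ for finitely many rational linear functionals $\ell_j$. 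Therefore
\[
\OB(\TV(\fan)) = \big\{(x,w,\xi)\,\big|\; (w,\xi)\in\mathcal B,\ x \ge 0,\ x \le \ell_j(w,\xi)\ \forall j\big\}
\]
is a finite intersection of rational half-spaces with $\mathcal B$, hence a rational polyhedral cone (in the toric-flag cases the lower bound reads $x \ge -\hstar_{P}(u) = \max_v(h_P(v)-\langle u,v\rangle)$ and the upper bound $x \le \sum_{P'\neq P}\hstar_{P'}(u)$, both again finite intersections of rational half-spaces). As this closed rational polyhedral cone has the correct fibre $\OB(\xi)$ over every big class, and big classes are dense in $\psEff(\TV(\fan))$, it agrees with the global Okounkov body. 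The main obstacle is precisely this height estimate: one must know that $\deg\hstar$ is not merely piecewise affine linear and homogeneous but \emph{jointly concave} in the weight and in $h$, since that is what presents the graph region as a finite intersection of half-spaces rather than as a finite union of cones whose convexity would otherwise have to be argued by hand; this concavity falls out of the ``$\min$ over vertices'' description, and the remaining bookkeeping (the linear normalization section and the rational coordinate change) is routine.
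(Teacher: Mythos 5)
Your proof is correct, but at the decisive step it takes a genuinely different route from the paper's. The paper also fibres $\OB(\TV(\fan))$ over a base cone (your $\mathcal{B}$, its $E$) and views it as the region under the graph of $(u,\xi)\mapsto \deg \xi^*(u)$; however, it only proves that this function is concave and piecewise affine linear \emph{along each compact line segment}, with finitely many breaks, and it must then invoke a separate compactness result --- Lemma \ref{lem:fiberedPolytopes}, applied to a rational polytopal cross-section of $\psEff(\TV(\fan))$ --- to globalize segment-wise polyhedrality to polyhedrality of the cross-section. You bypass that lemma entirely: since all elements of $\CaSF(\fan)$ are piecewise affine with respect to the \emph{same} fixed subdivisions $\fan_P$, evaluation $h \mapsto h_P(v)$ at a fixed vertex is linear in $h$, whence
\[
\deg \hstar(u) \;=\; \sum_{P\in\cP}\,\min_{v \in \fan_P(0)} \big(\langle u,v\rangle - h_P(v)\big)
\;=\; \min_{(v_P)_P \,\in\, \prod_{P\in\cP} \fan_P(0)} \Big(\big\langle u,\textstyle\sum_P v_P\big\rangle - \textstyle\sum_P h_P(v_P)\Big)
\]
is globally a minimum of finitely many functionals that are jointly linear in $(u,h)$; composing with $\phi_\bR^{-1}$ and a rational linear normalized section $\xi \mapsto h(\xi)$ then exhibits the global body as a finite intersection of rational half-spaces over the rational polyhedral base. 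This buys three things the paper's route does not: Lemma \ref{lem:fiberedPolytopes} and its delicate isolated-vertex argument become superfluous; rationality comes for free rather than having to be tracked through the cross-section; and you obtain (semi-)explicit defining inequalities, which the paper's closing remark concedes its proof does not produce. A further small advantage: your base $\mathcal{B}$ is described directly by the ray inequalities $\langle \phi_\bR^{-1}(w), n_\rho\rangle \geq \ul{h}(n_\rho)$, which is elementary and valid for any (possibly singular) tailfan, whereas the citation of Proposition \ref{prop:obToricVar}(2) --- and the paper's own realization of $\ovl{E}$ as a pull-back of the global Okounkov body of $\TV(\tail\fan)$ --- presupposes smoothness that the tailfan need not have; only a smooth cone $\sigmafix$ is guaranteed.

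Two points of hygiene. First, the normalized representative of a class is \emph{not} unique: for a general flag at $Q$, adding $\SF(\divisor(f))$ with $\ord_Q(f)=0$ preserves normalization, so ``the normalization provides a section'' is not literally correct (the paper's remark before its proof makes the same slip). This is harmless for your argument: the normalized support functions form a rational linear subspace of $\CaSF(\fan)_\bQ$ surjecting onto $N^1(\TV(\fan))_\bQ$, so a rational linear normalized section exists, which is all you use; moreover $\ul{h}$ and $\deg\hstar$ (though not the individual $\hstar_P$) are independent of the choice. Second, your closing step --- correct fibres over big rational classes plus density forces equality with the global Okounkov body --- should be justified by the standard relative-interior argument for closed convex cones with full-dimensional projection into $\psEff(\TV(\fan))$; this is the same implicit step the paper takes when it asserts that $\OB(\TV(\fan))$ ``is equal to the closure of'' its graph description, so it is not a gap relative to the paper, but it deserves a sentence.
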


We postpone the proof until (\ref{subsec:mainResult}).

\subsection{A Lemma on Polyhedra}
\label{subsec:polLemma}
Let $v \in \bR^k$ be a point, $b = (b_1,\dots,b_k)$ an orthonormal basis of $\bR^k$, and $\lambda = (\lambda_1,\dots,\lambda_k) \in
\bR^k_{>0}$. Using these data, we construct a piecewise linear object $T(v,b,\lambda) \subset \bR^k$ in the following way:
\[ \begin{array}{ccl} T_1 & = & \{v + \kappa b_1 \;|\; 0\leq \kappa \leq \lambda_1\}\, \\[1mm]
T_i & = & T_{i-1} \cup \{v + \sum_{j=1}^{i-1} \frac{\lambda_j}{2} b_j + \kappa b_i \;|\; 0 \leq \kappa \leq \lambda_i\} \,. \end{array}\]
Finally, we arrive at $T_k =: T(v,b,\lambda)$. It is now not hard to see that a concave function $f: \bR^k \to \bR$ which is affine
linear on $T(v,b,\lambda)$ is also affine linear on the convex hull $\conv T(v,b,\lambda)$.

\begin{definition}
Consider a set $T := T(v,b,\lambda) \subset \bR^k$ as constructed above, and define
\[\bS^{k-1}_T := \frac{1}{\epsilon} \big( \conv T \cap  \bS^{k-1}(v,\epsilon) \big)\,,\]
where $\bS^{k-1}(v,\epsilon)$ denotes the sphere of radius $0 < \epsilon \ll 1$ around $v \in \bR^k$. The sphere is supposed to be
small enough such that no other extremal point of $\conv T$ apart from $v$ is contained in the ball $B(v,\epsilon)$.
\end{definition}

\begin{lemma}
\label{lem:fiberedPolytopes}
Let $P \subset \bR^k$ be a $k$-dimensional polytope and $f: P \to \bR_{\geq 0}$ a non-negative concave function. Assume that the set 
\[Q_S := \{(x,y)\;|\; x \in S, \, 0 \leq y \leq f(x)\} \subset \bR^{k+1}\] 
over any line segment $S \subset P$ is a polytope. Then 
\[Q := \{(x,y)\;|\; x \in P, \, 0 \leq y \leq f(x) \} \subset \bR^{k+1}\]
is also a polytope.
\end{lemma}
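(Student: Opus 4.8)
The plan is to prove that $Q$ is a compact convex set which is locally polyhedral at each of its points, and then to use compactness of $\partial Q$ to conclude that $Q$ is the intersection of finitely many halfspaces, hence (being bounded) a polytope. First I would record the easy structural facts. Since $f$ is concave its hypograph is convex, and intersecting with $\{y \geq 0\}$ preserves convexity; $P$ is compact and a concave function on a polytope is bounded, so $Q$ is bounded; and the hypothesis that $Q_S$ is a polytope for every segment $S$ is equivalent (using $f \geq 0$ and compactness of $S$) to the statement that $f$ restricted to every segment is piecewise affine linear with finitely many pieces, which in particular forces $f$ to be continuous and $Q$ to be closed. The pieces of $\partial Q$ lying over $\{y = 0\}$ and over the facets of the polytope $P$ are automatically polyhedral, so the entire problem collapses to the upper boundary: it suffices to show that $f$ is piecewise affine linear on $P$ with only finitely many linearity regions.

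Next I would localize the question. Fix $v \in P$ and examine the directions $d \in \bS^{k-1}$ emanating from $v$ (restricted to the tangent cone of $P$ at $v$ when $v \in \partial P$). By the reformulated hypothesis, along the segment from $v$ in direction $d$ the function $f$ coincides with a single affine function, its \emph{germ}, on an initial interval. The role of the construction $T(v,b,\lambda)$ is exactly to promote this one-dimensional data to full-dimensional data: choosing $k$ independent directions $b_1,\dots,b_k$ and lengths $\lambda_i$ so small that $f$ is affine along the whole staircase $T(v,b,\lambda)$, the concavity fact recorded before the Definition yields that $f$ is affine on the full-dimensional set $\conv T(v,b,\lambda)$. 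Consequently one affine germ governs $f$ along every direction of the spherical patch $\bS^{k-1}_T$. I would then cover $\bS^{k-1}$ by finitely many such patches; each patch is a full-dimensional spherical region carrying a single germ, and compactness of $\bS^{k-1}$ gives a finite subcover. Hence only finitely many distinct germs occur at $v$, so $f$ is piecewise affine linear with finitely many pieces on a neighbourhood of $v$, i.e. $Q$ is locally polyhedral over $v$.

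Finally, compactness of $P$ upgrades the local statement to a global one: finitely many such neighbourhoods cover $P$, so $f$ has only finitely many linearity regions in total, the graph of $f$ contributes finitely many facets, and $Q$ is cut out by the finitely many supporting halfspaces collected from its bottom, its sides, and its top. Being bounded, $Q$ is a polytope.

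The hard part will be the covering step. To obtain a genuine finite \emph{open} cover of $\bS^{k-1}$ by single-germ patches $\bS^{k-1}_T$, I must know that every direction $d$ lies in the interior of some $\bS^{k-1}_T$ on which $f$ is affine; equivalently, that the contact set $\{f = \ell_d\}$ of each germ is full-dimensional and that the linearity regions neither accumulate angularly nor toward a point of $P$. This is precisely where the hypothesis is used in full strength: if the germs accumulated, one could choose a single segment through $v$ meeting infinitely many of them, contradicting that $Q_S$ is a polytope, while the staircase lemma guarantees that wherever $f$ is affine in $k$ independent directions it is affine on an entire neighbourhood. Making this dichotomy precise — ruling out thin, non-covering germs by combining concavity with the one-dimensional polytopality — is the crux; the surrounding bookkeeping (local polyhedrality over $\partial P$ and the assembly of halfspaces) is then routine.
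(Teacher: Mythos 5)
Your plan is structurally the paper's own proof: the same three ingredients appear in the same roles, namely the staircase $T(v,b,\lambda)$, the observation that concavity promotes affineness on $T(v,b,\lambda)$ to affineness on the full-dimensional set $\conv T(v,b,\lambda)$, and compactness of the set of directions emanating from a point $v$. The only difference is bookkeeping. The paper does not speak of germs or linearity regions; it calls $v\in P$ a \emph{vertex} when $(v,f(v))$ is an extremal point of $Q_S$ for every segment $S\subset P$ through $v$, notes that $Q=\conv\big(V\cup(P\times\{0\})\big)$ where $V$ is the set of such graph points, and then proves that vertices are \emph{isolated} by covering the compact direction set $L_v\subset\bS^{k-1}$ with the patches $\bS^{k-1}_{T(l)}$. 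Since an isolated subset of the compact $P$ is finite, the paper gets finiteness in one stroke and does not need your second globalization step (covering $P$ by finitely many neighbourhoods to bound the total number of linearity regions). Either organization would be fine.

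However, as written your proposal is not a proof, and the step you defer is a genuine gap. The finite covering of the direction sphere cannot be extracted from compactness alone: the single-germ patches $\bS^{k-1}_T$ are \emph{closed} sets, and the initial direction $l_1$ of the staircase is an edge direction of $\conv T$, so a direction $l$ need only lie on the relative boundary of the patch built from it, not in its interior. A compact set covered by closed sets, where each point may sit on the boundary of the set assigned to it, can fail to have a finite subcover; for instance $[0,1]$ covered by the intervals $[x,(1+x)/2]$ for $x\in[0,1)$ together with $\{1\}$ has none. So one really must rule out angular accumulation of germs at $v$ --- e.g.\ by proving that your $r_v$-type function is bounded below on $L_v$, or that every $l$ is interior to \emph{some} patch --- and this is exactly what you flag as ``the crux'' and do not do. You should also know that the paper's own write-up has the identical soft spot: it covers $L_v$ by the closed patches $\bS^{k-1}_{T(l)}$ and appeals to compactness of $L_v$ without addressing that $l$ need not be interior to $\bS^{k-1}_{T(l)}$. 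In that sense you have faithfully reconstructed the published argument, including its weakest step; but since the task was to prove the lemma, deferring that step leaves your proposal incomplete.
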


Crucial input in the following proof was provided by Christian Haase.

\begin{proof}
An element $v$ of $P$ is called a \emph{vertex} if and only if $(v,f(v))$ is an extremal point of $Q_S$ for all line segments
$S \subset P$ containing $v$. Then $Q$ will be the convex hull of 
\[V := \{(v,f(v))\,|\; v \textnormal { vertex} \} \cup (P \times \{0\})\,,\]
since $Q$ is the convex hull of its extremal points which are, by definition, all contained in $V$. To see this, recall that a point
of a convex set is called extremal if it does not lie in the middle of a compact line segment contained within this set.

We are left to show that the set of vertices is isolated in $Q$. So let $v \in P$ be a vertex, and denote by $L_v \subset \bS^{k-1}$ the
compact subset of directions from $v$ which see other points of $P$, and define $S_l : = P \cap \{v + \kappa l \;|\; 0 \leq \kappa < \infty \}$
for $l \in L_v$. Next we set
\[r_v(l) := \min \{\alpha >0 \;|\; (v+\alpha l,f(v + \alpha l)) \neq (v,f(v)) \textnormal{ is an extremal point of } Q_{S_l} \}\,. \]
Showing that there is an $\epsilon >0$ such that $r_v(l) \geq \epsilon$ for all $l \in L_v$ will complete the proof, since vertices
must appear as extremal points on some $Q_{S_l}$. So let us consider a direction $l_1 \in L_v$, and set $\lambda_1 = r_v(l)$. From
$v_1 := v + \frac{\lambda_1}{2} l$ we can proceed along a direction $l_2 \in (l_1)^\perp$ to set $\lambda_2 := r_{v_1}(l_2)$, and
$v_2 = v_1 + \frac{\lambda_2}{2}l_2$. Again, we can walk along a direction $l_3 \in (\bangle{l_1}+\bangle{l_2})^\perp$ with
$\lambda_3 = r_{v_2}(l_3)$. Iterating this procedure, we finally obtain an object $T(l) = T(v,\lambda(l))$ on which $f$ will be affine
linear. So by the discussion from above it will be affine linear on the whole $k$-dimensional polytope $\conv T(l)$. 

Doing this for all elements $l \in L_v$ gives us an infinite covering of $L_v$ by $\bS^{k-1}_{T(l)}$. Since $L_v$ is compact we can
choose a finite number of these to provide a covering. Therefore we can also find an $\epsilon >0$ such that there is no vertex
$v' \in P$ with $d\big((v,f(v)),(v',f(v'))\big) < \epsilon$.
\end{proof}

\subsection{Proof of Theorem \ref{thm:globalOB}}
\label{subsec:mainResult}

\begin{remark}
Before going into the details, we would like to make some identifications.

Elements of the rational pseudo-effective cone $\psEff(\TV(\fan))_\bQ := \psEff(\TV(\fan)) \cap N^1(\TV(\fan))_\bQ$ will be denoted
by $\xi$. Having fixed a general or toric flag $Y_\bullet$ in $\TV(\fan)$, the rational polytopes $\Box_{\ul{\xi}}$ and $W(\xi)$ are
defined as $\Box_{\ul{h}}$ and $W(h)$, respectively, for the unique normalized $\bQ$-Cartier divisor $D_h$ with $[D_h] = \xi$. In
the same vein, for every $P \in \bP^1$ we define the map $\xi^*_P: \Box_{\ul{\xi}} \to \bQ$ as the map $\hstar_P: \Box_{\ul{h}} \to \bQ$.
Apart from these identifications we will also make use of the following linear map
\[\gamma: N^1(\TV(\fan))_\bQ \to N^1(\TV(\tail \fan))_\bQ, \quad \xi \mapsto \gamma(\xi) = \ul{\xi}\,,\]
whose image of $\psEff(\TV(\fan))_\bQ$ lies inside $\psEff(\TV(\tail \fan))_\bQ$.
\end{remark}

Let us now proceed to the proof.\\[1ex]

\noindent
Let $Y_\bullet$ be a general flag and denote by $E \subset \bQ^{d-1} \oplus N^1(\TV(\fan))_\bQ$ the cone over $\psEff(\TV(\fan))_\bQ$
with fiber $\phi_\bQ(\Box_{\ul{\xi}})$. Its closure $\ovl{E}$ in $\bR^{d-1} \oplus N^1(\TV(\fan))$ is rational polyhedral, since
$\psEff(\TV(\fan))$ is rational polyhedral ($\TV(\fan)$ is a Mori dream space) and $\ovl{E}$ arises as the pull-back of the global
Okounkov body $\Delta_{Y_\geq 1}\big(\TV(\tail \fan))\big)$ along $\id_{\bR^{d-1}} \oplus \gamma_\bR$. By forgetting the first
coordinate, we get a projection 
\[p: \Delta_{Y_\bullet}(\TV(\fan)) \lra \bR^{d-1} \oplus N^1(\TV(\fan))_\bR \] 
with image exactly equal to $\ovl{E}$. Moreover, one can reconstruct $\OB(\TV(\fan))$ from $E$ by considering the graph of $\hstar$.
Namely, we have that  $\OB(\TV(\fan))$ is equal to the closure of
\[\bigg\{(x,u,\xi) \in \bQ \oplus \bQ^{d-1} \oplus N^1(\TV(\fan))_\bQ \,\bigg| \begin{array}{l}  (u,\xi) \in E \subset \bQ^{d-1} \oplus
N^1(\TV(\fan))_\bQ \,, \\[1mm] 0 \leq x \leq \sum_{P \in \cP} \xi^*_P(u) \end{array} \bigg\}\]
inside $\bR \oplus \bR^{d-1} \oplus N^1(\TV(\fan))_\bR$. Observe that the map
\[\mathbf{h}_{\psEff}: \bQ^{d-1} \oplus N^1(\TV(\fan))_\bQ \supset E \lra \bQ \,, \; (u,\xi) \mapsto \sum_{P \in \bP^1} \xi^*_P(u) = 
\deg \xi^*(u)\]
is concave and linear on rays, i.e.\ 
\[\mathbf{h}_{\psEff}(\lambda \cdot (u,\xi)) = \lambda \cdot \mathbf{h}_{\psEff}(u,\xi)\,, \quad \lambda \geq 0.\]
We claim that this map varies piecewise affine linearly along any compact line segment 
\[S(c_1,c_2) = \{\lambda c_1 + (1-\lambda)c_2 \;|\; 0 \leq \lambda \leq 1\} \subset E\] 
between two distinct points $c_1 = (u_1,\xi_1), \; c_2 = (u_2,\xi_2) \in E \subset \bQ^{d-1} \oplus N^1(\TV(\fan))_\bQ$ with only a
finite number of breaks in the linear structure. Note that it is enough to check this for a single summand 
\[(\lambda\xi_1 + (1-\lambda) \xi_2)^*_P(\lambda u_1 + (1-\lambda) u_2)\]
for an arbitrary but fixed point $P \in \bP^1$. Recall that
\[\hstar_P(u) = \min \{u(v)-h_P(v) \lst v \in \fan_P(0)\}.\]
where $u-h_P$ is a piecewise affine linear function on $N_\bQ$ and $\fan_P(0)$ is a finite set.
Note that there exists a real number $\epsilon > 0$ such that, for $0 \leq \lambda,\mu \leq 1$, the functions $\lambda u_1 +
(1-\lambda)u_2 - (\lambda (\xi_1)_P + (1-\lambda)(\xi_2)_P)$ and $\mu u_1 + (1-\mu)u_2 - (\mu (\xi_1)_P + (1-\mu)(\xi_2)_P)$ attain
their minimum at the same vertex $v \in \fan_P(0)$ whenever $|\lambda - \mu| < \epsilon$. Hence, we can partition the line segment
$S(c_1,c_2)$ into a finite number of segments along which $(\lambda\xi_1 + (1-\lambda) \xi_2)^*_P(\lambda u_1 + (1-\lambda) u_2)$ is
in fact affine linear. Taking a rational polytopal cross section of the cone $\psEff(\TV(\fan))$ and applying Lemma
\ref{lem:fiberedPolytopes} then shows that a cross section of $\Delta_{Y_\bullet}(\TV(\fan))$ is a rational polytope. Since
$\Delta_{Y_\bullet}(\TV(\fan))$ arises as the cone over this rational polytopal cross section it has to be rational polyhedral, too.
\\[.5ex]
Finally, let $Y_\bullet$ be a toric flag and denote by $E \subset \bQ^{d-1} \oplus N^1(\TV(\fan))_\bQ$ the cone over
$\psEff(\TV(\fan))_\bQ$ with fiber $W(\xi)$ for $\xi \in \psEff(\TV(\fan))_\bQ$. We only have to show that the closure
$\ovl{E} \subset \bR^{d-1} \oplus N^1(\TV(\fan))_\bR$ is rational polyhedral since $\OB(\TV(\fan))$ arises as a translation of
$\ovl{E}$ induced by the primitive generators of the rays of $\deltafix$, cf.\ Propositions \ref{prop:localOBtoricATwo} and
\ref{prop:localOBtoricBTwo}. But this claim follows easily from the explicit description of the rational polytope $W(h)$ and
the general arguments concerning the piecewise affine structure of ``\;$\cdot^*_P$\;'' as a function in $\xi$ we have given above
in the first part of the proof.

\begin{remark}
Theorem \ref{thm:globalOB} shows that the global Okounkov body of a rational projective complexity-one $T$-variety $\TV(\fan)$ is
determined by the global Okounkov body of the general fiber $\TV(\tail \fan)$ and the function ``\;${}^*$\;'' which maps an element
$\xi \in N^1(\TV(\fan))_\bQ$ to $\xi^*$.

Our result generalizes Theorem 5.2 from \cite{obToricVB} which states that the global Okounkov body of a rank two toric vector bundle
on a smooth projective toric variety with respect to a toric flag of type $\mathbf{T_2}$ (cf.\ Remark \ref{rmk:flagsGonzalez}) is
rational polyhedral. However, Theorem \ref{thm:globalOB} does not give us any explicit equations for $\OB(\TV(\fan))$ as they were
obtained in loc.\ cit.
\end{remark}

\bibliographystyle{alpha}
\bibliography{okounkovBody}

\end{document}